\documentclass[12pt,a4paper]{article}

\usepackage[T1]{fontenc}
\usepackage{lmodern}
\usepackage[a4paper,margin=25mm]{geometry}
\usepackage{microtype}
\usepackage{amsmath,amssymb,amsthm,mathtools}
\usepackage{array}
\usepackage[colorlinks,linkcolor=blue,citecolor=blue,urlcolor=blue]{hyperref}
\usepackage{bookmark}
\usepackage{aliascnt}
\usepackage[nameinlink,noabbrev]{cleveref}

\allowdisplaybreaks
\numberwithin{equation}{section}

\theoremstyle{plain}
\newtheorem{theorem}{Theorem}[section]

\newaliascnt{lemma}{theorem}
\newtheorem{lemma}[lemma]{Lemma}
\aliascntresetthe{lemma}

\newaliascnt{proposition}{theorem}
\newtheorem{proposition}[proposition]{Proposition}
\aliascntresetthe{proposition}

\newaliascnt{corollary}{theorem}

\aliascntresetthe{corollary}

\theoremstyle{definition}
\newaliascnt{definition}{theorem}

\aliascntresetthe{definition}

\newaliascnt{example}{theorem}

\aliascntresetthe{example}

\theoremstyle{remark}
\newaliascnt{remark}{theorem}
\newtheorem{remark}[remark]{Remark}
\aliascntresetthe{remark}

\crefname{theorem}{theorem}{theorems}
\crefname{lemma}{lemma}{lemmas}
\crefname{proposition}{proposition}{propositions}
\crefname{corollary}{corollary}{corollaries}
\crefname{definition}{definition}{definitions}
\crefname{example}{example}{examples}
\crefname{remark}{remark}{remarks}

\newcommand{\R}{\mathbb R}
\newcommand{\Tcal}{\mathcal T}
\newcommand{\Jcal}{\mathcal J}
\newcommand{\dd}{\,\mathrm d}
\DeclareMathOperator{\Res}{Res}

\title{Real-rootedness of the $Z$-polynomials of sparse paving matroids
and strict interlacing for uniform matroids}
\author{Matthew H. Y. Xie\textsuperscript{1} and
Philip B. Zhang\textsuperscript{2}\\[6pt]
  \small \textsuperscript{1}School of Mathematical Sciences,\\
  \small Tianjin University of Technology, Tianjin 300384, P. R. China\\[4pt]
  \small \textsuperscript{2}College of Mathematical Sciences\\
  \small \& Institute of Mathematics and Interdisciplinary Sciences,\\
  \small Tianjin Normal University, Tianjin 300387, P. R. China\\[6pt]
  \small Email: \textsuperscript{1}\href{mailto:xie@email.tjut.edu.cn}{\texttt{xie@email.tjut.edu.cn}},
  \textsuperscript{2}\href{mailto:zhang@tjnu.edu.cn}{\texttt{zhang@tjnu.edu.cn}}}
\date{}

\hypersetup{
pdfauthor={Matthew H. Y. Xie and Philip B. Zhang},
pdftitle={Real-rootedness of the Z-polynomials of sparse paving matroids and strict interlacing for uniform matroids},
pdfsubject={Real-rootedness and interlacing of Z-polynomials and gamma-polynomials of matroids},
pdfkeywords={sparse paving matroid, Z-polynomial, real-rootedness, Jensen polynomial, Euler operator, strict interlacing}
}

\begin{document}
\maketitle

\begin{abstract}
We prove that the $Z$-polynomial of every sparse paving matroid has only
negative real zeros, confirming the real-rootedness conjecture of Proudfoot,
Xu, and Young for this class. We also prove that, for each fixed positive
corank, the $Z$-polynomials of uniform matroids in consecutive ranks strictly
interlace. Our approach is to prove real-rootedness of the corresponding
$\gamma$-polynomials for sparse paving matroids and their strict interlacing
for uniform matroids, and then transfer both properties to the corresponding
$Z$-polynomials.
\end{abstract}

\noindent\textbf{2020 Mathematics Subject Classification.}
05B35, 26C10, 33C45.

\smallskip
\noindent\textbf{Keywords.}
Sparse paving matroid, $Z$-polynomial, real-rootedness, interlacing.

\section{Introduction}
Let $M$ be a matroid with ground set $E(M)$ and lattice of flats $L(M)$.
Elias, Proudfoot, and Wakefield~\cite{EPW2016} introduced the Kazhdan--Lusztig
polynomial $P_M(x)$. Proudfoot, Xu, and Young~\cite{PXY2018} subsequently
defined the $Z$-polynomial by
\(
Z_M(x)=\sum_{F\in L(M)}x^{\operatorname{rk}F}P_{M/F}(x),
\)
and proved that it is palindromic of degree $\operatorname{rk}M$. They also
conjectured that all zeros of $Z_M(x)$ are real and negative and gave a
cohomological interpretation of $Z_M(x)$ for realizable matroids. By
developing a singular Hodge theory for arbitrary matroids, Braden, Huh,
Matherne, Proudfoot, and Wang~\cite{BHM2026} proved that $P_M(x)$ has
nonnegative coefficients. The defining sum above therefore shows that
$Z_M(x)$ also has nonnegative coefficients.

Recently, Cheng and Liu~\cite{ChengLiu2026} constructed representable matroids
with nonunimodal Kazhdan--Lusztig polynomials by deleting points from finite
projective geometries, disproving the general log-concavity and real-rootedness
conjectures for $P_M(x)$. They further showed that, under a condition relating
the degree to half the rank, such a deletion has the same $Z$-polynomial as the
ambient projective geometry. Since Proudfoot, Xu, and
Young~\cite{PXY2018} proved that this polynomial is real-rooted, these examples
do not contradict the real-rootedness conjecture for $Z_M(x)$.

We study the real-rootedness of $Z_M(x)$ through its $\gamma$-polynomial.
Write $d=\operatorname{rk}M$. Since $Z_M(x)$ is palindromic of degree $d$,
there is a unique polynomial $\Gamma_M(x)$ of degree at most
$\lfloor d/2\rfloor$ such that
\begin{equation}\label{eq:intro-gamma-transform}
Z_M(x)
=
(1+x)^d
\Gamma_M\!\left(\frac{x}{(1+x)^2}\right).
\end{equation}
Following Ferroni, Nasr, and Vecchi~\cite{FNV2023}, we call
$\Gamma_M(x)$ the $\gamma$-polynomial of $M$. They proved that every sparse
paving matroid is $\gamma$-positive, meaning that $\Gamma_M(x)$ has
nonnegative coefficients. Ferroni, Matherne, Stevens, and
Vecchi~\cite{FMSV2024} later extended $\gamma$-positivity to all matroids.
This strengthens the unimodality theorem of Braden, Huh, Matherne,
Proudfoot, and Wang~\cite{BHM2026}.

Before the present work, real-rootedness of $Z_M(x)$ had been established for
several special classes and bounded parameter ranges. Proudfoot, Xu, and
Young~\cite[Proposition~5.5]{PXY2018} proved real-rootedness, together with
strict interlacing, for the nice family associated with finite projective
geometries. For uniform matroids, the case of corank one is the classical
Narayana case, and Gao, Lu, Xie, Yang, and Zhang~\cite{GaoEtAl2021} proved
real-rootedness when the corank is between $2$ and $15$. Lu, Xie, and
Yang~\cite{LXY2022} proved real-rootedness for the $Z$-polynomials of fan
matroids, wheel matroids, and whirl matroids. Ferroni and
Vecchi~\cite{FerroniVecchi2022} verified real-rootedness for every sparse
paving matroid on at most $30$ elements. Recently, Zhang~\cite{Zhang2026}
proved that, for $n\ge2$, the common $Z$-polynomial of the graphic matroids of
$K_{1,1,n}$ and $K_{2,n}$ has $n+1$ distinct negative zeros.

A particularly natural class to consider is that of sparse paving matroids.
This class contains all uniform matroids, and it is conjectured that
asymptotically almost all matroids are sparse
paving~\cite{MayhewEtAl2011,PendavinghVanderPol2015}. A matroid is paving if
every circuit has
cardinality at least its rank, and it is sparse paving if both the matroid and
its dual are paving. A circuit-hyperplane is a circuit that is also a
hyperplane.
Our first main result proves that the real-rootedness conjecture for
$Z$-polynomials holds for every sparse paving matroid.

\begin{theorem}\label{thm:main}
Let $M$ be a sparse paving matroid. Then both $\Gamma_M(x)$ and $Z_M(x)$
have only negative real zeros.
\end{theorem}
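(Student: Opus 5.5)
The plan has three steps: show $\Gamma_M$ has only negative real zeros, check that $\gamma$-real-rootedness transfers to $Z_M$ through \eqref{eq:intro-gamma-transform}, and then handle $\Gamma_M$ for sparse paving $M$ via a uniform-matroid term plus a correction per circuit-hyperplane.

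\textbf{Transfer step.} Suppose $\Gamma_M(x)=c\prod_j(x+a_j)$ with all $a_j>0$. Then
\[
Z_M(x)=c\,(1+x)^{d-2k}\prod_j\bigl(x+a_j(1+x)^2\bigr),
\]
where $k=\deg\Gamma_M$. The factor $a_j x^2+(2a_j+1)x+a_j$ has discriminant $4a_j+1>0$. Its roots are negative, since the coefficients are positive. So the real-rootedness of $Z_M$ follows at once from that of $\Gamma_M$.

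\textbf{Decomposition for sparse paving matroids.} Let $M$ have rank $d$, $n$ elements, and $\lambda$ circuit-hyperplanes. By the results of Ferroni--Nasr--Vecchi,
\[
Z_M=Z_{U_{d,n}}-\lambda\,\zeta_d,
\]
where $\zeta_d$ is an explicit correction depending only on $d$. This is because relaxing a circuit-hyperplane changes $P_M$ and $Z_M$ by universal terms. Transformed by \eqref{eq:intro-gamma-transform}, this gives
\[
\Gamma_M=\Gamma_{U_{d,n}}-\lambda\,\delta_d
\]
for an explicit polynomial $\delta_d$, which I expect to be a Catalan/Narayana-type polynomial of degree at most $\lfloor (d-1)/2\rfloor$ with positive coefficients. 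The number $\lambda$ is bounded by the maximal number of circuit-hyperplanes, roughly $\binom{n}{d}/n$ from a packing bound. $\gamma$-positivity of all sparse paving matroids then says the coefficients stay nonnegative throughout the admissible range of $\lambda$.

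\textbf{Main step.} The goal is an interlacing statement: the zeros of $\delta_d$ interlace those of $\Gamma_{U_{d,n}}$ in the right direction, that is, $\delta_d$ has one fewer zero and its zeros lie between consecutive zeros of $\Gamma_{U_{d,n}}$. If this holds, every combination $\Gamma_{U_{d,n}}-\lambda\delta_d$ with $\lambda\ge 0$ is real-rooted by the standard Obreschkoff/Hermite--Kakeya argument.

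Negativity of the zeros then needs two checks. The constant term of $\Gamma_M$ is $1$. The leading behaviour must not change sign in the admissible range of $\lambda$, and $\gamma$-positivity guarantees this. With nonnegative coefficients and real zeros, all zeros are nonpositive, and they are nonzero because the constant term is nonzero.

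To establish the interlacing I would find closed forms. $\Gamma_{U_{d,n}}$ should be expressible through a Jensen-type polynomial or a hypergeometric sum in $x$, obtained by applying an Euler-type operator to a simpler real-rooted polynomial. $\delta_d$ should be similarly expressible, perhaps as the $\gamma$-polynomial of a smaller uniform matroid or its derivative. I would then prove the interlacing by showing both polynomials arise from a common real-rooted family. Concretely, they would be consecutive members of a sequence satisfying a three-term recurrence with positive coefficients, or obtained by applying the same multiplier-sequence operator to an interlacing pair.

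This interlacing is the hard part. Proving that the $\gamma$-polynomials of uniform matroids in consecutive ranks, or the pair $(\Gamma_{U_{d,n}},\delta_d)$, interlace likely requires an integral or Jensen-polynomial representation and a careful sign-variation or Laguerre-type argument. I would first try to realize $\Gamma_{U_{d,n}}$ as the Jensen polynomial of a Laguerre--P\'olya function, since Jensen polynomials of such functions are real-rooted and consecutive ones interlace.
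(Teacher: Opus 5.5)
\textbf{There is a genuine gap.} Your transfer step is correct. The decomposition $\Gamma_M=\Gamma_U-\lambda\delta_d$ is also correct, with $\delta_d(x)=\sum_{i\ge1}\frac{2}{i+1}\binom{d-1}{i-1}\binom{d-i}{i}x^i$. Note that $\delta_d$ has degree $\lfloor d/2\rfloor$ and a zero at the origin, not degree at most $\lfloor (d-1)/2\rfloor$.

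The main step fails: $\Gamma_U$ and $\delta_d$ do \emph{not} interlace in general, so the Hermite--Kakeya--Obreschkoff route cannot work. Take rank $4$ on $14$ elements (corank $10$). By \eqref{eq:uniform-coeff}, or by counting flats, $\Gamma_U(x)=1+360x+275x^2$, while $\delta_4(x)=3x+2x^2$. We have $\Gamma_U(-3/2)=79.75>0$ and $\Gamma_U(0)=1>0$, the vertex $-36/55$ lies in $(-3/2,0)$, and the discriminant is positive. Hence both zeros of $\Gamma_U$, approximately $-1.306$ and $-0.0028$, lie strictly between the zeros $-3/2$ and $0$ of $\delta_4$.

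Consistently with \Cref{thm:HKO}, the pencil contains members that are not real-rooted. For instance $\lambda=119\ge0$ gives $1+3x+37x^2$, which has nonnegative coefficients and negative discriminant. So neither interlacing nor $\gamma$-positivity can close the argument. The packing bound must be used quantitatively; here $\Lambda_{10,4}=91$ excludes $\lambda=119$. HKO is an all-or-nothing statement about the whole span, so it cannot single out the admissible segment $[0,\Lambda_{m,d}]$ of the pencil.

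The paper avoids pencils in $\lambda$ altogether. It writes $\Gamma_M=N_{m,d,\lambda}(\vartheta)H_{m,d}(x)$, where $H_{m,d}$ has distinct negative zeros via Jacobi polynomials. It then shows that $N_{m,d,\lambda}(\vartheta)$ preserves real-rootedness in degree at most $q=\lfloor d/2\rfloor$:
\begin{itemize}
\item By Craven--Csordas, it suffices that $\Jcal_q[N_{m,d,\lambda}]$ be real-rooted with zeros of one sign.
\item By Jensen's theorem, this follows once the Poisson transform $T_{m,d,\lambda}$ is real-rooted. Then $e^xT_{m,d,\lambda}$ is a Laguerre--P\'olya function, which is where your last idea should be aimed.
\item For $d\ge6$, real-rootedness of $T_{m,d,\lambda}$ comes from counting zeros via the residues of $T_{m,d,\lambda}/\Tcal[(y)_m]$ at the points $-x_i$, where the $x_i$ are the zeros of $L^{(1)}_{m-1}$.
\item This is exactly where $\lambda\le\Lambda_{m,d}$ enters. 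It keeps the leftmost residue positive and supplies a point of $(-x_1,0)$ where the quotient is positive.
\item The cases $d=4,5$ are settled by exhibiting a point where $\Gamma_{m,d,\lambda}<0$, again using the packing bound.
\end{itemize}
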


The proof reduces the problem to the real-rootedness of the Poisson transform
of an auxiliary polynomial. Using a Laguerre partial-fraction expansion, we
control its poles and residues and thereby locate all of its zeros on the real
axis. Jensen's theorem and the finite $n$-sequence criterion then yield the
real-rootedness of $\Gamma_M(x)$, while
\eqref{eq:intro-gamma-transform} transfers the result to $Z_M(x)$.

Uniform matroids form a subclass of sparse paving matroids. Our second main
result proves that, for uniform matroids of fixed corank, the $Z$-polynomials
in consecutive ranks strictly interlace. This strengthens the weak interlacing
predicted by the conjecture of Proudfoot, Xu, and Young~\cite{PXY2018}.
Recall that they call a sequence
$\{M_d\}_{d\ge0}$ a \emph{nice family} if $\operatorname{rk}M_d=d$ and, for
every flat $F$ of $M_d$ of corank $k$, the contraction $M_d/F$ has a lattice
of flats isomorphic to that of $M_k$. They conjectured that $Z_{M_d}(x)$
interlaces $Z_{M_{d-1}}(x)$ for every $d\ge1$ in every nice family. Together
with their real-rootedness conjecture, this predicts interlacing on the
negative real axis.

We write $P\preccurlyeq Q$ and $P\prec Q$ for weak and strict interlacing in
the direction specified in \Cref{sec:interlacing-criteria}. With this notation,
the conjecture of Proudfoot, Xu, and Young reads
$Z_{M_{d-1}}(x)\preccurlyeq Z_{M_d}(x)$ for every $d\ge1$.

Following Proudfoot, Xu, and Young~\cite{PXY2018}, for $m,d\ge0$ we write
$U_{m,d}$ for the uniform matroid of rank $d$ and corank $m$, that is, the
uniform matroid of rank $d$ on $m+d$ elements. For each fixed $m\ge1$, the
matroids $\{U_{m,d}\}_{d\ge0}$ form a nice family. We prove the following
strict form of the conjectured comparison.

\begin{theorem}\label{thm:uniform-Z-strict-interlacing}
For $m,d\ge1$,
\(
Z_{U_{m,d}}(x)\prec Z_{U_{m,d+1}}(x).
\)
\end{theorem}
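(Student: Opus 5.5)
The proof runs through the $\gamma$-polynomials and then transfers to $Z$. Write $\Gamma_{m,d}(x)=\Gamma_{U_{m,d}}(x)$, so that $Z_{U_{m,d}}(x)=(1+x)^d\Gamma_{m,d}\bigl(x/(1+x)^2\bigr)$.

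The first step is to establish the $\gamma$-level statement: $\Gamma_{m,d}$ and $\Gamma_{m,d+1}$ have only negative simple real zeros and strictly interlace in the appropriate sense. Their degrees are $\lfloor d/2\rfloor$ and $\lfloor (d+1)/2\rfloor$, so the two parities of $d$ give the two degree patterns (equal degrees, or degrees differing by one). I would get this from the same machinery used for \Cref{thm:main}. For uniform matroids the auxiliary polynomial is explicit: it comes from the known closed formula for the coefficients of $Z_{U_{m,d}}$, hence of $\Gamma_{m,d}$, as a finite hypergeometric-type sum. Its Poisson transform has a Laguerre partial-fraction expansion with controlled poles and residues. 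Consecutive ranks $d,d+1$ correspond to a shift in the Jensen-polynomial index. I would therefore look for a three-term relation or a derivative relation between consecutive Jensen polynomials, of the type $J_{d+1}=aJ_d+b\,x J_d'$ or a Christoffel--Darboux-type identity. Combined with the standard fact that $p$ and $p'$ (or $xp'$) strictly interlace for real-rooted $p$ with simple zeros, this gives strict interlacing. The strictness itself comes from simplicity of the zeros, which the pole/residue sign control provides: between consecutive poles the rational function is monotone, so each zero is simple.

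The second step transfers interlacing from $\Gamma$ to $Z$. The map $x\mapsto t=x/(1+x)^2$ sends $(-\infty,0)$ two-to-one onto $(-\infty,-1/4]$, with the two preimages of $t$ exchanged by $x\mapsto 1/x$ and the fold at $x=-1$. I first need all zeros of $\Gamma_{m,d}$ to lie in $(-\infty,-1/4)$. This follows from $Z_{U_{m,d}}$ being real-rooted with negative zeros (\Cref{thm:main}) together with the correspondence; alternatively I would check directly that $\Gamma$ has no zeros in $[-1/4,0)$. Each zero $t_i<-1/4$ then gives two zeros $x_i<-1<1/x_i$ of $Z$. If $d$ is odd, there is an extra zero at $x=-1$ from the factor $(1+x)$.

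Next I would verify that strict interlacing of the $t$-zeros yields strict interlacing of the $x$-zeros. The map is monotone on $(-\infty,-1]$ and on $[-1,0)$, and it sends the ordering by distance from $-1$ to the order in $t$. The zeros of $Z_{U_{m,d}}$ and $Z_{U_{m,d+1}}$ therefore alternate on each half, and the zero at $-1$, present for exactly one of the two because exactly one rank is odd, sits correctly in the middle.

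The main obstacle is the first step: finding the identity linking consecutive ranks at the level of the Poisson transform, and showing that it preserves the direction of interlacing as $\preccurlyeq$ is defined in \Cref{sec:interlacing-criteria}. I would first compute small cases to guess the relation, for example $m=1$, where the Narayana polynomials give the known interlacing. Then I would prove it by comparing residues: if $R_d(z)=\sum_j c_j^{(d)}/(z-p_j)$ with common poles $p_j$, then strict interlacing follows if the ratio of residues $c_j^{(d+1)}/c_j^{(d)}$ is strictly monotone in $j$.
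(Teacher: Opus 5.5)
Your Step 1 carries the whole theorem, and none of the mechanisms you propose for it actually works.

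\begin{itemize}
\item $\Gamma_{m,d}$ is not a Jensen polynomial. It equals $N_{m,d}(\vartheta)H_{m,d}(x)$, and passing from $d$ to $d+1$ changes both the multiplier $N_{m,d}$ and the base polynomial $H_{m,d}$. So there is no index shift of a fixed Jensen sequence to exploit. In the proof of \Cref{thm:main}, the Jensen polynomial only certifies (via the Craven--Csordas criterion) that one operator $N_{m,d,\lambda}(\vartheta)$ preserves real-rootedness. It says nothing comparing two ranks.
\item A first-order relation $\Gamma_{m,d+1}=a\Gamma_{m,d}+bx\Gamma_{m,d}'$ (or with the roles reversed) exists for $m=1$, namely $\Gamma_{1,d}=(1-2\vartheta/(d+1))\Gamma_{1,d+1}$, but not in general. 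For instance, $\Gamma_{2,4}(x)=1+16x+7x^2$ and $\Gamma_{2,5}(x)=1+30x+40x^2$ satisfy no such relation in either direction.
\item Your residue-ratio test, applied to $T_{m,d,0}$ and $T_{m,d+1,0}$ (which do share the denominator $(m-1)!\,xL^{(1)}_{m-1}(-x)$), would at best show that these Poisson transforms interlace. You give no route from there to the $\Gamma$'s.
\item Simple zeros of the Poisson transform say nothing about the zeros of $\Gamma_{m,d}$ being simple, or disjoint from those of $\Gamma_{m,d+1}$. That disjointness is exactly what strictness needs.
\end{itemize}

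The missing idea is to put both polynomials on one base polynomial and compare the operator symbols themselves. The rank recursion $H_{m,d}=(1-2\vartheta/(d+1))H_{m,d+1}$ gives $\Gamma_{m,d}=A(\vartheta)H$ and $\Gamma_{m,d+1}=B(\vartheta)H$, where $H=H_{m,d+1}$ has distinct negative zeros (via Jacobi polynomials), $A(y)=(1-2y/(d+1))N_{m,d}(y)$, and $B=N_{m,d+1}$. The zeros of $N_{m,d}$ lie one in each interval $(-j,-j+1)$ and move right as $d$ increases (\Cref{lem:N-uniform-boxes}). Hence $A$ and $B$ strictly interlace as polynomials in $y$, and $B$ has no zero in $[0,q]$, where $q=\lfloor (d+1)/2\rfloor$.

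\Cref{lem:common-H-interlacing} then does the transfer. Every nonzero combination of $A$ and $B$ has simple real zeros, at most one of them in $[0,q]$. Factors $\vartheta-r$ with $r\notin[0,q]$ preserve degree and distinct negative zeros, and the one remaining factor still gives simple real zeros. Hermite--Kakeya--Obreschkoff then yields strict interlacing. The direction is fixed by the degrees for odd $d$, and read off from $\Gamma_{m,d}'(0)<\Gamma_{m,d+1}'(0)$ for even $d$.

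Your Step 2 is essentially \Cref{prop:transformation-preserves-interlacing}, and your order tracking gives the right direction in both parities. However, the geometry needs fixing. For $x<0$, the map $x\mapsto x/(1+x)^2$ has a pole at $-1$, not a fold, and sends each of $(-\infty,-1)$ and $(-1,0)$ bijectively onto $(-\infty,0)$. Negativity of the zeros of $\Gamma$ is all you need. Your claimed requirement that they lie in $(-\infty,-1/4)$ is false: for example, $\Gamma_{5,2}(x)=1+5x$ has its zero at $-1/5$.
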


The associated $\gamma$-polynomials satisfy the following three strict
interlacing relations.

\begin{theorem}\label{thm:uniform-gamma-all-interlacings}
(i) For $m,d\ge1$,
\(
\Gamma_{U_{m,d}}(x)\prec\Gamma_{U_{m,d+1}}(x)
\).

(ii) For $m\ge1$ and $d\ge2$,
\(
\Gamma_{U_{m,d}}(x)\prec\Gamma_{U_{m+1,d}}(x)
\).

(iii) For $m,d\ge1$,
\(
\Gamma_{U_{m,d}}(x)\prec\Gamma_{U_{m+1,d+1}}(x)
\).
\end{theorem}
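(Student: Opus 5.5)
The plan is to make the $\gamma$-polynomials of uniform matroids explicit, recognise them as the same kind of Jensen/Poisson-transform objects that appear in the proof of \Cref{thm:main}, and then derive each of (i)--(iii) from a linear recurrence connecting neighbouring polynomials, combined with a Hermite--Kakeya--Obreschkoff type criterion from \Cref{sec:interlacing-criteria}.

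\textbf{Step 1: explicit formula.} Start from the known closed form for $Z_{U_{m,d}}(x)$ (equivalently, for the Kazhdan--Lusztig polynomials of uniform matroids), and extract $\Gamma_{U_{m,d}}(x)=\sum_i \gamma_i(m,d)x^i$ from \eqref{eq:intro-gamma-transform}. We expect $\gamma_i(m,d)$ to be a product of binomial coefficients in $d$ and $2i$ times a factor depending only on $i$ and $m$. The proof of \Cref{thm:main}, specialised to the case with no circuit-hyperplanes, should then present $\Gamma_{U_{m,d}}$ as a Jensen-type polynomial
\[
\Gamma_{U_{m,d}}(x)=\sum_{i}\binom{d}{2i}\,a_i(m)\,x^i,
\]
possibly up to a harmless normalisation, where $(a_i(m))_i$ comes from the Laguerre/Poisson-transform analysis. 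From that proof we can take over real-rootedness with only negative zeros, and, once we check the residues there are nonzero, simplicity of all zeros.

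\textbf{Step 2: three-term relations.} For each of (i)--(iii), derive an identity of the form
\[
c\,\Gamma_{\text{new}}(x)=\alpha(x)\,\Gamma_{\text{old}}(x)+\beta\,x\,\Gamma_{\text{old}}'(x),
\]
with $\alpha$ of degree at most one, $\beta>0$, and no sign change of $\alpha$ on $(-\infty,0)$. A second option is an identity expressing $\Gamma_{\text{new}}$ as a positive combination of $\Gamma_{\text{old}}$ and a polynomial already known to interlace it. Concretely, this means checking a Pascal-type identity on the coefficients $\gamma_i$:
\begin{itemize}
\item For (i), shifting $d\mapsto d+1$ uses $\binom{d+1}{2i}=\binom{d}{2i}+\binom{d}{2i-1}$.
\item For (ii), shifting $m$ acts only on the $a_i(m)$ and should correspond to a contiguous relation of the underlying hypergeometric/Laguerre family.
\item For (iii), the diagonal shift combines the two previous relations.
\end{itemize}
Given such an identity, evaluate $\Gamma_{\text{new}}$ at consecutive simple zeros $r_1>r_2>\cdots$ of $\Gamma_{\text{old}}$. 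There $\Gamma_{\text{new}}(r_j)$ has the sign of $\beta r_j\Gamma_{\text{old}}'(r_j)$, and these signs alternate. Counting degrees (adding the behaviour at $0$ and at $-\infty$ when the degree rises by one) then gives strict interlacing in the direction of \Cref{sec:interlacing-criteria}.

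\textbf{Step 3: edge cases and the main obstacle.} When $d$ is even in (i), or in (ii), the degree $\lfloor d/2\rfloor$ may not increase, so the parity cases must be checked separately. The restriction $d\ge2$ in (ii) handles the degenerate case where $\Gamma$ is constant. I expect the main obstacle to be Step 2 for (ii) and (iii). The $m$-dependence of $a_i(m)$ is not a simple binomial shift, so a clean relation with a derivative may not exist.

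If that happens, the fallback is to work at the level of the Poisson/Laguerre partial-fraction representation from the proof of \Cref{thm:main}. There we would show that the ratio $\Gamma_{U_{m+1,d}}/\Gamma_{U_{m,d}}$ is strictly monotone between its poles, that is, that it is a Nevanlinna-type function with residues of constant sign. We would then use the equivalence, stated in \Cref{sec:interlacing-criteria}, between strict interlacing and this residue-sign condition.
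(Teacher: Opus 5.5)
Your Step~2 fails, and the cause is already in Step~1: for $m\ge2$ the $d$-dependence of $[x^i]\Gamma_{U_{m,d}}$ is not confined to a factor $\binom{d}{2i}$. The coefficient is $\frac{d!}{(d-2i)!\,i!\,(i+m)!}N_{m,d}(i)$, and $N_{m,d}(y)=\sum_{j=0}^{m-1}(m-j)!\binom{m+d}{j}(y)_j$ itself depends on $d$. For example, $\Gamma_{U_{2,2}}(x)=1+2x$ and $\Gamma_{U_{2,3}}(x)=1+7x$, whereas your ansatz forces the second linear coefficient to be $3\cdot2=6$. The ansatz holds only for $m=1$, where $\Gamma_{U_{1,d}}=(1-\frac{2\vartheta}{d+1})\Gamma_{U_{1,d+1}}$ and your sign argument is essentially \Cref{lem:Euler-one-factor}. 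For $m\ge2$ there is in general no identity $c\,\Gamma_{\mathrm{new}}=\alpha(x)\Gamma_{\mathrm{old}}+\beta x\Gamma_{\mathrm{old}}'$ with $\deg\alpha\le1$, not even for (i). Take $\Gamma_{U_{2,6}}(x)=1+50x+135x^2+26x^3$ and $\Gamma_{U_{2,7}}(x)=1+77x+350x^2+203x^3$: comparing the coefficients of $x^4,x^0,x^1,x^2$ forces $c=0$, whichever polynomial you call old. The actual relation has order $m$ in $\vartheta$. With $H=H_{m,d+1}$ one has $\Gamma_{U_{m,d}}=A(\vartheta)H$ and $\Gamma_{U_{m,d+1}}=B(\vartheta)H$, where $\deg A=m$ and $\deg B=m-1$. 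So your alternation argument has nothing to start from.

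Your fallback does not repair this. The partial fractions in \Cref{sec:real-rootedness} describe $T_{m,d,\lambda}/\Tcal[(y)_m]$, whose poles are Laguerre zeros, and say nothing about $\Gamma_{U_{m+1,d}}/\Gamma_{U_{m,d}}$. Requiring that ratio to have residues of one sign at the zeros of its denominator is just a restatement of strict interlacing, and you give no way to compute those residues. Also, \Cref{thm:main} yields real-rootedness of $\Gamma$ via a multiplier-sequence criterion, not simple zeros; the nonzero residues there belong to $T_{m,d,\lambda}$, not to $\Gamma$.

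The missing idea is to move the interlacing question from $x$ to the operator variable $y$. Write $\Gamma_{U_{m,d}}=N_{m,d}(\vartheta)H_{m,d}(x)$, where $H_{m,d}$ has distinct negative zeros. Then use $H_{m,d}=(1-\frac{2\vartheta}{d+1})H_{m,d+1}$ and $H_{m,d}=(\vartheta+m+1)H_{m+1,d}$ to write each pair in (i)--(iii) as $A(\vartheta)H$ and $B(\vartheta)H$ over a common $H$. Here $B$ is $N_{m,d+1}$, $N_{m+1,d}$ or $N_{m+1,d+1}$, and $A$ is $N_{m,d}$ times the corresponding linear factors.

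Next, show that $N_{m,d}$ has exactly one zero in each interval $(-j,-j+1)$, $1\le j\le m-1$, and that these zeros move strictly right as $d$ or $m$ increases. For $d$, use the partial fractions of $N_{m,d}(-y)/\prod_{j=0}^{m-1}(y-j)$, whose residues all have the same sign. For $m$, use $N_{m+1,d}=(y+m+1)N_{m,d}+\binom{m+d}{m}(y)_m$. It follows that $A$ and $B$ strictly interlace and that $B$ has no zero in $[0,q]$, where $q=\deg H$.

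The transfer is then \Cref{lem:common-H-interlacing}. Every nonzero combination $C$ of $A$ and $B$ has simple real zeros, at most one of them in $[0,q]$, since $A/B$ is monotone there. Apply first the factors $\vartheta-r$ with $r\notin[0,q]$, which preserve distinct negative zeros, and then at most one more factor. This shows that $C(\vartheta)H$ has simple real zeros, and \Cref{thm:HKO} converts this into strict interlacing. In the equal-degree cases the direction follows from $B(1)>A(1)$ via \Cref{lem:interlacing-direction}.
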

For $m\ge1$ and $d\ge2$, the five strict interlacing relations shown below
hold in the direction of the arrows:
\[
\begin{array}{c@{\qquad}c@{\qquad}c}
\Gamma_{U_{m,d}}(x)
& \longrightarrow
& \Gamma_{U_{m,d+1}}(x)
\\[1mm]
\downarrow & \searrow & \downarrow
\\[1mm]
\Gamma_{U_{m+1,d}}(x)
& \longrightarrow
& \Gamma_{U_{m+1,d+1}}(x).
\end{array}
\]

To prove the interlacing results for uniform matroids, we rewrite the
coefficient formulas using the Euler operator
$\vartheta=x\,\mathrm d/\mathrm dx$. For each of the three comparisons between
the $\gamma$-polynomials, the rank and corank recursions allow us to write the
two polynomials as $A(\vartheta)H(x)$ and $B(\vartheta)H(x)$ with a common polynomial $H(x)$. A
connection with Jacobi polynomials shows that $H(x)$ has distinct negative zeros.
We compare the zeros of $A$ and $B$ and then apply a criterion for two Euler
operators acting on the same polynomial.

The rest of this paper is organized as follows. In
\Cref{sec:preliminaries}, we derive the Euler-operator formulas and establish
the results on $n$-sequences, Jensen polynomials, and interlacing needed later.
\Cref{sec:real-rootedness} proves \Cref{thm:main} by establishing the
real-rootedness of the Poisson transform from its residue expansion and then
applying Jensen's theorem and the criterion for $n$-sequences. The cases
$d=4$ and $d=5$ follow from direct evaluations of the $\gamma$-polynomials. In
\Cref{sec:uniform-interlacing}, we locate the zeros of $N_{m,d}(y)$, prove the
three strict comparisons in \Cref{thm:uniform-gamma-all-interlacings}, and
deduce \Cref{thm:uniform-Z-strict-interlacing}.

\section{The Euler operator, \texorpdfstring{$n$}{n}-sequences, and interlacing}
\label{sec:preliminaries}

Throughout this section, $\vartheta=x\,\mathrm d/\mathrm dx$ denotes the
Euler operator.

\subsection{The \texorpdfstring{$\gamma$}{gamma}-polynomials of sparse paving
matroids and the Euler operator}

Since $Z_M(x)$ is palindromic and $Z_M(0)=1$, its
unimodality~\cite[Theorem~1.2]{BHM2026} implies that every coefficient is
positive. Consequently, the equivalence in
\cite[Proposition~5.3]{FNV2023} shows that $Z_M(x)$ is real-rooted if and only
if $\Gamma_M(x)$ has only negative real zeros. Thus the real-rootedness assertion
in \Cref{thm:main} reduces to proving that $\Gamma_M(x)$ has only negative real
zeros. For a sparse paving matroid, we
express $\Gamma_M(x)$ as a polynomial in the Euler operator acting on an
auxiliary polynomial that will be shown to have distinct negative zeros. This
Euler-operator expression is the starting point for the proof in
\Cref{sec:real-rootedness}.

For positive integers $m$ and $d$, write
$\Gamma_{m,d}(x)=\Gamma_{U_{m,d}}(x)$.
For $j\ge0$, let $(y)_0=1$ and
$(y)_j=y(y+1)\cdots(y+j-1)$ for $j\ge1$, and define
\begin{equation}\label{eq:N-uniform-def}
N_{m,d}(y)
=
\sum_{j=0}^{m-1}
(m-j)!\binom{m+d}{j}(y)_j.
\end{equation}
For $y\ge0$, every summand is nonnegative and the term with $j=0$ is $m!$.
Hence $N_{m,d}(y)\ge m!>0$. We first express $\Gamma_{m,d}(x)$ in terms of
$N_{m,d}(y)$.

\begin{proposition}
\label{prop:uniform-Gamma-coefficients}
For positive integers $m$ and $d$,
\begin{equation}
\Gamma_{m,d}(x)
=
\sum_{i=0}^{\lfloor d/2\rfloor}
\frac{d!}{(d-2i)!\,i!\,(i+m)!}
N_{m,d}(i)x^i.
\label{eq:uniform-coeff}
\end{equation}
\end{proposition}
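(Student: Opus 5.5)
The plan is to start from an explicit formula for the $\gamma$-coefficients of uniform matroids and reorganize it into the product form \eqref{eq:uniform-coeff}. One could derive everything from scratch, but the cleanest route uses the $\gamma$-positivity computation of Ferroni, Nasr, and Vecchi~\cite{FNV2023}. They give an explicit formula for the $\gamma$-coefficients of uniform matroids (equivalently, of sparse paving matroids with no circuit-hyperplanes). My recollection is that it is a sum over an auxiliary index $j$ of products of binomial coefficients, with a factor depending only on $(d,i)$. I would take that formula as the starting point and argue only that it agrees with \eqref{eq:uniform-coeff}, coefficient by coefficient in $x^i$.

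If I recall their formula incorrectly, the fallback is a self-contained derivation.
\begin{itemize}
\item First expand $Z_{U_{m,d}}(x)$ over flats. The proper flats of $U_{m,d}$ are the subsets of size $k<d$, and the contraction by such a flat is $U_{m,d-k}$. Together with the top flat $E$, this gives
\[
Z_{U_{m,d}}(x)=x^d+\sum_{k=0}^{d-1}\binom{m+d}{k}x^kP_{U_{m,d-k}}(x).
\]
\item Next insert the known closed form for the Kazhdan--Lusztig coefficients of uniform matroids (as used by Gao et al.~\cite{GaoEtAl2021}).
\item Finally, extract $\Gamma$ via the inverse of \eqref{eq:intro-gamma-transform}. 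Concretely, if $Z(x)=\sum_k z_kx^k$ is palindromic of degree $d$ with $\Gamma(x)=\sum_i\gamma_ix^i$, then $z_k=\sum_i\gamma_i\binom{d-2i}{k-i}$. This triangular system inverts to
\[
\gamma_i=\sum_{k\le i}(-1)^{i-k}\frac{d-2i+\cdots}{\cdots}\,z_k,
\]
a Chebyshev-type inversion.
\end{itemize}

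In either route, the key algebraic step is the same. I would pull out the factor $\frac{d!}{(d-2i)!\,i!}$, which is $\binom{d}{2i}\binom{2i}{i}\,i!$ up to normalization. I would then show that the remaining $j$-sum equals $\frac{1}{(i+m)!}\sum_{j=0}^{m-1}(m-j)!\binom{m+d}{j}(i)_j$.

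To prove that identity, I would fix $m,d$ and treat both sides as functions of $i$. The right side, multiplied by $(i+m)!$, is a polynomial in $i$ of degree $m-1$. So it suffices to check that the left side times $(i+m)!$ is also a polynomial in $i$ of degree at most $m-1$, and then to compare values at $m$ points. Natural choices of points are $i=0,-1,\dots,-(m-1)$, where $(i)_j$ vanishes for $j>-i$ and the right side collapses to a short sum. Alternatively, one can argue by induction on $m$ using Pascal-type recursions in $\binom{m+d}{j}$.

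The main obstacle is this combinatorial identity: matching the auxiliary summation from the literature (or from the KL inversion) with the rising-factorial sum defining $N_{m,d}(i)$. I would first try the polynomial-interpolation argument, since it avoids manipulating alternating sums directly. If that fails, I would fall back on a Zeilberger/WZ-style certificate for the two-parameter sum.
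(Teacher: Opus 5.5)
\textbf{There is a genuine gap.} You have the right overall shape: import an explicit formula for the uniform $\gamma$-coefficients, then verify an identity against $N_{m,d}(i)$. But the substance is entirely deferred. You never commit to a starting formula, so the identity you yourself call the main obstacle is never stated, let alone proved. The Ferroni--Nasr--Vecchi formula is only half-remembered, the Kazhdan--Lusztig closed form is not written down, and the Chebyshev-type inversion contains literal placeholders.

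The interpolation step also cannot be run on an unspecified expression. The identity concerns only the integers $0\le i\le\lfloor d/2\rfloor$. These supply fewer than the $m$ nodes a degree-$(m-1)$ polynomial needs once $m>\lfloor d/2\rfloor+1$. Your proposed nodes $i=-1,\dots,-(m-1)$ lie outside the range where $[x^i]\Gamma_{m,d}(x)$ means anything. Evaluating ``the left side'' there requires a closed form that, after multiplying by $(i+m)!$, is visibly a polynomial in $i$. A double sum produced by Kazhdan--Lusztig inversion, with $i$-dependent summation ranges, does not supply this automatically.

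The missing ingredient is exactly such a closed form. The paper starts from the formula of Wu, Xie, and Zhang: for $1\le i\le\lfloor d/2\rfloor$,
\[
[x^i]\Gamma_{m,d}(x)=\frac{d!}{(d-2i)!\,i!\,(i+m)!}\,S_{m,d}(i),
\qquad
S_{m,d}(i)=\frac{(i+m)!}{(i-1)!}\sum_{h=0}^{m-1}\frac{\binom{d+h}{h}}{(h+i)(h+i+1)}.
\]
The constant term $1$ is checked separately. Since $(i+m)!/(i-1)!=(i)_{m+1}$ and each denominator cancels two of its factors, $S_{m,d}$ is a polynomial of degree $m-1$.

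The paper then observes that $S_{1,d}=N_{1,d}=1$. It also shows that both families satisfy
\[
X_{m+1}(i)=(i+m+1)X_m(i)+\binom{m+d}{m}(i)_m .
\]
For $N$ this is your Pascal-type recursion, obtained via $(y+m+1)(y)_j=(y)_{j+1}+(m+1-j)(y)_j$. Induction on $m$ then finishes the proof.

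Your interpolation idea would also work once this formula is in hand. At $i=-k$ with $0\le k\le m-1$, only the terms $h=k$ and, for $k\ge1$, $h=k-1$ survive. They give
\[
(-1)^kk!\,(m-k)!\Bigl[\binom{d+k}{k}-\binom{d+k-1}{k-1}\Bigr]=(-1)^k(m-k)!\,(d)_k ,
\]
which equals $N_{m,d}(-k)$ by Chu--Vandermonde. Without fixing a concrete starting formula, however, neither route is actually carried out.
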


\begin{proof}
For $i=0$, the coefficient on the right is $1$, which is the constant term of
$\Gamma_{m,d}(x)$. Assume that $1\le i\le\lfloor d/2\rfloor$.
        We use the following formula of Wu,
Xie, and Zhang~\cite[Theorem~3, Eq.~(2)]{WXZ2024}:
\[
        [x^i]\Gamma_{m,d}(x)
=
\frac{d!}{(d-2i)!\,i!\,(i+m)!}S_{m,d}(i),
\]
where
\(
S_{m,d}(i)
=\frac{(i+m)!}{(i-1)!}
\sum_{h=0}^{m-1}
\frac{\binom{d+h}{h}}{(h+i)(h+i+1)}.
\)
Then $S_{1,d}(i)=1$, and separating the last summand gives
\[
S_{m+1,d}(i)
=(i+m+1)S_{m,d}(i)
+\binom{m+d}{m}(i)_m.
\]
On the other hand, the identity
$(y+m+1)(y)_j=(y)_{j+1}+(m+1-j)(y)_j$ and Pascal's identity give
\begin{equation}\label{eq:N-recursion}
N_{m+1,d}(y)
=(y+m+1)N_{m,d}(y)
+\binom{m+d}{m}(y)_m.
\end{equation}
Since $N_{1,d}(y)=1$, induction on $m$ yields
$S_{m,d}(i)=N_{m,d}(i)$, proving \eqref{eq:uniform-coeff}.
\end{proof}

For positive integers $m$ and $d$ and every $\lambda\in\R$, define
\begin{equation}
\Gamma_{m,d,\lambda}(x)
=\Gamma_{m,d}(x)
-\lambda\sum_{i=1}^{\lfloor d/2\rfloor}
\frac{2}{i+1}
\binom{d-1}{i-1}
\binom{d-i}{i}x^i.
\label{eq:sparse-gamma-formula}
\end{equation}
If $M$ is a sparse paving matroid of rank $d$ and corank $m$ with $\lambda$
circuit-hyperplanes, then the formula of Ferroni, Nasr, and
Vecchi~\cite[Eq.~(12), Proposition~5.12, and Remark~5.14]{FNV2023} gives
\(
\Gamma_M(x)=\Gamma_{m,d,\lambda}(x).
\)
Thus the $\gamma$-polynomial of $M$ depends only on $m$, $d$, and $\lambda$.
In particular, $\Gamma_{m,d,0}(x)=\Gamma_{m,d}(x)$.

For $m,d\ge1$ and $\lambda\in\R$, define
\begin{equation}\label{eq:N-def}
N_{m,d,\lambda}(y)
=
N_{m,d}(y)
-\frac{2\lambda}{d}\,
y(y+2)_{m-1}.
\end{equation}
Thus $N_{m,d,0}(y)=N_{m,d}(y)$. We also set
\[
f_{m,d,i}
=
\frac{d!}{(d-2i)!\,i!\,(i+m)!}
\quad
\left(0\le i\le\left\lfloor\frac d2\right\rfloor\right).
\]

Multiplying the correction term in $N_{m,d,\lambda}(i)$ by $f_{m,d,i}$ gives
exactly the coefficient correction in \eqref{eq:sparse-gamma-formula}, so
\Cref{prop:uniform-Gamma-coefficients} yields the following identity.

\begin{proposition}
\label{prop:Gamma-coefficients}
Let $m$ and $d$ be positive integers, and let $\lambda\in\R$. Then, for every
integer $i$ with $0\le i\le\lfloor d/2\rfloor$,
\begin{equation}\label{eq:Gamma-coefficients}
[x^i]\Gamma_{m,d,\lambda}(x)
=f_{m,d,i}N_{m,d,\lambda}(i).
\end{equation}
\end{proposition}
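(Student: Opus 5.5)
Combining \Cref{prop:uniform-Gamma-coefficients} with \eqref{eq:sparse-gamma-formula} reduces the statement to a coefficient identity. For $i=0$ there is nothing to check: the correction sum in \eqref{eq:sparse-gamma-formula} starts at $i=1$, and the correction term in \eqref{eq:N-def} has the factor $y$, so it vanishes at $y=0$. For $1\le i\le\lfloor d/2\rfloor$, \Cref{prop:uniform-Gamma-coefficients} gives $[x^i]\Gamma_{m,d}(x)=f_{m,d,i}N_{m,d}(i)$. By linearity it therefore suffices to prove
\[
f_{m,d,i}\cdot\frac{2}{d}\,i\,(i+2)_{m-1}
=\frac{2}{i+1}\binom{d-1}{i-1}\binom{d-i}{i}.
\]

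To prove this, first write $(i+2)_{m-1}=\frac{(i+m)!}{(i+1)!}$. The factor $(i+m)!$ then cancels against the one in the denominator of $f_{m,d,i}$, and the left side becomes
\[
\frac{d!}{(d-2i)!\,i!}\cdot\frac{2i}{d\,(i+1)!}
=\frac{2\,(d-1)!}{(d-2i)!\,(i-1)!\,(i+1)!}.
\]
For the right side, expand the binomials:
\[
\binom{d-1}{i-1}\binom{d-i}{i}=\frac{(d-1)!}{(i-1)!\,(d-i)!}\cdot\frac{(d-i)!}{i!\,(d-2i)!}=\frac{(d-1)!}{(i-1)!\,i!\,(d-2i)!}.
\]
Multiplying by $\frac{2}{i+1}$ turns $i!$ into $(i+1)!$, so both sides agree.

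This factorial bookkeeping is the only real step, and it is routine. The one point that needs care is the edge case $m=1$. There $(y+2)_0=1$, and the formula $(i+2)_{m-1}=(i+m)!/(i+1)!$ still holds, so no separate treatment is needed. Subtracting $\lambda$ times the verified identity from the uniform coefficient formula then gives \eqref{eq:Gamma-coefficients} for every $\lambda\in\R$.
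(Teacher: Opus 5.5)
Your proof is correct and follows the paper's own argument. The paper likewise uses \Cref{prop:uniform-Gamma-coefficients} and linearity in $\lambda$ to reduce the statement to checking that $f_{m,d,i}\cdot\frac{2}{d}\,i\,(i+2)_{m-1}$ equals the correction coefficient in \eqref{eq:sparse-gamma-formula}; you simply write out the factorial bookkeeping, including the $i=0$ and $m=1$ edge cases, that the paper leaves implicit.
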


Define the polynomial
\[
H_{m,d}(x)
:=
\sum_{i=0}^{\lfloor d/2\rfloor}f_{m,d,i}x^i.
\]
Since $\vartheta x^i=ix^i$, \Cref{prop:Gamma-coefficients} yields
\begin{equation}\label{eq:Gamma-Euler}
\Gamma_{m,d,\lambda}(x)
=N_{m,d,\lambda}(\vartheta)H_{m,d}(x).
\end{equation}

\begin{lemma}
\label{lem:H-simple-roots}
For every $m,d\ge1$, the polynomial $H_{m,d}(x)$ has distinct negative real
zeros.
\end{lemma}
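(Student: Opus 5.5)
We plan to rewrite $H_{m,d}(x)$ as a coefficientwise transform of a classical orthogonal polynomial, namely the Hermite polynomial, and then use the Laguerre multiplier-sequence theorem to pass the zero information to $H_{m,d}$. Write
\[
H_{m,d}(x)=\sum_{i=0}^{\lfloor d/2\rfloor}\frac{p_i}{(i+m)!}\,x^i,
\qquad
p(x):=\sum_{i=0}^{\lfloor d/2\rfloor}p_ix^i=\sum_{i=0}^{\lfloor d/2\rfloor}\frac{d!}{(d-2i)!\,i!}\,x^i .
\]
Every coefficient of $H_{m,d}$ is positive and its degree is exactly $\lfloor d/2\rfloor$. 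Therefore it suffices to show that $H_{m,d}$ has $\lfloor d/2\rfloor$ distinct real zeros. Positivity of the coefficients then forces all of them to be negative.

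\emph{Step 1 (the polynomial $p$).} Recall the Hermite polynomial $\mathrm{He}_d(t)=\sum_i(-1)^i\frac{d!}{i!(d-2i)!2^i}t^{d-2i}$. It has $d$ simple real zeros, symmetric about $0$, and $0$ is a zero exactly when $d$ is odd. A direct comparison of coefficients gives
\[
t^d\,p\!\left(-\frac{1}{2t^2}\right)=\mathrm{He}_d(t).
\]
Consequently each pair $\pm t_j$ of nonzero zeros of $\mathrm{He}_d$ produces the zero $x_j=-1/(2t_j^2)<0$ of $p$. Distinct pairs give distinct values $x_j$. This yields $\lfloor d/2\rfloor=\deg p$ distinct negative zeros of $p$.

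\emph{Step 2 (the multiplier sequence).} The function $\varphi(z)=1/\Gamma(z+m+1)$ is entire, lies in the Laguerre--P\'olya class, and has only negative zeros (at $z=-m-1,-m-2,\dots$). By Laguerre's theorem, $\{\varphi(i)\}_{i\ge0}=\{1/(i+m)!\}_{i\ge 0}$ is therefore a multiplier sequence. Applying it to $p$ gives $H_{m,d}$, so $H_{m,d}$ has only real zeros, and these are negative by positivity of the coefficients.

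\emph{Step 3 (simplicity): the expected main obstacle.} Laguerre's theorem only preserves real-rootedness, not simplicity of the zeros. My first attempt would be to factor the sequence as
\[
\frac{1}{(i+m)!}=\frac{1}{i!}\prod_{k=1}^{m}\frac{1}{i+k}.
\]
I would then handle each factor $1/(i+k)$ as the operator
\[
q\mapsto x^{-k}\int_0^x t^{k-1}q(t)\dd t .
\]
For this operator I would try to show directly that it maps a polynomial with positive coefficients and simple negative zeros to another such polynomial of the same degree. The tool would be the sign pattern of $x^kQ(x)$ at the zeros of $q$, where $(x^kQ)'=x^{k-1}q$. The remaining factor $1/i!$ would be treated separately by citing the Craven--Csordas result that nontrivial multiplier sequences preserve simple zeros.

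If this becomes messy, the fallback is the route the introduction hints at. The coefficient ratio $f_{m,d,i+1}/f_{m,d,i}=\frac{(d-2i)(d-2i-1)}{(i+1)(i+m+1)}$ shows that
\[
H_{m,d}(x)=\frac1{m!}\,{}_2F_1\!\left(-\tfrac d2,-\tfrac{d-1}2;m+1;4x\right).
\]
A quadratic transformation would then identify this with a Jacobi polynomial in a variable that maps the orthogonality interval $(-1,1)$ onto $(-\infty,0)$. The parameters should satisfy $\alpha,\beta>-1$; I would expect $\alpha=m$ and $\beta=\pm\tfrac12$ according to the parity of $d$, but this is unchecked. Standard orthogonality would then give simple zeros in the open interval, and these pull back to distinct negative zeros of $H_{m,d}$.
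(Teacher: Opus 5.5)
Your Steps 1 and 2 are sound. The identity $t^d p(-1/(2t^2))=\mathrm{He}_d(t)$ gives $p$ its $\lfloor d/2\rfloor$ distinct negative zeros. Laguerre's theorem applied to $1/\Gamma(z+m+1)$ then shows that $H_{m,d}$ is real-rooted, and positivity makes its zeros negative.

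\textbf{The gap is in Step 3: the lemma you propose for the individual factors is false.} The operator $q\mapsto x^{-k}\int_0^x t^{k-1}q(t)\dd t$ multiplies the $i$th coefficient by $1/(i+k)$. The sequence $\{1/(i+k)\}_{i\ge0}$ is not a multiplier sequence; for $k=1$ its exponential generating function is $(e^x-1)/x$, which has the nonreal zeros $2\pi i n$, $n\neq0$. Concretely, take $q(x)=(1+x)(1+2x)=1+3x+2x^2$, which has positive coefficients and simple negative zeros.
\begin{itemize}
\item For $k=1$ its image is $1+\frac32x+\frac23x^2$, with discriminant $\frac94-\frac83=-\frac5{12}<0$.
\item For $k=2$ its image is $\frac12+x+\frac12x^2=\frac12(1+x)^2$, which has a double zero.
\end{itemize}
In terms of your sign argument: $G=x^kQ$ is monotone between consecutive zeros of $q$, so each such interval holds at most one zero of $Q$. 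Nothing, however, forces the values $G(\rho_j)$ to alternate in sign. Separately, any theorem asserting that nontrivial multiplier sequences preserve simple zeros would apply directly to $\{1/(i+m)!\}$, so the factorization gains nothing.

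Your fallback is exactly the paper's proof, and your guessed parameters are correct. Pfaff's linear transformation suffices; no quadratic transformation is needed. Write $d=2q+\delta$ with $\delta\in\{0,1\}$. Then
\[
H_{m,d}(x)=\frac{q!\,(1-4x)^q}{(m+q)!}\,P_q^{(m,\delta-\frac12)}\!\left(\frac{1+4x}{1-4x}\right).
\]
The map $x\mapsto(1+4x)/(1-4x)$ increases from $(-\infty,0)$ onto $(-1,1)$. The $q$ simple zeros of the Jacobi polynomial in $(-1,1)$ therefore pull back to $q=\deg H_{m,d}$ distinct negative zeros. This gives reality, negativity and simplicity at once, making Steps 1--3 unnecessary.

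If you would rather keep Steps 1--2, simplicity follows cheaply from your hypergeometric form. Set $a=-\frac d2$, $b=\frac{1-d}2$, $c=m+1$, and $w(z)={}_2F_1(a,b;c;z)$. Then $w$ is a polynomial satisfying
\[
z(1-z)w''+[c-(a+b+1)z]w'-abw=0 .
\]
A double zero at some $z_0\notin\{0,1\}$ would force $w\equiv0$ by uniqueness for this linear ODE, contradicting $w(0)=1$. Since $H_{m,d}(x)=w(4x)/m!$ and its zeros satisfy $4x<0$, they are simple.
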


\begin{proof}
With ${}_2F_1$ denoting the Gauss hypergeometric function, we have
\[
H_{m,d}(x)
=\frac1{m!}
{}_2F_1\!\left(-\frac d2,\frac{1-d}{2};m+1;4x\right).
\]
Write $d=2q+\delta$, where $q\ge0$ and $\delta\in\{0,1\}$. The
hypergeometric form of the Jacobi polynomials~\cite[Eq.~18.5.7]{DLMF},
together with Pfaff's
transformation~\cite[Eq.~15.8.1]{DLMF}, shows that the zeros of
$H_{m,2q+\delta}(x)$ in $(-\infty,0)$ are precisely the inverse images under
$x\mapsto(1+4x)/(1-4x)$ of the zeros of
$P_q^{(m,\delta-\frac12)}$. The factor relating the two polynomials is
$q!(1-4x)^q/(m+q)!$, which does not vanish on $(-\infty,0)$. Since
$m>-1$ and $\delta-\frac12>-1$, the Jacobi polynomial
$P_q^{(m,\delta-\frac12)}$ has $q$ distinct zeros in
$(-1,1)$~\cite[Table~18.3.1 and Section~18.2(vi)]{DLMF}. The map above is
increasing from $(-\infty,0)$ onto $(-1,1)$.
Thus $H_{m,d}(x)$ has $q$ distinct negative zeros. Since
$\deg H_{m,d}(x)=q$, these are all of its zeros.
\end{proof}

The following recursions compare $H_{m,d}(x)$ as the rank or the corank
varies.

\begin{lemma}\label{lem:H-recursions}
For $m,d\ge1$,
\begin{align}
H_{m,d}(x)
&=\left(1-\frac{2\vartheta}{d+1}\right)H_{m,d+1}(x),
\label{eq:H-rank-recursion}\\
H_{m,d}(x)
&=(\vartheta+m+1)H_{m+1,d}(x).
\label{eq:H-corank-recursion}
\end{align}
\end{lemma}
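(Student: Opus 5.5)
Both identities are coefficientwise statements. Since $\vartheta x^i=ix^i$, an operator $p(\vartheta)$ multiplies the coefficient of $x^i$ by $p(i)$. So the plan is to compare $[x^i]$ of both sides using the explicit numbers $f_{m,d,i}=\frac{d!}{(d-2i)!\,i!\,(i+m)!}$. We also have to check that the ranges of $i$ match, because $\lfloor d/2\rfloor$ and $\lfloor (d+1)/2\rfloor$ can differ.

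For \eqref{eq:H-rank-recursion}, we need to show that, for $0\le i\le\lfloor (d+1)/2\rfloor$,
\[
\Bigl(1-\frac{2i}{d+1}\Bigr)f_{m,d+1,i}=\begin{cases} f_{m,d,i}, & i\le\lfloor d/2\rfloor,\\ 0, & \text{otherwise}.\end{cases}
\]
We compute
\[
\Bigl(1-\frac{2i}{d+1}\Bigr)\frac{(d+1)!}{(d+1-2i)!\,i!\,(i+m)!}
=\frac{(d+1-2i)\,d!}{(d+1-2i)!\,i!\,(i+m)!}.
\]
When $d+1-2i\ge1$, this equals $\frac{d!}{(d-2i)!\,i!\,(i+m)!}=f_{m,d,i}$.

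The only extra index occurs when $d$ is odd, namely $i=(d+1)/2$. There the factor $1-2i/(d+1)$ vanishes, so that coefficient is killed. When $d$ is even, $\lfloor (d+1)/2\rfloor=\lfloor d/2\rfloor$ and there is no extra index.

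For \eqref{eq:H-corank-recursion}, the index ranges coincide, since both sides are indexed by $0\le i\le\lfloor d/2\rfloor$. We need $(i+m+1)f_{m+1,d,i}=f_{m,d,i}$, and this follows at once from $(i+m+1)/(i+m+1)!=1/(i+m)!$.

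There is no real obstacle here. The only point needing care is the boundary index $i=(d+1)/2$ in the rank recursion, where the vanishing of the factor $1-\frac{2\vartheta}{d+1}$ at that index is exactly what makes the degrees match.
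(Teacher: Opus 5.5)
Your proposal is correct and follows the paper's proof: both compare coefficients of $x^i$ via $\vartheta x^i = i x^i$, using the ratios $f_{m,d,i}/f_{m,d+1,i} = (d+1-2i)/(d+1)$ and $f_{m,d,i}/f_{m+1,d,i} = i+m+1$. Both also observe that for odd $d$ the extra index $i=(d+1)/2$ in $H_{m,d+1}(x)$ is annihilated because the multiplier $1-\frac{2i}{d+1}$ vanishes there.
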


\begin{proof}
For every $0\le i\le\lfloor d/2\rfloor$,
\(
\frac{f_{m,d,i}}{f_{m,d+1,i}}
=\frac{d+1-2i}{d+1},
\frac{f_{m,d,i}}{f_{m+1,d,i}}
=i+m+1.
\)
Since $\vartheta x^i=ix^i$, these ratios give the coefficients in both
identities. If $d$ is odd, then $H_{m,d+1}(x)$ has the
additional term indexed by $i=(d+1)/2$, but its multiplier in the first
identity is zero.
\end{proof}

\subsection{\texorpdfstring{$n$}{n}-sequences and Jensen polynomials}

By \Cref{lem:H-simple-roots}, the polynomial $H_{m,d}(x)$ in
\eqref{eq:Gamma-Euler} is real-rooted. Thus, to prove that
$\Gamma_{m,d,\lambda}(x)$ is real-rooted, it is enough to show that
$N_{m,d,\lambda}(\vartheta)$ preserves real-rootedness. Since
$\vartheta x^i=ix^i$, every operator $A(\vartheta)$ is diagonal in the
monomial basis.

For $A\in\R[y]$ and $n\ge0$, define
\(
\Jcal_n[A](x)
=A(\vartheta)(1+x)^n
=\sum_{i=0}^{n}\binom ni A(i)x^i.
\)
This is the $n$th Jensen polynomial, with shift $0$, associated with
$A(0),A(1),\ldots$; the construction goes back to
Jensen~\cite[p.~184]{Jensen1913}. Following Craven and
Csordas~\cite[Definition~1.11]{CravenCsordas1983}, and allowing the zero
polynomial as an output, we call a real sequence $(\mu_0,\ldots,\mu_n)$ an
\emph{$n$-sequence of the first kind} if, for every real-rooted polynomial
\(
F(x)=\sum_{i=0}^{n}a_ix^i,
\)
the polynomial $\sum_{i=0}^{n}\mu_i a_i x^i$ is either the zero polynomial or
has only real zeros.
When $\mu_i=A(i)$, this coefficientwise action is precisely
$A(\vartheta)F(x)$. Thus $A(0),\ldots,A(n)$ form an $n$-sequence of the first
kind exactly when $A(\vartheta)$ preserves real-rootedness on polynomials of
degree at most $n$. In this paper, we use
``finite multiplier sequence of the first kind'' as an equivalent name for
such an $n$-sequence.
     Specializing the original finite characterization of Craven and Csordas
to $\R$ gives the following criterion.

\begin{theorem}[{\cite[Theorem~3.7]{CravenCsordas1977}}]
\label{thm:finite-multiplier}
Let $n\ge1$ and $A\in\R[y]$, and suppose that
$A(0),\ldots,A(n)$ are not all zero. Then
$A(0),\ldots,A(n)$ form an $n$-sequence of the first kind if and only if all
zeros of $\Jcal_n[A](x)$ are real and have the same sign.
\end{theorem}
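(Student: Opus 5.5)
The plan is to prove both directions through the Schur--Szeg\H{o} composition. For $P(x)=\sum_{i=0}^n\binom ni p_ix^i$ and $Q(x)=\sum_{i=0}^n\binom ni q_ix^i$, this composition is
\[
P\ast Q(x)=\sum_{i=0}^n\binom ni p_iq_ix^i .
\]
Writing a degree-$\le n$ polynomial as $F(x)=\sum_i\binom ni b_ix^i$, the multiplier action becomes
\[
\sum_i A(i)\binom ni b_ix^i=\Jcal_n[A]\ast F .
\]
So the theorem becomes a statement about when composing with the fixed polynomial $\Jcal_n[A]$ preserves real-rootedness.

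For the ``if'' direction, assume all zeros of $\Jcal_n[A]$ are real and of one sign. Replacing $A(y)$ by $(-1)^yA(y)$ corresponds to $x\mapsto -x$. I need to check that this reflection is compatible with $F(-x)$ being real-rooted, which it is. So we may assume all zeros of $\Jcal_n[A]$ are nonpositive. Then I would invoke the classical consequence of Grace's apolarity theorem (the Schur--Szeg\H{o} composition theorem). If one factor has all zeros in $(-\infty,0]$ and the other is real-rooted, then every zero of the composition has the form $-\alpha\beta$, with $\alpha$ a zero of the first factor and $\beta$ lying in the closed convex hull of the zeros of the second. Hence the composition has only real zeros, or is identically zero.

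Two degenerate situations need separate handling. The first is when $\deg\Jcal_n[A]<n$, i.e.\ zeros at infinity; this is treated by a limiting argument, since real-rootedness is closed under coefficient limits by Hurwitz's theorem. The second is $\deg F<n$, handled the same way. The assumption that $A(0),\dots,A(n)$ are not all zero guarantees $\Jcal_n[A]\not\equiv0$, so the hypothesis is meaningful.

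For the ``only if'' direction, first apply the $n$-sequence to $F=(1+x)^n$. The output is exactly $\Jcal_n[A]$, so it is real-rooted. It remains to exclude zeros of both signs, and I expect this to be the main obstacle. The plan is to test the operator on the real-rooted polynomials $F_k(x)=(1+x)^{n-k}(x-1)^k$, $0\le k\le n$, and on their rescalings $(1+x)^{n-1}(x-t)$ with $t>0$. The outputs are $\Jcal_n[A]\ast F_k$, and all of them must be real-rooted. Suppose $\Jcal_n[A]$ had a positive zero $a$ and a negative zero $-b$. I would track a continuous deformation in $t$ and count sign changes of the coefficients via Descartes' rule, aiming to produce a member of this family with a pair of nonreal zeros.

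The model case is $n=2$ with $A(0)=1$, $A(1)=0$, $A(2)=-1$. There $\Jcal_2[A]=1-x^2$, and $F=(x-1)(x+2)$ yields $-2-x^2$, which has nonreal zeros. The general construction should imitate this example: choose $F$ whose zeros straddle the origin with the ratios tuned to $a$ and $b$. The extreme coefficients of the composition then have the same sign, while the intermediate ones cancel enough to force nonreal zeros. If this direct construction becomes unwieldy, I would fall back on the converse part of the Grace--Walsh--Szeg\H{o} coincidence theorem. This is the argument Craven and Csordas use: composing with all real-rooted $F$ yields real zeros only when the zeros of $\Jcal_n[A]$ lie in a closed half-line, so zeros of both signs are impossible.
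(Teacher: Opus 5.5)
The paper gives no proof of this statement; it quotes it from Craven and Csordas, so your proposal has to stand on its own. Your ``if'' direction does stand. The identity $\sum_i A(i)\binom ni b_i x^i=\Jcal_n[A]\ast F$, the reflection $A(i)\mapsto(-1)^iA(i)$, the Schur--Szeg\H{o} theorem, and the Hurwitz limits for the degree-deficient cases together form a standard, correct argument. For realness alone you can apply Schur--Szeg\H{o} once with the closed upper and once with the closed lower half-plane. Since $-\alpha\ge0$, every zero $-\alpha w$ of the composition then has $\operatorname{Im}$ both $\ge0$ and $\le0$.

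The gap is in the ``only if'' direction, exactly at the step you call the main obstacle: nothing you write excludes a real-rooted $\Jcal_n[A]$ with zeros of both signs. The deformation/Descartes plan is only an intention. Composing with $(1+x)^{n-1}(x-t)$ simply gives $\frac1n\bigl((1+t)\vartheta-nt\bigr)\Jcal_n[A]$. That route would therefore require proving that $(\vartheta-c)\Jcal_n[A]$ has nonreal zeros for some $c\in(0,n)$ whenever the zeros have mixed signs, and you do not prove this. The $n=2$ example is not a general construction. The fallback is circular: the ``converse of Grace--Walsh--Szeg\H{o}'' you invoke is not a standard result, and as you formulate it, it is precisely the implication being proved.

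You can close the gap with two families of real-rooted test polynomials of degree at most $n$.

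\emph{First family.} For $0\le i\le n-2$, the polynomial $x^i(x^2-1)$ is sent to $x^i\bigl(A(i+2)x^2-A(i)\bigr)$. Real-rootedness of this output forces $A(i)A(i+2)\ge0$.

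\emph{Second family.} For $0\le i<j\le n$, the polynomial $x^i(1+x)^{j-i}$ is sent to $x^i\sum_{l=0}^{j-i}\binom{j-i}{l}A(i+l)x^l$. Suppose $A(i)\ne0\ne A(j)$ with $j\ge i+2$ and $A(l)=0$ for $i<l<j$. Then the output is $x^i\bigl(A(i)+A(j)x^{j-i}\bigr)$. This has nonreal zeros if $j-i\ge3$. If $j=i+2$, it forces $A(i)A(j)<0$, contradicting the first family.

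\emph{Conclusion.} The indices with $A(i)\ne0$ therefore form an interval on which $A(i)A(i+2)>0$. So the nonzero values $A(i)$ either all have one sign or strictly alternate in sign. Accordingly $\Jcal_n[A](x)$ or $\Jcal_n[A](-x)$ has coefficients of one sign, not all zero, and hence no positive zero. You already showed that $\Jcal_n[A]=A(\vartheta)(1+x)^n$ is real-rooted. Together these put all zeros in $(-\infty,0]$ or all in $[0,\infty)$.
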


Jensen's theorem produces real-rooted test polynomials from exponential
generating functions.

\begin{theorem}[{\cite[pp.~183--187]{Jensen1913}}]
\label{thm:Jensen}
Suppose that
\(
\Phi(x)=e^{bx}P(x)
=
\sum_{\ell\ge0}
\gamma_\ell\frac{x^\ell}{\ell!},
\)
where $b\in\R$ and $P\in\R[x]$ is a nonzero polynomial having only real
zeros. Then, for every $n\ge0$, the polynomial
\(
\sum_{i=0}^{n}\binom ni\gamma_i x^i
\)
is either identically zero or has only real zeros.
\end{theorem}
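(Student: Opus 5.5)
The plan is to realize the Jensen polynomial, after reversing its coefficients, as a real-rootedness-preserving differential operator applied to a power of a linear form. Write $D=\mathrm d/\mathrm dx$. Since $\Phi$ is entire, $\Phi(D)$ acts on polynomials as a finite sum, and
\[
\Phi(D)x^n=\sum_{\ell=0}^{n}\frac{\gamma_\ell}{\ell!}\,\frac{n!}{(n-\ell)!}x^{n-\ell}
=\sum_{\ell=0}^{n}\binom n\ell\gamma_\ell x^{n-\ell}
=x^n g_n(1/x),
\]
where $g_n(x)=\sum_{i=0}^n\binom ni\gamma_i x^i$ is the polynomial in the statement. On the other hand, $\Phi(D)=P(D)e^{bD}$, and $e^{bD}$ is the shift operator. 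Hence
\[
x^n g_n(1/x)=P(D)(x+b)^n .
\]

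It then suffices to prove two things.

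\emph{(a)} The polynomial $R(x):=P(D)(x+b)^n$ is either zero or real-rooted.

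\emph{(b)} Real-rootedness passes from $R$ back to $g_n$.

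Part (b) is routine. If $R\neq0$, then $g_n(x)=x^nR(1/x)$. The nonzero zeros of $g_n$ are the reciprocals of the nonzero zeros of $R$. Any drop in $\deg R$ below $n$ contributes only the factor $x^{n-\deg R}$ to $g_n$, whose zero is real. Also $R=0$ exactly when $g_n=0$.

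For (a), factor $P(y)=c\prod_{k}(y-a_k)$ with $c\neq0$ and all $a_k$ real. Then $P(D)=c\prod_k(D-a_k)$, where the factors commute. So it is enough to show that each $D-a$ with $a\in\R$ maps a real-rooted polynomial $f$ to a polynomial that is zero or real-rooted; one then iterates starting from $(x+b)^n$.

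For $a=0$ this is Rolle's theorem: $f'$ of a real-rooted $f$ is real-rooted (or zero when $f$ is constant). For $a\neq0$, use
\[
f'-af=e^{ax}\bigl(e^{-ax}f\bigr)'.
\]
Suppose $f\neq0$ has degree $k\ge1$ with real zeros $r_1\le\dots\le r_k$. The function $e^{-ax}f$ vanishes at these zeros, with multiplicity. Rolle's theorem, together with the multiplicity count at repeated zeros, gives $k-1$ zeros of $f'-af$ in $[r_1,r_k]$. Moreover $e^{-ax}f\to0$ as $x\to+\infty$ if $a>0$, and as $x\to-\infty$ if $a<0$. A Rolle argument on the unbounded interval beyond $r_k$ (respectively before $r_1$) then yields one more real zero. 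Since $\deg(f'-af)=k$, all zeros are real. If $k=0$, then $f'-af=-af$ is a nonzero constant, which is fine.

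The main care point is this multiplicity bookkeeping for $D-a$, together with tracking the zero-polynomial and degree-drop cases through the iteration and the reversal. No other step seems delicate.
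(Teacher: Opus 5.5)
Your argument is correct and complete. The paper gives no proof of this statement: it quotes it from Jensen's 1913 memoir and uses it as a black box, with $b=1$ and $P=T_{m,d,\lambda}$, in the proof of \Cref{thm:main}. So the only comparison is with the general result being cited.

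Your route is the classical operator argument. The reversed Jensen polynomial equals $\Phi(D)x^n=P(D)(x+b)^n$. Then $P(D)$ factors into first-order operators $D-a$, each of which preserves real-rootedness by Rolle's theorem applied to $e^{-ax}f$, with one extra zero coming from the unbounded side.

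The bookkeeping you flag is handled correctly.
\begin{itemize}
\item At a zero of $f$ of multiplicity $\mu$, the polynomial $f'-af$ vanishes to order exactly $\mu-1$.
\item For $a\neq0$, the leading coefficient of $f'-af$ is $-a$ times that of $f$, so the degree stays $k$ and the $k$ zeros you locate are all of them.
\item Under reversal, a drop in $\deg R$ only adds zeros of $g_n$ at the origin, and zeros of $R$ at the origin only lower $\deg g_n$.
\item The zero polynomial, once produced, stays zero through the remaining factors.
\end{itemize}

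Your proof is self-contained and needs no limiting argument, because $e^{bD}$ acts on polynomials as an exact translation. Jensen's theorem, as usually stated, covers every function in the Laguerre--P\'olya class, including infinite products and Gaussian factors. There one approximates $\Phi$ locally uniformly by real-rooted polynomials and passes to the limit in the coefficients. The special case you treat is exactly what the paper needs.
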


\subsection{Interlacing criteria}
\label{sec:interlacing-criteria}

We first fix the convention for the direction of interlacing. Let $P$ and $Q$ be
nonzero real-rooted polynomials with positive leading coefficients and degrees
$q$ and $r \in \{q, q+1\}$, respectively. Denote their roots (counting multiplicity)
by $a_1\le\cdots\le a_q$ and $b_1\le\cdots\le b_r$. We write $P\preccurlyeq Q$ if
\(
a_1\le b_1\le\cdots\le a_q\le b_q
\)
when $r=q$, or
\(
b_1\le a_1\le\cdots\le b_q\le a_q\le b_{q+1}
\)
when $r=q+1$. When $q=0$, these correspond to the empty chain and the
single-term chain $b_1$, respectively. Finally, we write $P\prec Q$ if
$P\preccurlyeq Q$, all holding inequalities are strict, and $P$ and $Q$ share no common roots.

We use the Hermite--Kakeya--Obreschkoff theorem.

\begin{theorem}[{\cite[Theorem~1.9]{BorceaBranden2009I};
\cite[Theorem~6.3.8]{RahmanSchmeisser2002}}]
\label{thm:HKO}
Let $P(x),Q(x)\in\R[x]$ be linearly independent nonconstant polynomials.
Every nonzero polynomial in $\operatorname{span}_{\R}\{P,Q\}$ is real-rooted
if and only if $P$ and $Q$ are real-rooted and their zeros interlace.
Moreover, every nonzero polynomial in this span has only simple real zeros if
and only if $P$ and $Q$ have simple, strictly interlacing zeros.
\end{theorem}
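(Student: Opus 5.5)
This statement is a classical result quoted from the literature, so the paper does not need to reprove it. If I were to prove it, I would study the rational function $R=Q/P$ and read both directions off its monotonicity. Throughout, I use that the zeros of $aP+bQ$ with $a\ne0$ are, apart from common zeros of $P$ and $Q$, exactly the solutions of $R(x)=-a/b$, together with the zeros of $P$ when $b=0$.

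\emph{Sufficiency.} Assume first that $P$ and $Q$ have simple zeros and interlace strictly, with no common zeros. Since $\deg Q-\deg P\in\{-1,0,1\}$ after possibly swapping the roles of $P$ and $Q$, the partial-fraction expansion reads
\[
\frac{Q(x)}{P(x)}=\alpha x+\beta+\sum_{k}\frac{r_k}{x-a_k}.
\]
Here $a_k$ are the zeros of $P$ and $r_k=Q(a_k)/P'(a_k)$.
\begin{itemize}
\item Strict interlacing forces $Q(a_k)$ to alternate in sign with $k$, exactly as $P'(a_k)$ does, so all $r_k$ share one sign.
\item Checking the leading terms under the convention of \Cref{sec:interlacing-criteria} shows that $\alpha$ has the opposite sign (or vanishes). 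Hence $R'$ has constant strict sign away from the poles.
\item Therefore $R$ is strictly monotone on each interval between consecutive poles and on the two unbounded intervals, and it runs between $\pm\infty$ (or up to a finite limit at infinity).
\item Consequently, for every $c$ the equation $R(x)=c$ has one simple solution per interval. Counting intervals gives $\deg(Q-cP)$ simple real zeros, and the degree drops exactly when the count of unbounded solutions drops.
\end{itemize}
This proves both the weak and the strict assertions in the simple case. For non-strict interlacing with repeated or common zeros, I would factor out $\gcd(P,Q)$, which is real-rooted, apply the simple case to the cofactors, and handle the remaining degenerate multiplicities by a perturbation that moves the zeros apart. The limit then follows from Hurwitz's theorem, since limits of real-rooted polynomials of bounded degree remain real-rooted.

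\emph{Necessity.} Taking $b=0$ or $a=0$ shows that $P$ and $Q$ are themselves real-rooted. After removing $\gcd(P,Q)$, which does not affect the span condition, I assume the two polynomials are coprime. The key claim is that $R'$ does not change sign on $\R$ minus the poles. Suppose instead that $R$ has a strict local extremum at a non-pole $x_0$, with value $c=R(x_0)$.
\begin{itemize}
\item Then $Q-cP$ has a zero of even multiplicity at $x_0$.
\item For $c'$ slightly beyond the extremal value on the appropriate side, the two nearby real solutions of $R=c'$ disappear.
\item A local degree count via the argument principle shows that $Q-c'P$ then acquires a pair of nonreal zeros near $x_0$, which is a contradiction.
\end{itemize}
The same argument applied to $P/Q$, together with an inspection of the behaviour at $\pm\infty$, shows the following.
\begin{itemize}
\item $R$ is monotone between consecutive poles.
\item Between two consecutive zeros of $P$, the function $R$ must cross $0$ exactly once, so zeros of $Q$ and $P$ alternate.
\item The degree constraint $|\deg P-\deg Q|\le1$ is forced, with the correct endpoint pattern.
\end{itemize}
This is exactly interlacing.

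For the strict version: if every combination has only simple zeros, then $P$ and $Q$ have simple zeros. No common zero can occur, since a common zero $x_0$ would make a suitable combination vanish to order at least two at $x_0$. Moreover $R'$ cannot vanish anywhere, since $R'(x_0)=0$ at a non-pole would give $Q-R(x_0)P$ a double zero. This yields strict interlacing, and the converse is already contained in the sufficiency argument.

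\emph{Main obstacle.} The delicate step is the local argument in the necessity direction that converts an extremum of $R$ into nonreal zeros, together with the bookkeeping of degree drops at $\pm\infty$, where $\deg(Q-cP)$ may fall. I would handle the latter by working projectively, treating zeros at infinity as admissible real zeros, which matches the ``zero polynomial or real-rooted'' convention.
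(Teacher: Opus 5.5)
The paper does not prove this statement; it only cites it from the literature, so your sketch has to be judged on its own. Your sufficiency argument, via the common sign of the partial-fraction residues of $Q/P$ and the opposite sign of the linear coefficient, is the standard one and is correct. In the weak case you do not even need a perturbation: after removing $\gcd(P,Q)$, two weakly interlacing coprime polynomials automatically have simple, strictly interlacing zeros. Your strict half of necessity is also fine, because there you use that \emph{any} zero of $R'$ at a non-pole gives a double zero of $Q-R(x_0)P$.

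The gap is in the weak half of necessity. There your only tool is that $R=Q/P$ and $P/Q$ have no strict local extremum, i.e.\ that $R'$ never changes sign. This detects critical points where $Q-cP$ vanishes to even order. It does not detect those of odd order $k\ge3$, which include multiple zeros of $P$ or $Q$. Yet your final step (``$R$ crosses $0$ exactly once, so the zeros alternate'') tacitly assumes all zeros are simple. For example, $P=x^3$ and $Q=(x+1)(x-1)^3$ are real-rooted and coprime, and
\[
\frac{\mathrm d}{\mathrm dx}\,\frac{Q}{P}=\frac{(x-1)^2(x^2+2x+3)}{x^4}\ge0 .
\]
So neither $Q/P$ nor $P/Q$ has a strict local extremum, $Q/P\sim x$ at $\pm\infty$, and every conclusion you list holds. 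But the zeros $0,0,0$ and $-1,1,1,1$ do not interlace, and indeed $Q-cP$ has nonreal zeros near $1$ for every small $c\ne0$.

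The repair is to run your Hurwitz argument at every real critical point. Suppose $Q-cP$ has a zero of odd order $k\ge3$ at a non-pole $x_0$. Then $R$ is strictly monotone near $x_0$, so for $c'$ close to $c$ at most one of the $k$ zeros of $Q-c'P$ near $x_0$ is real. Applying this to $R$ and to $P/Q$ gives $R'\ne0$ off the poles and simple zeros of $P$ and $Q$. After that, your alternation and endpoint count are valid.

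Finally, to pass from the coprime cofactors back to $P$ and $Q$, you should state explicitly that multiplying two interlacing polynomials by a common real-rooted factor preserves interlacing. This holds, for instance, because it leaves the difference of their zero-counting functions unchanged.
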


For strict interlacing, the basic step is the action of one linear factor on a
polynomial with negative zeros.

\begin{lemma}\label{lem:Euler-one-factor}
Let $R$ have degree $n\ge1$ and distinct negative zeros. For every $r\in\R$,
the polynomial $(\vartheta-r)R$ is nonzero and has only simple real zeros. If
$r\notin[0,n]$, it has degree $n$ and all its zeros are negative.
\end{lemma}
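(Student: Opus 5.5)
Write $R(x)=c\prod_{k=1}^n(x+\rho_k)$ with $c\ne0$ and distinct $0<\rho_1<\cdots<\rho_n$. We have
\[
(\vartheta-r)R(x)=xR'(x)-rR(x).
\]
The plan is to study the sign behaviour of this expression at the zeros of $R$ and at the endpoints, and to count real zeros by the intermediate value theorem. An equivalent route, which I expect to use for bookkeeping, works with the logarithmic derivative. On the set where $R\ne0$,
\[
\frac{(\vartheta-r)R(x)}{R(x)}=\phi(x)-r,
\qquad
\phi(x)=\sum_{k=1}^n\frac{x}{x+\rho_k}.
\]
The zeros of $(\vartheta-r)R$ away from the zeros of $R$ are exactly the solutions of $\phi(x)=r$. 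Each zero $-\rho_k$ of $R$ is not a zero of $(\vartheta-r)R$, because at $x=-\rho_k$ we get $-\rho_kR'(-\rho_k)\ne0$ by simplicity.

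\textbf{Degree and nonvanishing.} The leading coefficient of $(\vartheta-r)R$ is $c(n-r)$ and its constant term is $-rR(0)$, where $R(0)\ne0$. Hence the polynomial is never zero: if $r=n$ the constant term $-nR(0)$ is nonzero, and otherwise the leading coefficient is nonzero. It has degree exactly $n$ when $r\ne n$.

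\textbf{Counting zeros.} Each summand $x/(x+\rho_k)=1-\rho_k/(x+\rho_k)$ is strictly increasing on every interval avoiding $-\rho_k$. Therefore $\phi$ is strictly increasing on each of the $n+1$ intervals
\[
(-\infty,-\rho_n),\;(-\rho_n,-\rho_{n-1}),\;\ldots,\;(-\rho_1,\infty).
\]
The boundary behaviour is as follows.
\begin{itemize}
\item On $(-\infty,-\rho_n)$, $\phi$ increases from $n$ (its limit at $-\infty$) to $+\infty$.
\item On each bounded middle interval, $\phi$ increases from $-\infty$ to $+\infty$.
\item On $(-\rho_1,\infty)$, $\phi$ increases from $-\infty$ to $n$, passing the value $0$ at $x=0$.
\end{itemize}
So the equation $\phi=r$ has exactly one solution in each of the $n-1$ middle intervals. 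It has one further solution in $(-\infty,-\rho_n)$ if $r>n$, and one in $(-\rho_1,\infty)$ if $r<n$.

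This gives $n$ distinct real solutions when $r\ne n$, matching the degree $n$, so all zeros are real and simple. When $r=n$ the degree drops to $n-1$, and the $n-1$ middle solutions account for all zeros, again simple and real.

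\textbf{Negativity.} Suppose $r\notin[0,n]$. If $r>n$, all $n$ solutions lie in $(-\infty,-\rho_1)$, so they are negative. If $r<0$, the solution in $(-\rho_1,\infty)$ satisfies $\phi(x)=r<0=\phi(0)$, so by monotonicity it lies in $(-\rho_1,0)$ and is negative. The middle solutions are negative automatically.

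The only delicate point is the boundary case $r=n$ together with the end-interval bookkeeping. Both are handled by the limits of $\phi$ at $\pm\infty$ listed above.
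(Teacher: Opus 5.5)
Your proposal is correct and follows essentially the same route as the paper: both reduce the zeros of $(\vartheta-r)R$ to the solutions of $xR'(x)/R(x)=r$, and both count these solutions interval by interval. The counting uses the strict monotonicity of this logarithmic-derivative function together with its limits at the poles and at $\pm\infty$. The differences are only cosmetic. You keep $(-\rho_1,\infty)$ as one interval, so $r=0$ needs no separate case, whereas the paper treats $r=0$ separately. You obtain simplicity by matching the number of distinct zeros with the degree, after checking the leading and constant coefficients, whereas the paper uses $P'(\xi)=R(\xi)\Phi'(\xi)\neq0$.
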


\begin{proof}
Write the zeros of $R$ as $\rho_1<\cdots<\rho_n<0$ and put
$P=(\vartheta-r)R$. If $r=0$, then $P=xR'$, whose zeros are simple by the
strict interlacing of $R'$ and $R$; the zero at the origin is simple because
\(
\frac{R'(0)}{R(0)}
=\sum_{\nu=1}^{n}\frac1{-\rho_\nu}>0.
\)
Assume that $r\ne0$. Since
$P(\rho_j)=\rho_jR'(\rho_j)\ne0$, its zeros are the solutions of
$\Phi(x)=r$, where
\(
\Phi(x)=\frac{xR'(x)}{R(x)}
=\sum_{\nu=1}^{n}\frac{x}{x-\rho_\nu},
\Phi'(x)=\sum_{\nu=1}^{n}
\frac{-\rho_\nu}{(x-\rho_\nu)^2}>0.
\)
The function $\Phi$ maps each $(\rho_j,\rho_{j+1})$ onto $\R$, giving
$n-1$ simple negative zeros. Its ranges on
$(-\infty,\rho_1)$, $(\rho_n,0)$, and $(0,\infty)$ are
$(n,\infty)$, $(-\infty,0)$, and $(0,n)$, respectively. These ranges give an
additional real zero unless $r=n$; in that case the leading term of $P$
vanishes and the $n-1$ zeros already found exhaust its degree. Every zero
$\xi$ is simple because $P'(\xi)=R(\xi)\Phi'(\xi)\ne0$. In particular, the
additional zero is negative when $r<0$ or $r>n$.
\end{proof}

The Wronskian detects weak interlacing. In the strict case, its fixed sign makes
the quotient strictly monotone, so each nonzero linear combination has at most
one zero on any interval containing no zero of the denominator.

\begin{lemma}
\label{lem:strict-interlacing-quotient}
Let $P$ and $Q$ be nonconstant real-rooted polynomials with positive leading
coefficients and weakly interlacing zeros, in either order. Then their
Wronskian $W[P,Q]=P'Q-PQ'$ is either nonnegative on $\R$ or nonpositive on
$\R$. If their zeros are
distinct and strictly interlace, then the Wronskian is either positive on
$\R$ or negative on $\R$. In the strict case, $P/Q$ is strictly monotone on
every interval containing no zero of $Q$. In particular, every nonzero linear
combination of $P$ and $Q$ has at most one zero in such an interval.
\end{lemma}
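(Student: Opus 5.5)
Write $W=W[P,Q]=P'Q-PQ'$. On any interval avoiding the zeros of $Q$ we have $(P/Q)'=W/Q^2$. The plan is to prove the sign statements for $W$ first and then read off the monotonicity and the zero count.

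\emph{Reduction to a partial-fraction identity.} By the convention of \Cref{sec:interlacing-criteria}, the degrees differ by at most one. Interlacing is symmetric up to swapping roles, and $W[Q,P]=-W[P,Q]$, so it suffices to treat the case $\deg Q\in\{\deg P,\deg P+1\}$ with the roots ordered as in the definition of $P\preccurlyeq Q$. First remove common roots. If $c$ is a common root, write $P=(x-c)P_1$ and $Q=(x-c)Q_1$. Then $W[P,Q]=(x-c)^2W[P_1,Q_1]$, and $P_1,Q_1$ still weakly interlace, since deleting a matched pair from the interlacing chain preserves the pattern. Hence the sign of $W$ is unchanged off a finite set, and by induction we may assume $P,Q$ share no roots. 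In particular, in the strict case there is nothing to remove. Now let $b_1<\dots<b_r$ be the roots of $Q$, which are distinct by interlacing once common roots are gone. The interlacing pattern forces $P$ to have exactly one sign change of $P/Q'$ pattern, and the partial-fraction expansion takes the form
\[
\frac{P(x)}{Q(x)}=\alpha x+\beta+\sum_{k=1}^{r}\frac{c_k}{x-b_k},\qquad c_k=\frac{P(b_k)}{Q'(b_k)},
\]
with $\alpha\ge0$; here $\alpha=0$ when $\deg P\le\deg Q$.

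\emph{The main step} is to show that all $c_k$ have the same sign. The sign of $P(b_k)$ alternates with $k$, because exactly one root of $P$ lies strictly between consecutive roots of $Q$. The sign of $Q'(b_k)$ also alternates, because the roots of $Q$ are simple. So $c_k$ has constant sign, and checking the largest root with both leading coefficients positive fixes that sign: $c_k<0$ when $r=q+1$, and $c_k>0$ when $r=q$, in which case $b_q>a_q$ gives $P(b_q)>0$ and $Q'(b_q)>0$. Differentiating the expansion gives
\[
\frac{W}{Q^2}=\alpha-\sum_k\frac{c_k}{(x-b_k)^2}.
\]
Consider the case $r=q+1$, where $\alpha=0$ and all $c_k<0$. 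Then $W/Q^2>0$ off the roots, so $W\ge0$ on $\R$ by continuity, and in fact $W>0$ off the roots of $Q$. At a root $b_k$ we have $W(b_k)=-P(b_k)Q'(b_k)$, which is nonzero precisely when $P(b_k)\ne0$, that is, under strict interlacing. The case $r=q$ is symmetric, with $W<0$.

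In the weak case, reinserting the common roots introduces the factors $(x-c)^2$, so $W$ is only semidefinite. In the strict case, positivity (or negativity) everywhere also needs checking at roots of $P$: there $W(a)=P'(a)Q(a)\ne0$, since $Q(a)\ne0$ and the roots of $P$ are simple.

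\emph{Conclusion.} Since $(P/Q)'=W/Q^2$ has constant strict sign on any interval $I$ free of zeros of $Q$, $P/Q$ is strictly monotone on $I$. Let $\mu P+\nu Q\ne0$. If $\mu=0$, then $\nu Q$ has no zeros in $I$. Otherwise its zeros in $I$ are the solutions of $P/Q=-\nu/\mu$, and there is at most one by strict monotonicity. I expect the only delicate point to be the bookkeeping of the sign of the $c_k$ in the two degree cases and the weak case with coinciding roots. The reduction by removing common factors handles the latter cleanly.
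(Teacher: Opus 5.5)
Your proof is correct in substance, but it follows a different route from the paper. The paper does not compute anything. It cites the Wronskian form of the Hermite--Biehler theorem to get the semidefinite statement. For nonvanishing in the strict case it argues by contradiction: if $W[P,Q](x_0)=0$, then $Q(x_0)P-P(x_0)Q$ is a nonzero polynomial with a double zero at $x_0$, contradicting the strict part of \Cref{thm:HKO}.

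You instead give the classical self-contained argument:
\begin{itemize}
\item Cancel common zeros; each contributes a factor $(x-c)^2$ to $W$. Deleting a common zero preserves the pattern because a common value always fills two adjacent positions of the chain.
\item Note that the reduced pair then interlaces strictly and has simple zeros.
\item Show that the residues $c_k=P(b_k)/Q'(b_k)$ all have one sign. Then $(P/Q)'=-\sum_k c_k/(x-b_k)^2$ has a strict constant sign off the zeros of $Q$, and $W(b_k)=-P(b_k)Q'(b_k)\ne0$.
\end{itemize}
This avoids both cited theorems and treats the weak and strict cases in one computation, at the cost of some bookkeeping. The paper's route is shorter but leans on the literature.

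There is one slip in the sign bookkeeping. When $r=q+1$, the largest zero $b_{q+1}$ of $Q$ still exceeds every zero of $P$, so $P(b_{q+1})>0$ and $Q'(b_{q+1})>0$. Hence $c_k>0$ in both degree cases, and $W[P,Q]<0$ whenever $P\preccurlyeq Q$. For example, $P=x$ and $Q=x^2-1$ give $c_1=c_2=\tfrac12$ and $W=-x^2-1$. So your claims that $c_k<0$ and $W>0$ when $r=q+1$, and that the two degree cases have opposite signs, are wrong. This is harmless, since the lemma only asserts that the sign is constant.

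Three smaller tidy-ups:
\begin{itemize}
\item Delete the garbled sentence about ``one sign change of $P/Q'$ pattern''.
\item Mention the degenerate case where every zero cancels, so $P$ is a positive multiple of $Q$. Then $W\equiv0$, which is consistent with the weak assertion.
\item Your separate check at the zeros of $P$ is redundant. The partial-fraction formula already shows $W/Q^2\ne0$ at every point that is not a zero of $Q$.
\end{itemize}
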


\begin{proof}
The Wronskian form of the Hermite--Biehler theorem
\cite[Theorem~6.3.4]{RahmanSchmeisser2002} shows that $W[P,Q]$ is nonnegative
everywhere or nonpositive everywhere. In the strict case, a zero $x_0$ of the
Wronskian would make the nonzero polynomial
$Q(x_0)P-P(x_0)Q$ have a double zero at $x_0$, contrary to the strict part of
\Cref{thm:HKO}. Thus the Wronskian never vanishes, and
$(P/Q)'=W[P,Q]/Q^2$ has a fixed nonzero sign away from the zeros of $Q$.
If $uP+vQ$ has two zeros in such an interval and $u\ne0$, then
$P/Q=-v/u$ at two distinct points, a contradiction. If $u=0$, the combination
is a nonzero multiple of $Q$ and has no zero in the interval.
\end{proof}

The action of a linear factor and quotient monotonicity combine to give the
criterion used when two Euler operators act on the same polynomial $H(x)$.

\begin{lemma}\label{lem:common-H-interlacing}
Let $H(x)$ have degree $q\ge1$ and distinct negative zeros. Let
$A,B\in\R[y]$ be linearly independent polynomials with simple strictly
interlacing real zeros.
Suppose that $B$ has no zero in $[0,q]$ and that
$A(\vartheta)H(x)$ and $B(\vartheta)H(x)$ are nonconstant. Then
\(
A(\vartheta)H(x)
\text{ and }
B(\vartheta)H(x)
\)
have simple strictly interlacing real zeros.
\end{lemma}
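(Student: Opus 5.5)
The plan is to use the strict part of \Cref{thm:HKO} in the direction ``simple real zeros for every combination $\Rightarrow$ strict interlacing.'' Put $P=A(\vartheta)H(x)$ and $Q=B(\vartheta)H(x)$. For $(u,v)\ne(0,0)$ write $C=uA+vB$. Then
\[
uP+vQ=C(\vartheta)H(x),
\]
so it suffices to show two things: every such $C(\vartheta)H(x)$ is nonzero with only simple real zeros, and $P,Q$ are linearly independent. Linear independence follows from the first point, since a relation $uP+vQ=0$ would make some $C(\vartheta)H$ vanish. $P$ and $Q$ are nonconstant by hypothesis, and we may rescale them to have positive leading coefficients.

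\textbf{Step 1: $C$ has at most one zero in $[0,q]$.} By \Cref{thm:HKO} applied to $A,B$, every nonzero $C$ in their span has only simple real zeros. Moreover, $C$ and $B$ strictly interlace whenever $C$ is not a multiple of $B$. To see this, note that $\operatorname{span}\{C,B\}=\operatorname{span}\{A,B\}$ and apply \Cref{thm:HKO} again. If $A$ or $B$ is constant, interlacing forces degrees at most one and the claim is immediate.

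Strict interlacing means each open interval between consecutive zeros of $B$, including the two unbounded ones, contains at most one zero of $C$. Since $B$ has no zero in $[0,q]$, the interval $[0,q]$ lies inside one such interval. Hence $C$ has at most one zero $r_0\in[0,q]$. If $C$ is a multiple of $B$, it has no zero in $[0,q]$ at all.

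\textbf{Step 2: apply the factors of $C$ one at a time.} Factor
\[
C(y)=c\,(y-r_0)^{\epsilon}\prod_k (y-r_k),
\]
with $c\ne0$, $\epsilon\in\{0,1\}$, and all $r_k\notin[0,q]$. The operators $\vartheta-r$ commute, so we may apply the factors $(\vartheta-r_k)$ first.

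By \Cref{lem:Euler-one-factor}, each such factor takes a polynomial of degree $q$ with distinct negative zeros to another polynomial of degree $q$ with distinct negative zeros. The degree stays exactly $q$ at every stage, so the condition $r_k\notin[0,q]$ is always the relevant one.

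Finally, if $\epsilon=1$, we apply $(\vartheta-r_0)$. The first assertion of \Cref{lem:Euler-one-factor} shows the result is nonzero with only simple real zeros. Multiplying by $c$ changes nothing.

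\textbf{Conclusion and main difficulty.} Steps 1 and 2 together show that every nonzero combination $uP+vQ$ has only simple real zeros. The strict part of \Cref{thm:HKO} then gives that $P$ and $Q$ have simple, strictly interlacing real zeros.

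The main obstacle is Step 1, the localisation of the zeros of $C$. Iterating \Cref{lem:Euler-one-factor} only works while negativity of the zeros is preserved. That requires all but at most one factor of $C$ to have its root outside $[0,q]$, and this is exactly where the hypothesis on $B$ enters, through strict interlacing within the pencil.
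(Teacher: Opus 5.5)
Your proposal is correct and follows the paper's proof essentially step for step. You show that every nonzero $C\in\operatorname{span}_{\R}\{A,B\}$ has at most one zero in $[0,q]$, apply the commuting factors $\vartheta-r$ with $r\notin[0,q]$ first via \Cref{lem:Euler-one-factor}, and finish with the strict part of \Cref{thm:HKO}, exactly as the paper does. The only difference is in Step~1: you obtain ``at most one zero in $[0,q]$'' by applying \Cref{thm:HKO} to the pair $(C,B)$ and counting zeros in the gaps of $B$, whereas the paper uses the strict monotonicity of $A/B$ from \Cref{lem:strict-interlacing-quotient}. Both routes are valid; the one case your HKO step does not literally cover, a nonzero constant $C$ (possible only when $\deg A=\deg B=1$), is trivial because such a $C$ has no zeros.
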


\begin{proof}
We first show that every nonzero polynomial in
$\operatorname{span}_{\R}\{A,B\}$ has simple real zeros and at most one zero
in $[0,q]$. If one of $A$ and $B$ is constant, strict interlacing and linear
independence force the other to have degree one, and both assertions are
immediate. Otherwise, \Cref{thm:HKO} shows that every nonzero polynomial in
the span has simple real zeros. Choose
$\varepsilon_A,\varepsilon_B\in\{-1,1\}$ so that $\varepsilon_AA$ and
$\varepsilon_BB$ have positive leading coefficients. Their zeros still
strictly interlace. Since $B$ has no zero in $[0,q]$,
\Cref{lem:strict-interlacing-quotient} shows that
$(\varepsilon_AA)/(\varepsilon_BB)$ is strictly monotone on $[0,q]$. This
quotient differs from $A/B$ by a nonzero constant factor, so $A/B$ is also
strictly monotone there. Hence every nonzero polynomial in the span has at
most one zero in $[0,q]$.

Fix a nonzero $C\in\operatorname{span}_{\R}\{A,B\}$ and write
$C(y)=c\prod_{\nu=1}^{s}(y-r_\nu)$. If $s=0$, then
$C(\vartheta)H(x)=cH(x)$ already has distinct negative zeros, so we may assume that
$s\ge 1$. Its roots are real and distinct. Since
the operators $\vartheta-r_\nu$ commute, apply first the factors with
$r_\nu\notin[0,q]$. By \Cref{lem:Euler-one-factor}, every such factor
preserves the degree $q$ and leaves distinct negative zeros. At most one
factor remains, and the same lemma shows that its application, if needed,
produces a nonzero polynomial with simple real zeros. Thus every nonzero
$C(\vartheta)H(x)$ has simple real zeros.

The map $C\mapsto C(\vartheta)H(x)$ is therefore injective on
$\operatorname{span}_{\R}\{A,B\}$. Its images of $A$ and $B$ are linearly
independent, and every nonzero polynomial in their span has simple real zeros.
The strict part of \Cref{thm:HKO} therefore gives the required interlacing.
\end{proof}

When two resulting polynomials have the same degree, the following elementary
criterion determines their order.

\begin{lemma}\label{lem:interlacing-direction}
Let $P$ and $Q$ have the same positive degree, constant term $1$, and simple
negative zeros. Suppose that their zeros strictly interlace. Then
\(
P\prec Q
        \)
if and only if
        \(
        P'(0)<Q'(0).
\)
\end{lemma}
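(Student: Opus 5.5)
The plan is to reduce the comparison of interlacing order to a comparison of the smallest-in-absolute-value data, namely the logarithmic derivatives at the origin. Write the zeros of $P$ as $a_1<\cdots<a_q<0$ and those of $Q$ as $b_1<\cdots<b_q<0$. Since both have constant term $1$, we have
\[
P(x)=\prod_{\nu=1}^{q}\Bigl(1-\frac{x}{a_\nu}\Bigr),\qquad
Q(x)=\prod_{\nu=1}^{q}\Bigl(1-\frac{x}{b_\nu}\Bigr).
\]
Hence
\[
P'(0)=\sum_{\nu}\frac{1}{-a_\nu},\qquad Q'(0)=\sum_{\nu}\frac{1}{-b_\nu}.
\]
Note that for equal degrees, strict interlacing with no common roots means exactly one of two chains holds: either $a_1<b_1<a_2<\cdots<a_q<b_q$, which is $P\prec Q$, or $b_1<a_1<\cdots<b_q<a_q$, which is $Q\prec P$.

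The first step is the forward direction. If $P\prec Q$, then $a_\nu<b_\nu<0$ for every $\nu$. Thus $-a_\nu>-b_\nu>0$, and so $1/(-a_\nu)<1/(-b_\nu)$ termwise. Summing gives $P'(0)<Q'(0)$.

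The second step is the converse. If $P\prec Q$ fails, then by the dichotomy above $Q\prec P$ holds. The same termwise argument with the roles swapped gives $Q'(0)<P'(0)$, so $P'(0)<Q'(0)$ fails. This establishes the equivalence.

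The only point needing care is justifying the dichotomy. Strict interlacing of two degree-$q$ polynomials with simple zeros and no common zeros means the $2q$ zeros alternate between the two sets. Alternation of $2q$ points split evenly into two sets forces one of exactly two patterns, according to which polynomial owns the smallest zero. I expect no genuine obstacle here. The main subtlety is matching this with the paper's convention for $\prec$ in the case $r=q$, which places $a_1$ first.
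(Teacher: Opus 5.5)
Your proof is correct and is essentially the paper's argument: write $P'(0)=\sum 1/(-a_i)$ and $Q'(0)=\sum 1/(-b_i)$, compare termwise when $a_i<b_i$, and use the reverse interlacing order for the converse. Your explicit justification of the two-pattern dichotomy only spells out what the paper leaves implicit. The same holds for the observation that both polynomials have positive leading coefficients, so the paper's interlacing convention applies.
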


\begin{proof}
Write the zeros as
\(
a_1<\cdots<a_q<0,
b_1<\cdots<b_q<0.
\)
Since $P(0)=Q(0)=1$,
\(
P'(0)=\sum_{i=1}^{q}\frac1{-a_i},
Q'(0)=\sum_{i=1}^{q}\frac1{-b_i}.
\)
If $P\prec Q$, then $a_i<b_i$ for every $i$, so $P'(0)<Q'(0)$. The reverse
interlacing order gives the reverse inequality.
\end{proof}

The strict interlacing of the $\gamma$-polynomials will be transferred to the
$Z$-polynomials through a palindromic transformation. Hoster and Stump proved
that palindromic polynomials of consecutive degrees with nonnegative
coefficients weakly interlace if and only if their $\gamma$-polynomials do
\cite[Proposition~2.5]{HosterStump2025}.

\begin{proposition}
\label{prop:transformation-preserves-interlacing}
Let $k\ge2$, and let $G_k,G_{k+1}$ have positive leading coefficients and
distinct negative real zeros. Suppose
$\deg G_j=\lfloor j/2\rfloor$ for $j=k,k+1$. Define
\(
Q_j(x)=(1+x)^jG_j\!\left(\frac{x}{(1+x)^2}\right).
\)
Then each $Q_j$ has $j$ distinct negative real zeros.
        Furthermore, 
\(
G_k(x)\prec G_{k+1}(x)
        \)
if and only if 
        \(
        Q_k(x)\prec Q_{k+1}(x).
\)
\end{proposition}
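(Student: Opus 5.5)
The plan is to use the substitution $t=x/(1+x)^2$, which maps $(-\infty,0)$ into itself. On $(-\infty,-1)$ it is a bijection onto $(-\infty,0)$. On $(-1,0)$ it is also a bijection onto $(-\infty,0)$. Under it, $x$ and $1/x$ have the same image, and $x=-1$ is sent to $-\infty$.

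\textbf{First claim: $Q_j$ has $j$ distinct negative zeros.} Let $\tau_1<\cdots<\tau_{g}<0$ be the zeros of $G_j$, where $g=\lfloor j/2\rfloor$. Each $\tau_i$ has exactly two preimages $x_i^-<-1<x_i^+<0$, with $x_i^-x_i^+=1$. Both are simple zeros of $Q_j$, because $t'(x)=(1-x)/(1+x)^3\ne0$ for $x\ne\pm1$. If $j$ is odd, there is one more zero, at $x=-1$. To see this, note that $(1+x)^{2g}G_j(x/(1+x)^2)$ is a polynomial whose value at $x=-1$ is the leading coefficient of $G_j$ times $(-1)^g$, which is nonzero. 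Hence $Q_j$ has a simple zero at $-1$ exactly when $j$ is odd. This gives $2g+\delta=j=\deg Q_j$ distinct negative zeros.

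\textbf{Ordering the zeros.} Since $t$ is increasing on $(-\infty,-1)$, the zeros of $Q_j$ in $(-\infty,-1)$ are $x_1^-<\cdots<x_g^-$. Since $t$ is decreasing on $(-1,0)$, the zeros in $(-1,0)$ are $x_g^+<\cdots<x_1^+$. So the full ordered list of zeros of $Q_j$ is
\[
x_1^-<\cdots<x_g^-<(-1\text{ if }j\text{ odd})<x_g^+<\cdots<x_1^+.
\]

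\textbf{Comparing $k$ and $k+1$.} Let $\sigma_i$ be the zeros of $G_k$ and $\tau_i$ those of $G_{k+1}$, with preimages $a_i^\pm$ and $b_i^\pm$ respectively. Consider the case $k=2g$ even. Then $G_k$ and $G_{k+1}$ both have degree $g$, and $G_k\prec G_{k+1}$ means $\sigma_1<\tau_1<\cdots<\sigma_g<\tau_g$. Applying the monotone maps, this becomes
\[
b_1^-<a_1^-<\cdots<b_g^-<a_g^-<-1<a_g^+<b_g^+<\cdots<a_1^+<b_1^+.
\]
This is exactly the chain required for $Q_k\prec Q_{k+1}$ with $\deg Q_{k+1}=\deg Q_k+1$: the extra zero of $Q_{k+1}$ at $-1$ sits in the middle, and the two sides have reversed orientations. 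Consider next the case $k=2g+1$ odd. Then $\deg G_{k+1}=g+1$, and $G_k\prec G_{k+1}$ reads $\tau_1<\sigma_1<\cdots<\sigma_g<\tau_{g+1}$. Here $-1$ is a zero of $Q_k$, and the preimages interleave in the corresponding way. Working through the same-degree convention would confirm the directions, but I would double-check them against the definition of $\preccurlyeq$.

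\textbf{Converse.} Every step above is an equivalence. Each side of $-1$ is order-isomorphic to the $t$-line, and any interlacing chain of zeros of $Q$'s is symmetric under $x\mapsto1/x$, since each $Q_j$ is palindromic. So the chain for the $Q$'s restricted to $(-\infty,-1)$ recovers the chain for the $G$'s. Strictness and the absence of common zeros transfer because the maps are bijections and $-1$ is never the image of a finite $t$.

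The main obstacle is the careful bookkeeping of directions in the odd-$k$ case. There I would verify the chain directly by listing the zeros of both $Q$'s in order. Alternatively, I would invoke \cite[Proposition~2.5]{HosterStump2025} for the weak version and then argue only the strictness separately, by the bijectivity noted above.
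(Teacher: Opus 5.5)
Your strategy of following the zeros through the two monotone branches of $t=x/(1+x)^2$ can work. Corrected, it would be a self-contained alternative to the paper, which takes the weak equivalence from Hoster--Stump and only checks strictness via common zeros; your closing fallback is exactly the paper's route. However, the orientation step on which the whole comparison rests is wrong.

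From your own formula $t'(x)=(1-x)/(1+x)^3$, the map $t$ is \emph{decreasing} on $(-\infty,-1)$ and \emph{increasing} on $(-1,0)$, and $t\to-\infty$ as $x\to-1$. So the most negative zeros of $G_j$ lift to the pairs nearest $-1$. The correct order is $x_g^-<\cdots<x_1^-<-1<x_1^+<\cdots<x_g^+$, the reverse of your list.

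Your chain for even $k$ does not follow from what you asserted. With $t$ increasing on the left branch, $\sigma_1<\tau_1$ would give $a_1^-<b_1^-$, not $b_1^-<a_1^-$. Following your orientation consistently produces $\cdots<a_g^-<b_g^-<-1<b_g^+<a_g^+<\cdots$, which has three consecutive zeros of $Q_{k+1}$. As written, the argument would refute the proposition rather than prove it. Your displayed chain has the right alternation pattern only because it equals the true chain after relabeling $i\mapsto g+1-i$.

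The odd case is left unverified. The ``same-degree convention'' is irrelevant there: $Q_k$ and $Q_{k+1}$ always have consecutive degrees. It is $G_k,G_{k+1}$ that have equal degree, and only when $k$ is even.

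With the correct orientation the chains are
\begin{gather*}
b_g^-<a_g^-<\cdots<b_1^-<a_1^-<-1<a_1^+<b_1^+<\cdots<a_g^+<b_g^+\qquad(k=2g),\\
b_{g+1}^-<a_g^-<b_g^-<\cdots<a_1^-<b_1^-<-1<b_1^+<a_1^+<\cdots<a_g^+<b_{g+1}^+\qquad(k=2g+1).
\end{gather*}
Here $-1$ is a zero of $Q_{k+1}$ in the first case and of $Q_k$ in the second. In both, the zeros of $Q_{k+1}$ are outermost, as $Q_k\prec Q_{k+1}$ requires.

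The fact you need to state explicitly is this: the smallest zero among those of $G_k$ and $G_{k+1}$ belongs to $G_k$ exactly when $k$ is even, i.e.\ exactly when $-1$ is a zero of $Q_{k+1}$. Its two lifts therefore flank $-1$ from the other polynomial, which keeps the middle of the chain alternating.

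The same fact drives the converse. Restricting a strictly alternating $Q$-chain to $(-1,0)$ and pulling back along the increasing branch gives strict alternation of the $G$-zeros, with no common zeros. When $k$ is even (equal $G$-degrees), the direction is forced: the successor of $-1$ in the $Q$-chain must be a zero of the polynomial that does not vanish at $-1$. Palindromicity does not do this work, and ``every step is an equivalence'' needs this argument behind it.

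Finally, record that $Q_j$ has leading coefficient $G_j(1)>0$, because all coefficients of $G_j$ are positive. This matters since $\prec$ is only defined for polynomials with positive leading coefficients.
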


\begin{proof}
Each $G_j$ has positive coefficients. Hence
\(
Q_j(x)=\sum_i [y^i]G_j(y)\,x^i(1+x)^{j-2i}
\)
is palindromic of degree $j$, belongs to $\R_{\ge0}[x]$, and has
$\gamma$-polynomial $G_j(x)$. The cited proposition therefore gives the weak
equivalence.

Every zero $a<0$ of $G_j$ lifts through $x/(1+x)^2=a$ to two distinct
reciprocal negative zeros, neither equal to $-1$. The degree assumption on
$G_j$ shows that these are all zeros of $Q_j$ when $j$ is even; when $j$ is
odd, $Q_j$ has the additional simple zero $-1$. Thus every $Q_j$ has $j$
distinct negative zeros. Exactly one of $Q_k,Q_{k+1}$ has the zero $-1$.
Every common zero of $Q_k$ and $Q_{k+1}$ therefore descends under
$x\mapsto x/(1+x)^2$, and each common zero of $G_k$ and $G_{k+1}$ lifts to
common zeros of $Q_k$ and $Q_{k+1}$. Hence one pair has a common zero if and
only if the other does. For polynomials with simple zeros, weak interlacing is
strict exactly when the two polynomials have no common zero. Combining this
criterion with the equivalence for weak interlacing proves the result.
\end{proof}

\section{Real-rootedness of the \texorpdfstring{$Z$}{Z}-polynomials of
sparse paving matroids}
\label{sec:real-rootedness}

Let $M$ be a sparse paving matroid of rank $d\ge1$ and positive corank $m$,
with $\lambda$ circuit-hyperplanes. By \eqref{eq:Gamma-Euler}, its
$\gamma$-polynomial is $N_{m,d,\lambda}(\vartheta)H_{m,d}(x)$. Since
$H_{m,d}(x)$ already has distinct negative zeros, it suffices to show that
$N_{m,d,\lambda}(\vartheta)$ preserves real-rootedness in the required degree.
For $d\ge6$, rather than study this operator directly, we prove that the
Poisson transform of $N_{m,d,\lambda}(y)$ is real-rooted. The real-rootedness
of this transform, together with $\gamma$-positivity, Jensen's theorem, and the
criterion for $n$-sequences, gives the required preservation property.
Equation~\eqref{eq:Gamma-Euler} gives real-rootedness of
$\Gamma_M(x)$, and \eqref{eq:intro-gamma-transform} gives real-rootedness of
$Z_M(x)$. If $d\le3$, then $\Gamma_M(x)$ has degree at most one, while
\Cref{lem:rank45-discriminants} proves that $\Gamma_M(x)$ is real-rooted when
$d=4$ or $5$. In these cases, $\gamma$-positivity and
\eqref{eq:intro-gamma-transform} show that both $\Gamma_M(x)$ and $Z_M(x)$
have only negative real zeros.

\subsection{The Poisson transform and Laguerre polynomials}

For a polynomial $P(y)$, we define its Poisson transform~\cite{JacquetSzpankowski1998} by
\begin{equation}\label{eq:Poisson-def}
\Tcal[P](x)
=
e^{-x}\sum_{\ell\ge0}
P(\ell)\frac{x^\ell}{\ell!}
=
e^{-x}
P\!\left(x\frac{\dd}{\dd x}\right)e^x.
\end{equation}
For the polynomial $N_{m,d,\lambda}(y)$ in \eqref{eq:N-def}, write
\(
T_{m,d,\lambda}(x)=\Tcal[N_{m,d,\lambda}](x).
\)

Multiplying the first equality in \eqref{eq:Poisson-def} by $e^x$ gives
\begin{equation}\label{eq:Poisson-egf}
\sum_{\ell\ge0}
P(\ell)\frac{x^\ell}{\ell!}
=
e^x\Tcal[P](x).
\end{equation}
Since $P(\ell)$ has polynomial growth, the series in \eqref{eq:Poisson-egf}
converges locally uniformly on $\mathbb C$, as do its termwise derivatives.

For $k\ge0$, write
$y^{\underline{k}}=y(y-1)\cdots(y-k+1)$, with
$y^{\underline{0}}=1$. Reindexing the series in \eqref{eq:Poisson-def} shows
that $\Tcal[y^{\underline{k}}](x)=x^k$. Since the falling factorials form a
monic basis of $\R[y]$, the Poisson transform preserves degree and leading
coefficient.

The rising factorials in \eqref{eq:N-def} have particularly simple Poisson
transforms in terms of generalized Laguerre polynomials. For
$n\in\mathbb Z_{\ge0}$ and $\alpha\ge0$, we use the normalization
\(
L_n^{(\alpha)}(x)
=\sum_{k=0}^{n}(-1)^k
\binom{n+\alpha}{n-k}\frac{x^k}{k!}
\)
for the generalized Laguerre polynomials; see~\cite[Eq.~18.5.12]{DLMF}.

\begin{lemma}\label{lem:Poisson-Laguerre}
For $n,\alpha\in\mathbb Z_{\ge0}$,
\(
\Tcal\bigl[y(y+\alpha)_n\bigr](x)
=n!xL_n^{(\alpha)}(-x).
\)
\end{lemma}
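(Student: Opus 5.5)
We prove the identity by computing the exponential generating function $\sum_{\ell\ge0}\ell(\ell+\alpha)_n x^\ell/\ell!$ in closed form and then multiplying by $e^{-x}$, as in \eqref{eq:Poisson-def}.

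\emph{Reduction.} Since $y(y+\alpha)_n=\vartheta\,(y+\alpha)_n$ evaluated at $y=\vartheta$, and $e^{-x}\vartheta e^x$ conjugation is simple, it is cleaner to work at the level of coefficients. For $\ell\ge1$,
\[
\frac{\ell\,(\ell+\alpha)_n}{\ell!}=\frac{(\ell+\alpha)_n}{(\ell-1)!},
\qquad
(\ell+\alpha)_n=\frac{(\ell+\alpha+n-1)!}{(\ell+\alpha-1)!}.
\]
Put $j=\ell-1$. This gives
\[
\sum_{\ell\ge0}\ell(\ell+\alpha)_n\frac{x^\ell}{\ell!}
=x\sum_{j\ge0}\frac{(j+\alpha+n)!}{(j+\alpha)!\,j!}x^j
=x\,n!\sum_{j\ge0}\binom{j+\alpha+n}{n}\frac{x^j}{j!}.
\]

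\emph{Expanding the binomial.} Apply Vandermonde's identity
\[
\binom{j+\alpha+n}{n}=\sum_{k=0}^{n}\binom{j}{k}\binom{n+\alpha}{n-k}.
\]
Combined with $\sum_{j\ge0}\binom jk x^j/j!=x^ke^x/k!$, this yields
\[
\sum_{j\ge0}\binom{j+\alpha+n}{n}\frac{x^j}{j!}
=e^x\sum_{k=0}^{n}\binom{n+\alpha}{n-k}\frac{x^k}{k!}
=e^xL_n^{(\alpha)}(-x),
\]
where the last equality is the stated normalization of $L_n^{(\alpha)}$ with $x$ replaced by $-x$. The sign $(-1)^k$ cancels against $(-x)^k$.

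\emph{Conclusion.} Multiplying by $x\,n!$ and then by $e^{-x}$ gives $\Tcal\bigl[y(y+\alpha)_n\bigr](x)=n!\,x\,L_n^{(\alpha)}(-x)$. All series converge absolutely on $\mathbb C$, so the interchange of summations is justified.

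The only step needing care is the rewriting of $\ell(\ell+\alpha)_n/\ell!$ as $n!\binom{j+\alpha+n}{n}/j!$. The factorial bookkeeping is valid because $\alpha\ge0$ is an integer, so every factorial has a nonnegative integer argument. Once that form is reached, the Vandermonde expansion finishes the computation directly.
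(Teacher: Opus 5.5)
Your proof is correct and follows essentially the same route as the paper. Your reindexing $\ell=j+1$ is the paper's shift identity $\Tcal[yQ(y)](x)=x\Tcal[Q(y+1)](x)$ applied to $Q(y)=(y+\alpha)_n$, after which both arguments expand $(j+\alpha+1)_n=n!\binom{j+\alpha+n}{n}$ by Vandermonde's identity and sum termwise. The opening sentence about conjugating $\vartheta$ by $e^{x}$ is garbled and unused, so you can delete it.
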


\begin{proof}
For every $Q\in\R[y]$, reindexing the defining series gives
\begin{align*}
\Tcal[yQ(y)](x)
&=e^{-x}\sum_{\ell\ge1}
\ell Q(\ell)\frac{x^\ell}{\ell!}\\
&=xe^{-x}\sum_{r\ge0}Q(r+1)\frac{x^r}{r!}\\
&=x\Tcal[Q(y+1)](x).
\end{align*}
Taking $Q(y)=(y+\alpha)_n$ and using Vandermonde's identity, we obtain
\begin{align*}
\Tcal\bigl[y(y+\alpha)_n\bigr](x)
&=x\Tcal[(y+\alpha+1)_n](x)\\
&=n!x\sum_{k=0}^{n}
\binom{n+\alpha}{n-k}\frac{x^k}{k!}\\
&=n!xL_n^{(\alpha)}(-x)
\end{align*}
as desired.
\end{proof}

Applying \Cref{lem:Poisson-Laguerre} to the terms in
\eqref{eq:N-uniform-def}, we obtain
\(
T_{m,d,0}(x)
=
m!
+
\sum_{j=1}^{m-1}
(m-j)!\binom{m+d}{j}(j-1)!xL_{j-1}^{(1)}(-x).
\)
Taking $(n,\alpha)=(m-1,2)$ in the identity for
$\Tcal[y(y+\alpha)_n]$ evaluates the correction term in
\eqref{eq:N-def}.  Hence
\begin{equation}\label{eq:T-lambda}
T_{m,d,\lambda}(x)
=
T_{m,d,0}(x)
-
\frac{2\lambda}{d}
(m-1)!xL_{m-1}^{(2)}(-x).
\end{equation}
For $\lambda>0$, the polynomial $T_{m,d,\lambda}(x)$ has degree $m$ and
leading term $-(2\lambda/d)x^m$, whereas $T_{m,d,0}(x)$ has degree $m-1$.

\subsection{Residues of normalized Poisson transforms}

Equation~\eqref{eq:T-lambda} writes $T_{m,d,\lambda}(x)$ as a difference
involving Laguerre polynomials, but it does not determine whether
$T_{m,d,\lambda}(x)$ is real-rooted. The main new device in the proof is to
divide $T_{m,d,\lambda}(x)$ by
$\Tcal[(y)_m](x)$ and count its zeros by determining and comparing the poles
and residues of the resulting rational function. To obtain the required
partial fraction expansion for $T_{m,d,0}(x)$, we express $N_{m,d}(y)$ in
terms of the polynomials obtained by deleting one factor from $(y)_m$.

For $m\ge2$ and $d\ge1$, recall that
\(
N_{m,d}(y)
=
\sum_{j=0}^{m-1}
(m-j)!\binom{m+d}{j}(y)_j.
\)
For $0\le j\le m-1$, set
\(
R_j(y)
=
\prod_{\substack{0\le h\le m-1\\h\ne j}}(y+h)
=
\frac{(y)_m}{y+j}.
\)

\begin{proposition}
\label{prop:N-R-expansion}
Let $m\ge2$ and $d\ge1$. Then
\begin{equation}\label{eq:N-R}
N_{m,d}(y)
=
\sum_{j=0}^{m-1}
(m-j)\frac{(d)_j}{j!}R_j(y).
\end{equation}
\end{proposition}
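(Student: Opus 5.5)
Both sides of \eqref{eq:N-R} are polynomials in $y$ of degree at most $m-1$. On the left, $(y)_j$ has degree $j\le m-1$. On the right, each $R_j$ has degree exactly $m-1$. So the plan is Lagrange interpolation: it suffices to check equality at the $m$ distinct points $y=-k$ for $k=0,1,\ldots,m-1$, which are exactly the zeros of $(y)_m$.

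\emph{Right-hand side.} Since $R_j(y)=(y)_m/(y+j)$, the polynomial $R_j$ vanishes at $-k$ whenever $j\ne k$. Only the $j=k$ term survives, and
\[
R_k(-k)=\prod_{h\ne k}(h-k)=(-1)^k k!\,(m-1-k)!.
\]
Hence the right-hand side at $y=-k$ equals
\[
(m-k)\frac{(d)_k}{k!}(-1)^k k!\,(m-1-k)!=(-1)^k (d)_k\,(m-k)!.
\]

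\emph{Left-hand side.} We have $(-k)_j=0$ for $j>k$ and $(-k)_j=(-1)^j k!/(k-j)!$ for $j\le k$. Therefore
\[
N_{m,d}(-k)=\sum_{j=0}^{k}(-1)^j (m-j)!\binom{m+d}{j}\frac{k!}{(k-j)!}.
\]
Dividing by $k!\,(m-k)!$, the claim reduces to the binomial identity
\[
\sum_{j=0}^{k}(-1)^j\binom{m+d}{j}\binom{m-j}{k-j}=(-1)^k\frac{(d)_k}{k!}=(-1)^k\binom{d+k-1}{k}.
\]

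\emph{The binomial identity.} I expect this to be the only real step, and a generating-function computation should handle it. Write $\binom{m-j}{k-j}=[t^{k-j}](1-t)^{-(m-k+1)}$; this is valid since $m-j\ge m-k\ge1$. Then the sum becomes a Cauchy product:
\[
[t^k]\,(1-t)^{m+d}(1-t)^{-(m-k+1)}=[t^k]\,(1-t)^{d+k-1}=(-1)^k\binom{d+k-1}{k}.
\]
This gives the identity, and interpolation at the $m$ points completes the proof.

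As a fallback, I would instead argue by induction on $m$ using the recursion \eqref{eq:N-recursion}, noting that the polynomials $R_j$ for $m+1$ are $(y+m)$ times those for $m$. That route is messier, so the evaluation argument is the one to carry out.
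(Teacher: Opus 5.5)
Your proof is correct and follows essentially the same route as the paper. You both evaluate at the zeros $y=-k$ of $(y)_m$, where only $R_k$ survives, and reduce the claim to $N_{m,d}(-k)=(-1)^k(m-k)!\,(d)_k$. The only difference is that you establish this value by a generating-function Cauchy product, whereas the paper cites the Chu--Vandermonde identity in hypergeometric form; these are the same Vandermonde convolution.
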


\begin{proof}
For $0\le k,j\le m-1$,
\[
R_j(-k)
=
\begin{cases}
(-1)^j j!(m-1-j)!,&k=j,\\
0,&k\ne j.
\end{cases}
\]
Thus $R_0,\ldots,R_{m-1}$ form a basis for the polynomials of degree at most
$m-1$. Since $(-k)_j=(-1)^j j!\binom{k}{j}$, the Chu--Vandermonde identity
gives
\begin{align*}
N_{m,d}(-k)
&=
m!\sum_{j=0}^k
\frac{(-k)_j(-m-d)_j}{(-m)_j j!}\\
&=
m!\frac{(d)_k}{(-m)_k}
=
(-1)^k(m-k)!(d)_k.
\end{align*}
Consequently,
\(
\frac{N_{m,d}(-k)}{R_k(-k)}
=
(m-k)\frac{(d)_k}{k!}.
\)
The coefficients in \eqref{eq:N-R} are therefore the interpolation
coefficients of $N_{m,d}(y)$ in this basis.
\end{proof}

By \Cref{lem:Poisson-Laguerre},
\(
\Tcal[(y)_m](x)=(m-1)!xL_{m-1}^{(1)}(-x).
\)
Let
\(
0<x_1<x_2<\cdots<x_{m-1},
\)
be the positive simple zeros of $L_{m-1}^{(1)}(x)$~\cite[Section~18.16(iv)]{DLMF}.

For $0\le j\le m-1$ and $1\le i\le m-1$, define
\(
b_{i,j}
:=
\Res_{x=-x_i}\frac{\Tcal[R_j](x)}{\Tcal[(y)_m](x)}
=
\frac{\Tcal[R_j](-x_i)}
{(m-1)!x_i\bigl(L_{m-1}^{(1)}\bigr)'(x_i)}.
\)

For $1\le j\le m-1$, canceling the common factor $x$ in
$\Tcal[R_j](x)/\Tcal[(y)_m](x)$ leaves a proper rational function whose only
possible poles are $-x_1,\ldots,-x_{m-1}$, all of order at most one. Hence
\begin{equation}\label{eq:R-partial-fraction}
\frac{\Tcal[R_j](x)}
{(m-1)!xL_{m-1}^{(1)}(-x)}
=
\sum_{i=1}^{m-1}
\frac{b_{i,j}}{x+x_i}.
\end{equation}
For $j=0$, there is also a simple pole at $x=0$. Since
$\Tcal[R_0](0)=R_0(0)=(m-1)!$ and $L_{m-1}^{(1)}(0)=m$, its residue is
$1/m$, and therefore
\begin{equation}\label{eq:R0-partial-fraction}
\frac{\Tcal[R_0](x)}
{(m-1)!xL_{m-1}^{(1)}(-x)}
=
\frac1{mx}
+
\sum_{i=1}^{m-1}\frac{b_{i,0}}{x+x_i}.
\end{equation}

Applying $\Tcal$ to \eqref{eq:N-R} expresses the residues of
$T_{m,d,0}(x)/\Tcal[(y)_m](x)$ as positive linear combinations of the
numbers $b_{i,j}$. Their signs, sums, and ordering therefore control the
residues used to locate the zeros of $T_{m,d,\lambda}(x)$.

\begin{lemma}\label{lem:residues}
Let $m\ge2$. For $1\le i\le m-1$,
\(
b_{i,0}=\frac1m,
b_{i,1}=\frac1{m-1}.
\)
Every residue $b_{i,j}$ is positive. For $1\le j\le m-1$,
\begin{equation}\label{eq:b-sum}
\sum_{i=1}^{m-1}b_{i,j}=1.
\end{equation}
For $2\le j\le m-1$,
\begin{equation}\label{eq:b-monotone}
b_{1,j}<b_{2,j}<\cdots<b_{m-1,j}.
\end{equation}
\end{lemma}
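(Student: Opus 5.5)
The plan is to turn every residue $b_{i,j}$ into an explicit integral against the Laguerre weight. Put $L=L_{m-1}^{(1)}$ and $g_j=\Tcal[R_j]$. Then the identities in the statement become statements about that integral at the nodes $x_i$.

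\emph{Step 1: the sum identity \eqref{eq:b-sum}.} I would read this off from degrees and leading coefficients. The Poisson transform preserves degree and leading coefficient, so for $j\ge1$ the polynomial $g_j$ has degree $m-1$ and is monic. The denominator $\Tcal[(y)_m](x)=(m-1)!\,xL(-x)$ is monic of degree $m$. In \eqref{eq:R-partial-fraction}, compare the coefficient of $1/x$ at infinity. The left side behaves like $1/x$, and the right side like $\bigl(\sum_i b_{i,j}\bigr)/x$. This gives \eqref{eq:b-sum}. The same comparison in \eqref{eq:R0-partial-fraction} gives $\sum_i b_{i,0}=1-1/m$, which serves as a consistency check for the case $j=0$.

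\emph{Step 2: the explicit values for $j=0,1$.} Lemma~\ref{lem:Poisson-Laguerre} and its proof give closed forms. Since $R_0=(y+1)_{m-1}$, we get $g_0(x)=(m-1)!\,L_{m-1}^{(0)}(-x)$. Since $R_1=y(y+2)_{m-2}$, we get $g_1(x)=(m-2)!\,xL_{m-2}^{(1)}(-x)$. I would use two classical identities:
\[
L_n^{(0)}=L_n^{(1)}-L_{n-1}^{(1)},
\qquad
xL_n^{(\alpha)\prime}(x)=nL_n^{(\alpha)}(x)-(n+\alpha)L_{n-1}^{(\alpha)}(x).
\]
At a zero $x_i$ of $L$, they give
\[
L_{m-2}^{(1)}(x_i)=-\frac{x_iL'(x_i)}{m},
\qquad
L_{m-1}^{(0)}(x_i)=\frac{x_iL'(x_i)}{m}.
\]
Substituting into the residue formula for $b_{i,j}$ should give $b_{i,0}=1/m$ and $b_{i,1}=1/(m-1)$ after a routine sign check. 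Since the denominator is $(m-1)!\,xL(-x)$, its derivative at $-x_i$ is $(m-1)!\,x_iL'(x_i)$.

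\emph{Step 3: an integral formula for general $j$.} The proof of Lemma~\ref{lem:Poisson-Laguerre} shows that $\Tcal[(y+c)Q]=(x\tfrac{\dd}{\dd x}+x+c)\Tcal[Q]$. Since $(y+j)R_j=(y)_m$, the function $g_j$ solves
\[
xg_j'+(x+j)g_j=\Tcal[(y)_m].
\]
For $j\ge1$ the solution that is regular at $0$ is
\[
g_j(x)=x^{-j}e^{-x}\int_0^x t^{j-1}e^t\,\Tcal[(y)_m](t)\dd t.
\]
Substitute $t=-s$ and evaluate at $x=-x_i$. Up to a sign that I would fix once, this gives
\[
b_{i,j}=\frac{e^{x_i}\,G_j(x_i)}{x_i^{\,j+1}L'(x_i)},
\qquad
G_j(x)=\int_0^x s^je^{-s}L(s)\dd s.
\]
The Rodrigues-type identity
\[
\frac{\dd}{\dd x}\bigl[x^{2}e^{-x}L_{m-2}^{(2)}(x)\bigr]=(m-1)\,xe^{-x}L_{m-1}^{(1)}(x)
\]
gives $G_1$ in closed form. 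This recovers Step~2 and yields the normalization
\[
\frac{b_{i,j}}{b_{i,1}}=\varphi_j(x_i),
\qquad
\varphi_j(x)=\frac{G_j(x)}{x^{j-1}G_1(x)}.
\]
Positivity of $b_{i,j}$ is then equivalent to $\varphi_j(x_i)>0$. The strict increase in \eqref{eq:b-monotone} follows if $\varphi_j$ is strictly increasing on $(0,x_{m-1}]$, or at least along the nodes.

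\emph{Step 4: the main obstacle.} The hard part is positivity and monotonicity of $\varphi_j$ for $j\ge2$. Integrating $G_j=\int_0^x s^{j-1}\dd G_1$ by parts gives
\[
\varphi_j(x)=1-(j-1)\,\frac{\int_0^x s^{j-2}G_1(s)\dd s}{x^{j-1}G_1(x)}.
\]
So the task reduces to comparing a weighted average of $G_1$ on $[0,x]$ with its endpoint value $G_1(x)$, where $G_1(x)=x^2e^{-x}L_{m-2}^{(2)}(x)/(m-1)$. I would first try to prove monotonicity of $\varphi_j$ by showing that its derivative has a fixed sign. This uses the interlacing of the zeros of $L_{m-2}^{(2)}$ with the $x_i$ and the sign pattern of $G_1$ on the intervals between consecutive nodes. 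As a fallback, I would use the recursion $(y+j)R_j=(y+j+1)R_{j+1}$ to compare $b_{i,j+1}$ with $b_{i,j}$ inductively, starting from the explicit base case $b_{i,1}=1/(m-1)$.
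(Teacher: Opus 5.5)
Steps 1--3 are essentially sound. Step 1 is exactly the paper's argument for \eqref{eq:b-sum}, and your integral representation is correct, with the sign fixed as $b_{i,j}=-e^{x_i}G_j(x_i)/\bigl(x_i^{j+1}L'(x_i)\bigr)$. There is one slip in Step 2. Since $R_1=y(y+2)_{m-2}$, \Cref{lem:Poisson-Laguerre} gives $g_1(x)=(m-2)!\,xL_{m-2}^{(2)}(-x)$, not $L_{m-2}^{(1)}$. With your formula the residue would come out as $x_i/(m(m-1))$; the value $1/(m-1)$ follows from $L'=-L_{m-2}^{(2)}$, which your $G_1$ in Step 3 correctly reflects.

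The genuine gap is Step 4. Positivity of $b_{i,j}$ for $j\ge2$ and the strict ordering \eqref{eq:b-monotone} are the substance of the lemma, and you do not prove them. Your main plan cannot work. $G_1$ vanishes at the zeros of $L_{m-2}^{(2)}=-L'$, one strictly between each pair of consecutive nodes, so $\varphi_j$ has poles there. A fixed sign of $\varphi_j'$ on each component therefore cannot compare nodes in different components. Nor is $\varphi_j$ increasing where it is continuous: $\varphi_j(0^+)=2/(j+1)$, whereas for $m=3$ and $j=2$ one has $\varphi_2(x_1)=2b_{1,2}=2/x_2=(3-\sqrt3)/3<2/3$, and $G_1>0$ on $(0,x_1]$. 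The fallback gives nothing either. Both sides of $(y+j)R_j=(y+j+1)R_{j+1}$ equal $(y)_m$, so after applying $\Tcal$ each side vanishes at $-x_i$ on its own. This relates $g_j'(-x_i)$ to $g_j(-x_i)$, but not $b_{i,j}$ to $b_{i,j+1}$.

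The missing idea is a recurrence in $j$, at a fixed node, that involves only residues. The paper applies $\Tcal$ to the identity $(m-j)yR_j(y-1)-j(j-1)R_{j-1}(y)=y\bigl((y)_m-(y-1)_m\bigr)-j(y)_m$, using $x\Tcal[Q]=\Tcal[yQ(y-1)]$ and $x\frac{\dd}{\dd x}\Tcal[Q]=\Tcal[y(Q(y)-Q(y-1))]$. Evaluating at $-x_i$ yields $(m-j)b_{i,j}=1-j(j-1)t_ib_{i,j-1}$ with $t_i=1/x_i$.

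Within your framework the same recurrence follows from the Laguerre equation $xL''+(2-x)L'+(m-1)L=0$. It gives $\frac{\dd}{\dd x}\bigl[x^{j+1}e^{-x}L'\bigr]=x^je^{-x}\bigl[(j-1)L'-(m-1)L\bigr]$. Integrating over $[0,x_i]$ and integrating by parts once more gives $(m-j)G_j(x_i)+j(j-1)G_{j-1}(x_i)=-x_i^{j+1}e^{-x_i}L'(x_i)$.

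Because of the minus sign, the recurrence does not give positivity directly. The paper solves it in closed form by induction: $b_{i,j}=\frac{j!(j-1)!(m-1-j)!}{(m-1)!}\,e^{(i)}_{j-1}$, where $e^{(i)}_{k}=e_k(t_1,\ldots,\widehat{t_i},\ldots,t_{m-1})$. The induction uses $e_k(t_1,\ldots,t_{m-1})=\frac{(m-1)!}{k!(k+1)!(m-1-k)!}$, which is read off from $L(x)/m=\prod_h(1-t_hx)$. Positivity is then immediate. The ordering \eqref{eq:b-monotone} follows from $e^{(i+1)}_{j-1}-e^{(i)}_{j-1}=(t_i-t_{i+1})\,e_{j-2}(t_1,\ldots,\widehat{t_i},\widehat{t_{i+1}},\ldots,t_{m-1})>0$.
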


\begin{proof}
For every polynomial $Q$, termwise comparison in \eqref{eq:Poisson-egf}
gives
\begin{align}
x\Tcal[Q(y)](x)
&=
\Tcal[yQ(y-1)](x),
\label{eq:T-shift}\\
x\frac{\dd}{\dd x}\Tcal[Q(y)](x)
&=
\Tcal\!\left[
y\bigl(Q(y)-Q(y-1)\bigr)
\right](x).
\label{eq:T-derivative}
\end{align}
For $1\le i\le m-1$, put $t_i=x_i^{-1}$.
For $1\le j\le m-1$, both $yR_j(y-1)$ and $R_{j-1}(y)$ are multiples of
$(y)_{m-1}/(y+j-1)$. To obtain a recurrence for $b_{i,j}$, we choose the
coefficients $m-j$ and $-j(j-1)$ so that this denominator cancels, leaving an
expression in $(y)_m$ and its first difference. Indeed, since
$(y)_m-(y-1)_m=m(y)_{m-1}$, the definition of $R_j$ gives
\begin{align*}
&(m-j)yR_j(y-1)-j(j-1)R_{j-1}(y)\\
&\quad=
\frac{(y)_{m-1}}{y+j-1}
\left((m-j)y(y-1)-j(j-1)(y+m-1)\right)\\
&\quad=
(y)_{m-1}\bigl((m-j)y-j(m-1)\bigr)\\
&\quad=
y\bigl((y)_m-(y-1)_m\bigr)-j(y)_m
\quad(0\le j\le m-1),
\end{align*}
where the term containing $R_{j-1}$ is omitted when $j=0$.
Applying $\Tcal$ and using
\eqref{eq:T-shift}--\eqref{eq:T-derivative} gives
\(
(m-j)x\Tcal[R_j](x)-j(j-1)\Tcal[R_{j-1}](x)
=x\frac{\dd}{\dd x}\Tcal[(y)_m](x)-j\Tcal[(y)_m](x),
\)
with the same convention when $j=0$. Evaluating at $x=-x_i$, dividing by
$-x_i(\Tcal[(y)_m])'(-x_i)$, and using the definition of $b_{i,j}$ gives
\(
mb_{i,0}=1
\)
and
\begin{equation}\label{eq:b-rec}
(m-j)b_{i,j}
=
1-j(j-1)t_i b_{i,j-1}
\qquad
(1\le j\le m-1).
\end{equation}
Thus $b_{i,0}=1/m$, and the case $j=1$ of \eqref{eq:b-rec} gives
$b_{i,1}=1/(m-1)$.

Let $e_k$ denote the elementary symmetric polynomial of degree $k$. For
$0\le k\le m-2$, put
\(
e_k^{(i)}
=e_k(t_1,\ldots,\widehat{t_i},\ldots,t_{m-1}).
\)
The normalized Laguerre polynomial factors as
\(
\frac{L_{m-1}^{(1)}(x)}m
=
\prod_{h=1}^{m-1}(1-t_hx).
\)
Comparing coefficients gives
\(
e_k(t_1,\ldots,t_{m-1})
=
\frac{(m-1)!}
{k!(k+1)!(m-1-k)!}
\quad
(0\le k\le m-1).
\)

We prove by induction that, for $1\le j\le m-1$,
\(
b_{i,j}
=
\frac{j!(j-1)!(m-1-j)!}{(m-1)!}\,e_{j-1}^{(i)}.
\)
The case $j=1$ is the formula for $b_{i,1}$ above. If $j\ge2$, put
\(
A_j=\frac{j!(j-1)!(m-j)!}{(m-1)!}.
\)
The induction hypothesis gives
\(
j(j-1)b_{i,j-1}
=A_je_{j-2}^{(i)}.
\)
The coefficient formula with $k=j-1$ gives
\(
A_je_{j-1}(t_1,\ldots,t_{m-1})=1,
\)
and
\(
e_{j-1}(t_1,\ldots,t_{m-1})
=e_{j-1}^{(i)}+t_i e_{j-2}^{(i)}.
\)
Substitution in \eqref{eq:b-rec} gives
\(
(m-j)b_{i,j}=A_je_{j-1}^{(i)},
\)
which completes the induction.

For $0\le k\le m-2$, we have $e_k^{(i)}>0$: the value for $k=0$ is $1$,
and for $k\ge1$ it is a nonempty sum of products of positive $t_h$.
The factorials in the formula for $b_{i,j}$ are positive, so $b_{i,j}>0$.
For $1\le j\le m-1$,
\eqref{eq:R-partial-fraction} and preservation of degree and leading
coefficient under $\Tcal$ give
\[
\frac{\Tcal[R_j](x)}{\Tcal[(y)_m](x)}
=\frac1x+O(x^{-2})
\quad(x\to\infty).
\]
On the other hand,
\[
\sum_{i=1}^{m-1}\frac{b_{i,j}}{x+x_i}
=\frac{\sum_{i=1}^{m-1}b_{i,j}}x+O(x^{-2}).
\]
Comparing the coefficients of $x^{-1}$ proves \eqref{eq:b-sum}.

Finally, since $t_1>\cdots>t_{m-1}>0$, for
$1\le i\le m-2$ and $2\le j\le m-1$,
\(
e_{j-1}^{(i+1)}-e_{j-1}^{(i)}
=
(t_i-t_{i+1})
e_{j-2}(t_1,\ldots,\widehat{t_i},\widehat{t_{i+1}},\ldots,t_{m-1})
>0.
\)
Together with the formula for $b_{i,j}$, this proves
\eqref{eq:b-monotone}.
\end{proof}

\subsection{The partial fraction expansion of
\texorpdfstring{$T_{m,d,\lambda}(x)$}{T(m,d,lambda)(x)}}

We first determine and estimate the residues contributed by $T_{m,d,0}(x)$.
Apply $\Tcal$ to \eqref{eq:N-R}. By \eqref{eq:R-partial-fraction} and
\eqref{eq:R0-partial-fraction}, there are unique real numbers
$h_1,\ldots,h_{m-1}$ such that
\begin{equation}\label{eq:uniform-rational}
\frac{T_{m,d,0}(x)}
{(m-1)!xL_{m-1}^{(1)}(-x)}
=
\frac1x
+
\sum_{i=1}^{m-1}
\frac{h_i}{x+x_i},
\end{equation}
where
\(
h_i
=
\sum_{j=0}^{m-1}
(m-j)\binom{d+j-1}{j}b_{i,j}.
\)

To count the real zeros of $T_{m,d,0}(x)$, we compare the residues
$h_1,\ldots,h_{m-1}$.

\begin{lemma}
\label{lem:h-order}
For $m\ge2$ and $d\ge1$, we have
\begin{equation}\label{eq:h-order}
d+1\le h_1<h_2<\cdots<h_{m-1}.
\end{equation}
\end{lemma}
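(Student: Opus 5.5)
We read off both claims from the explicit formula
\[
h_i=\sum_{j=0}^{m-1}(m-j)\binom{d+j-1}{j}b_{i,j}
\]
combined with \Cref{lem:residues}.

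\textbf{Monotonicity.} Split the sum into the terms $j=0,1$ and the terms $j\ge2$. By \Cref{lem:residues}, $b_{i,0}=1/m$ and $b_{i,1}=1/(m-1)$ do not depend on $i$. Their contributions are therefore the constants
\[
m\cdot\frac1m=1
\qquad\text{and}\qquad
(m-1)d\cdot\frac1{m-1}=d.
\]
Hence
\[
h_i=d+1+\sum_{j=2}^{m-1}(m-j)\binom{d+j-1}{j}b_{i,j}.
\]
For $2\le j\le m-1$, the weights $(m-j)\binom{d+j-1}{j}$ are positive. By \eqref{eq:b-monotone}, each $b_{i,j}$ is strictly increasing in $i$.

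If $m\ge3$, the sum over $j\ge2$ is nonempty, and each of its terms is strictly increasing in $i$. This gives $h_1<h_2<\cdots<h_{m-1}$.

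If $m=2$, there is only one index $i=1$, so the chain of strict inequalities is vacuous.

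\textbf{Lower bound.} For $m=2$ the display gives $h_1=d+1$ exactly. For $m\ge3$, every term in the sum over $j\ge2$ is positive, because $b_{i,j}>0$ by \Cref{lem:residues}. Therefore $h_1>d+1$, and in all cases $h_1\ge d+1$.

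No real obstacle is expected. The only point needing care is the bookkeeping of the $j=0$ and $j=1$ terms: we must check that the binomial $\binom{d+j-1}{j}$ equals $1$ at $j=0$ and $d$ at $j=1$. We must also check that the expansion defining $h_i$ correctly absorbs the contribution of $R_0$ at the poles $-x_i$, which is given by \eqref{eq:R0-partial-fraction}.
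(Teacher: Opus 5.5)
Your proposal is correct and follows the paper's proof essentially verbatim: the $j=0$ and $j=1$ terms contribute the constants $1$ and $d$, and for $m\ge3$ the $j\ge2$ terms give positivity and strict monotonicity in $i$ via \Cref{lem:residues}. The one cosmetic difference is that you use strict increase of every $j\ge2$ term, whereas the paper needs only the $j=2$ term strictly increasing and the others nondecreasing.
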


\begin{proof}
The terms $j=0$ and $j=1$ contribute
$mb_{i,0}=m\cdot(1/m)=1$ and
$(m-1)db_{i,1}=(m-1)d\cdot(1/(m-1))=d$, respectively. This proves the
assertion when $m=2$. Suppose that $m\ge3$. Every remaining term is positive,
so $h_i>d+1$. The term with $j=2$ is strictly increasing in $i$ by
\Cref{lem:residues}, while all other terms are nondecreasing. This proves
\eqref{eq:h-order}.
\end{proof}

The correction term in \eqref{eq:T-lambda} changes both the constant term of
the partial fraction expansion and its residues, as follows.

\begin{proposition}
Let $m\ge2$, $d\ge1$, and $\lambda\in\R$.  Then
\begin{equation}\label{eq:T-rational}
\frac{T_{m,d,\lambda}(x)}
{(m-1)!xL_{m-1}^{(1)}(-x)}
=
-\frac{2\lambda}{d}
+
\frac1x
+
\sum_{i=1}^{m-1}
\frac{h_i-2\lambda/d}{x+x_i}.
\end{equation}
\end{proposition}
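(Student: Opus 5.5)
By \eqref{eq:T-lambda} and linearity of division, I split the left side into two pieces and treat each one separately. The first piece is $T_{m,d,0}(x)/\bigl((m-1)!xL_{m-1}^{(1)}(-x)\bigr)$, whose expansion is already given by \eqref{eq:uniform-rational}. So it suffices to establish the partial fraction expansion
\[
\frac{(m-1)!\,xL_{m-1}^{(2)}(-x)}{(m-1)!\,xL_{m-1}^{(1)}(-x)}
=\frac{L_{m-1}^{(2)}(-x)}{L_{m-1}^{(1)}(-x)}
=1+\sum_{i=1}^{m-1}\frac{1}{x+x_i}.
\]
Multiplying this by $-2\lambda/d$ and adding \eqref{eq:uniform-rational} then gives \eqref{eq:T-rational} directly.

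To prove the displayed identity, I use the Poisson-transform description rather than Laguerre identities. By \Cref{lem:Poisson-Laguerre}, the numerator equals $\Tcal[y(y+2)_{m-1}](x)$ and the denominator equals $\Tcal[(y)_m](x)=\Tcal[y(y+1)_{m-1}](x)$. Now $y(y+2)_{m-1}=y(y+2)\cdots(y+m)$ and $(y)_m=y(y+1)\cdots(y+m-1)$. Hence
\[
y(y+2)_{m-1}=\frac{(y)_m(y+m)}{y+1}=(y)_m\cdot\frac{y+m}{y+1}.
\]
Equivalently, $y(y+2)_{m-1}=R_1(y)(y+m)=yR_1(y)+mR_1(y)$. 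Here I must check the claim $yR_1(y)=(y)_m-R_1(y)$. It holds because $(y+1)R_1=(y)_m$, so $yR_1=(y)_m-R_1$. Therefore
\[
y(y+2)_{m-1}=(y)_m+(m-1)R_1(y).
\]
Applying $\Tcal$ and dividing by $\Tcal[(y)_m](x)$, \eqref{eq:R-partial-fraction} with $j=1$ gives
\[
1+(m-1)\sum_{i}\frac{b_{i,1}}{x+x_i}.
\]
Then \Cref{lem:residues} supplies $b_{i,1}=1/(m-1)$, which yields exactly $1+\sum_i 1/(x+x_i)$.

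The only step needing care is the polynomial identity $y(y+2)_{m-1}=(y)_m+(m-1)R_1(y)$. I verify it directly: $(y)_m+(m-1)R_1=R_1\bigl((y+1)+(m-1)\bigr)=R_1(y+m)$, and $R_1(y+m)=y(y+2)\cdots(y+m-1)(y+m)$, as needed. This identity holds for $m\ge2$, which is where the hypothesis is used. Once it is in place, the rest is bookkeeping: combine the two expansions, matching the constant term $-2\lambda/d$, the pole $1/x$ (which comes only from the $T_{m,d,0}$ part), and the residues $h_i-2\lambda/d$ at $-x_i$.
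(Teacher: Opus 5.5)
Your proof is correct. It reaches the key identity
$L_{m-1}^{(2)}(-x)/L_{m-1}^{(1)}(-x)=1+\sum_{i}1/(x+x_i)$ by a different route from the paper's.

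\textbf{The paper's route.} It cites two classical Laguerre identities:
$L_{m-1}^{(2)}(-x)=L_{m-1}^{(1)}(-x)+L_{m-2}^{(2)}(-x)$ and
$\frac{\dd}{\dd x}L_{m-1}^{(1)}(-x)=L_{m-2}^{(2)}(-x)$.
Together these make the ratio equal to $1$ plus the logarithmic derivative of $L_{m-1}^{(1)}(-x)=\frac{1}{(m-1)!}\prod_i(x+x_i)$.

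\textbf{Your route.} You work on the polynomial side, before the Poisson transform.
\begin{itemize}
\item Your identity $y(y+2)_{m-1}=(y)_m+(m-1)R_1(y)$ is exactly the preimage under $\Tcal$ of the first Laguerre identity. This is because $R_1(y)=y(y+2)_{m-2}$, so \Cref{lem:Poisson-Laguerre} turns it into $L_{m-1}^{(2)}(-x)=L_{m-1}^{(1)}(-x)+L_{m-2}^{(2)}(-x)$.
\item The residue value $b_{i,1}=1/(m-1)$ from \Cref{lem:residues} then does the work of the derivative identity and the logarithmic derivative. That value was obtained via the derivative rule \eqref{eq:T-derivative}, so it is the Poisson-side counterpart of that identity.
\end{itemize}
There is no circularity: \eqref{eq:R-partial-fraction} (applicable with $j=1$ since $m\ge2$) and \Cref{lem:residues} are proved earlier without using this proposition.

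\textbf{Comparison.} Your version stays entirely within the paper's own Poisson-transform and residue machinery and needs no external Laguerre citations. The paper's version is shorter and does not depend on the residue lemma. It also makes the unit residues at each $-x_i$ visible directly from the factorization of $L_{m-1}^{(1)}$.
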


\begin{proof}
The Laguerre identities~\cite[Eqs.~18.9.13 and~18.9.23]{DLMF}
\[
L_{m-1}^{(2)}(-x)
=L_{m-1}^{(1)}(-x)+L_{m-2}^{(2)}(-x),
\qquad
\frac{\dd}{\dd x}L_{m-1}^{(1)}(-x)
=L_{m-2}^{(2)}(-x),
\]
give
\(
\frac{L_{m-1}^{(2)}(-x)}
{L_{m-1}^{(1)}(-x)}
=
1+
\frac{\dd}{\dd x}\log L_{m-1}^{(1)}(-x).
\)
Since
\(
L_{m-1}^{(1)}(-x)
=\frac1{(m-1)!}\prod_{i=1}^{m-1}(x+x_i),
\)
its logarithmic derivative yields
\[
\frac{L_{m-1}^{(2)}(-x)}
{L_{m-1}^{(1)}(-x)}
=
1+
\sum_{i=1}^{m-1}\frac1{x+x_i}.
\]
Combining this identity with \eqref{eq:T-lambda} and
\eqref{eq:uniform-rational} gives \eqref{eq:T-rational}. In particular,
$2\lambda/d$ is subtracted from the residue at every nonzero pole.
\end{proof}

\subsection{Real-rootedness of the Poisson transform of
\texorpdfstring{$N_{m,d,\lambda}(y)$}{N(m,d,lambda)(y)}}

The following lemma gives sufficient conditions for the numerator in a partial
fraction expansion of the form \eqref{eq:T-rational} to have distinct real
zeros.

\begin{lemma}
\label{lem:root-count}
Let $r\ge2$, $a>0$, and $0<x_1<\cdots<x_{r-1}$. Define the
polynomial $F$ and the rational function $\Phi$ by
\[
\Phi(x):=\frac{F(x)}
{x\prod_{i=1}^{r-1}(x+x_i)}
=
-a
+
\frac1x
+
\sum_{i=1}^{r-1}
\frac{c_i}{x+x_i}.
\]
Assume that
\(
c_1<c_2<\cdots<c_{r-1},
c_{r-1}>0,
c_i\ne0\quad(1\le i\le r-1).
\)
If $c_1<0$, assume in addition that there exists $x_*\in(-x_1,0)$ such
that $\Phi(x_*)>0$. Then $F$ has $r$ distinct real zeros.
\end{lemma}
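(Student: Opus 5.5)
The plan is a sign-change count for $\Phi$ on the real axis, split by the poles. First, $F$ has degree exactly $r$ with leading coefficient $-a\ne0$, since $\Phi(x)\to-a$ as $x\to\pm\infty$. So it suffices to find $r$ distinct real zeros. The poles of $\Phi$ are $-x_{r-1}<\cdots<-x_1<0$, and they are all simple with nonzero residues $c_{r-1},\dots,c_1,1$. Zeros of $\Phi$ away from the poles are zeros of $F$. The poles themselves are not zeros of $F$, because each residue is nonzero.

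First I record the one-sided behaviour at each pole. Near $-x_i$, $\Phi$ tends to $+\infty$ from the right and $-\infty$ from the left if $c_i>0$, and the reverse if $c_i<0$. Near $0$ the residue is $1>0$, so $\Phi\to+\infty$ as $x\to0^+$ and $\Phi\to-\infty$ as $x\to0^-$.

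Next I count zeros interval by interval.
\begin{itemize}
\item On $(0,\infty)$, $\Phi$ goes from $+\infty$ to $-a<0$, which gives one zero.
\item On $(-\infty,-x_{r-1})$, $\Phi$ goes from $-a$ to $-\infty$ at $(-x_{r-1})^-$ (since $c_{r-1}>0$). This gives no guaranteed zero, and none is needed.
\item Since the $c_i$ are increasing and nonzero, there is an index $p\in\{0,\dots,r-1\}$ with $c_1<\cdots<c_p<0<c_{p+1}<\cdots<c_{r-1}$.
\item On an interval $(-x_{i+1},-x_i)$ where $c_i$ and $c_{i+1}$ have the same sign, the limits at the two ends are $+\infty$ and $-\infty$ in some order. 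So there is a zero.
\end{itemize}
This gives $r-2$ zeros from the $r-2$ interior intervals, except for the single sign-switching interval $(-x_{p+1},-x_p)$ when $1\le p\le r-2$. Together with the interval $(-x_1,0)$, the zeros from positive $x$ bring the total to exactly $r$ once the two remaining regions are handled.

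The remaining case analysis is the main step.

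\emph{Case $p=0$} (all $c_i>0$). On $(-x_1,0)$, $\Phi\to+\infty$ at $(-x_1)^+$ and $\Phi\to-\infty$ at $0^-$, giving one zero. All $r-2$ interior intervals give zeros, and $(0,\infty)$ gives one more. The total is $r$.

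\emph{Case $p\ge1$} ($c_1<0$). Now $\Phi\to-\infty$ at both ends of $(-x_1,0)$. The hypothesis $\Phi(x_*)>0$ yields two zeros there, one in $(-x_1,x_*)$ and one in $(x_*,0)$. The interval $(-x_{p+1},-x_p)$ has limits $+\infty$ at $(-x_{p+1})^+$ (from $c_{p+1}>0$) and $+\infty$ at $(-x_p)^-$ (from $c_p<0$). So it yields no guaranteed zero and is simply discarded.

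The count is then $(r-3)+2+1=r$ if $p\le r-2$. If $p=r-1$ this would need $c_{r-1}<0$, contradicting $c_{r-1}>0$, so that subcase cannot occur.

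In every case we get $r$ distinct real zeros of $F$, all lying in disjoint open intervals. This equals $\deg F$, so all zeros of $F$ are real and simple. The only delicate points are the bookkeeping of the sign-switch interval and noting that $p=r-1$ is excluded. I would present the argument as an intermediate value theorem count, with no need for monotonicity of $\Phi$.
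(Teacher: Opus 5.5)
Your proposal is correct and follows essentially the same route as the paper. Both use an intermediate-value count across the intervals between consecutive poles, discard the single sign-switching interval $(-x_{p+1},-x_p)$, recover it through the two zeros that $\Phi(x_*)>0$ forces in $(-x_1,0)$, and add one zero in $(0,\infty)$; your exclusion of $p=r-1$ via $c_{r-1}>0$ is the paper's observation that the negative case forces $r\ge3$.
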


\begin{proof}
The leading term of $F$ is $-ax^r$, so $\deg F=r$. Since the residues
$1,c_1,\ldots,c_{r-1}$ of $\Phi$ are nonzero, $F$ does not vanish at any
pole of $\Phi$, and no cancellation with the denominator occurs. The negative poles occur in the
order $-x_{r-1}<\cdots<-x_1<0$.

Suppose first that every $c_i$ is positive. Each of the $r-2$ intervals
between consecutive negative poles contains a zero of $F$, as does
$(-x_1,0)$.
Moreover, $\Phi(x)\to+\infty$ as $x\to0^+$, whereas
$\Phi(x)\to-a<0$ as $x\to+\infty$.
Thus $(0,\infty)$ contains a zero of $F$. These $r$ zeros are distinct.

Suppose now that some $c_i$ is negative. This case cannot occur when $r=2$,
because then $c_{r-1}=c_1>0$; hence $r\ge3$. There is a unique index $s$ such
that $c_s<0<c_{s+1}$. For $1\le i\le r-2$ with $i\ne s$, the residues
$c_i$ and $c_{i+1}$ have the same sign. Hence $\Phi$ has opposite signs
near the two ends of $(-x_{i+1},-x_i)$, so this interval contains a zero
of $F$. There are $r-3$ such intervals.

Since $c_1<0$,
\[
\lim_{x\to(-x_1)^+}\Phi(x)
=\lim_{x\to0^-}\Phi(x)=-\infty.
\]
Since $\Phi(x_*)>0$, each of $(-x_1,x_*)$ and $(x_*,0)$ contains a zero
of $F$. These two zeros, the zeros in the other $r-3$ intervals, and a
zero in $(0,\infty)$ give $r$ distinct real zeros of $F$.
\end{proof}

To bound the shift $2\lambda/d$ when applying \Cref{lem:root-count} to
\eqref{eq:T-rational}, we use the following packing estimate for sparse paving
matroids.

\begin{proposition}[{\cite[Proposition~5.15]{FNV2023}}]
\label{prop:packing}
Let $M$ be a sparse paving matroid of rank $d$ and corank $m$, where
$m,d\ge1$, and let $\lambda$ be the number of its circuit-hyperplanes. Then
\[
0\le\lambda\le
\Lambda_{m,d}
:=
\frac1{\max\{m,d\}+1}\binom{m+d}{d}.
\]
\end{proposition}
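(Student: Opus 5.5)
The packing bound is a result of Ferroni, Nasr, and Vecchi, so I would reconstruct the standard argument. The starting point is a basic fact about sparse paving matroids: two distinct circuit-hyperplanes $H_1,H_2$ satisfy $|H_1\triangle H_2|\ge4$, and so $|H_1\cap H_2|\le d-2$. The reason is that if $|H_1\cap H_2|=d-1$, then $H_1\cup H_2$ has $d+1$ elements and rank $d$, and circuit elimination applied to the circuits $H_1,H_2$ would produce a circuit of size at most $d$ inside $H_1\cup H_2$ other than $H_1,H_2$. This contradicts the paving property combined with the fact that $d$-subsets are dependent only when they are circuit-hyperplanes. Consequently, every $(d-1)$-subset of $E$ lies in at most one circuit-hyperplane.

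The first bound comes from double counting incidences between $(d-1)$-subsets and circuit-hyperplanes. Each circuit-hyperplane contains $d$ subsets of size $d-1$, and each such subset lies in at most one circuit-hyperplane. Hence $\lambda d\le\binom{m+d}{d-1}$. Since
\[
\binom{m+d}{d-1}=\frac{d}{m+1}\binom{m+d}{d},
\]
this gives
\[
\lambda\le\frac{1}{m+1}\binom{m+d}{d}.
\]

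For the second bound I would use duality. The dual $M^*$ is sparse paving of rank $m$ and corank $d$, and complementation $H\mapsto E\setminus H$ gives a bijection between the circuit-hyperplanes of $M$ and those of $M^*$. Applying the same count to $M^*$ yields $\lambda\le\frac{1}{d+1}\binom{m+d}{m}$. Taking the smaller of the two bounds gives exactly $\Lambda_{m,d}$, and $\lambda\ge0$ is trivial.

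A cleaner alternative, in place of the intersection argument, is to count $(d+1)$-sets. Fix an element outside a circuit-hyperplane $H$ and adjoin it to $H$; this produces $m$ distinct $(d+1)$-sets per circuit-hyperplane. Each $(d+1)$-set contains at most one circuit-hyperplane, by the same $|H_1\triangle H_2|\ge4$ fact. Hence $\lambda m\le\binom{m+d}{d+1}$, which gives the $1/(d+1)$ bound directly without passing to the dual.

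The only delicate step is the intersection lemma for distinct circuit-hyperplanes. I would prove it by noting that in a sparse paving matroid every $d$-subset is either a basis or a circuit-hyperplane. If $|H_1\cap H_2|=d-1$, then $H_1\cup H_2$ is a $(d+1)$-set containing the two circuits $H_1$ and $H_2$. The set $H_1\cap H_2$ has rank $d-1$ and closure $H_1$, because $H_1$ is a hyperplane and $H_1\cap H_2$ is independent of size $d-1$. By the same reasoning its closure is also $H_2$. This forces $H_1=H_2$, a contradiction.
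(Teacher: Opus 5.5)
Your proof is correct. The paper gives no argument for this statement and simply imports it from Ferroni--Nasr--Vecchi. Your double count is the standard Johnson-type bound, treating the circuit-hyperplanes as a constant-weight code of weight $d$ and minimum distance $4$: counting $(d-1)$-subsets gives the factor $1/(m+1)$, and either duality or counting $(d+1)$-subsets gives the factor $1/(d+1)$.

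One small point concerns your first paragraph. The circuit-elimination sketch there does not by itself give a contradiction. It only produces a third circuit-hyperplane $(H_1\cup H_2)\setminus\{e\}$, which again meets $H_1$ in $d-1$ elements, so the argument would just repeat. The closure argument in your last paragraph is the one to keep.

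That closure argument uses only two facts: $H_1\cap H_2$ is a proper subset of the circuit $H_1$, hence independent; and it is a flat properly contained in the hyperplane $H_1$, hence of rank at most $d-2$. Sparse paving is never used. So the intersection bound, and with it $\lambda\le\Lambda_{m,d}$, holds for the circuit-hyperplanes of any matroid of rank $d$ and corank $m\ge1$.
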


The residue estimates below use only the inequality
$\lambda\le\Lambda_{m,d}$, not the existence of a sparse paving matroid with
exactly $\lambda$ circuit-hyperplanes. We therefore allow any real
$\lambda\in[0,\Lambda_{m,d}]$.

By \Cref{lem:h-order}, the residues $h_i-2\lambda/d$ at the nonzero poles
remain strictly ordered; when $m=2$, there is only one such pole. To apply
\Cref{lem:root-count} to $T_{m,d,\lambda}$ when some of these residues are
negative, we must show that the residue at the leftmost pole is positive and
find a point of $(-x_1,0)$ at which the quotient in \eqref{eq:T-rational} is
positive.

\begin{lemma}
\label{lem:left-residue}
Suppose that $d\ge6$ and $m\ge4$. Then
$h_{m-1}>2\Lambda_{m,d}/d$.
\end{lemma}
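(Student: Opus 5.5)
The plan is to bound $h_{m-1}$ from below by the average of the residues and then compare the result with $2\Lambda_{m,d}/d$ through a closed binomial form. By \Cref{lem:residues}, for each $j$ with $2\le j\le m-1$ the numbers $b_{1,j},\dots,b_{m-1,j}$ increase strictly in $i$ and sum to $1$. Hence the largest one exceeds their average:
\[
b_{m-1,j}>\frac1{m-1}.
\]
This range is nonempty because $m\ge4$. For $j=0$ and $j=1$ we use the exact values $b_{m-1,0}=1/m$ and $b_{m-1,1}=1/(m-1)$. As in the proof of \Cref{lem:h-order}, these two terms contribute exactly $1+d$.

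Substituting into the formula for $h_{m-1}$ gives
\[
h_{m-1}>1+d+\frac1{m-1}\sum_{j=2}^{m-1}(m-j)\binom{d+j-1}{j}.
\]
Next I would evaluate the full weighted sum in closed form. Write $m-j$ as the number of indices $k$ with $j\le k\le m-1$ and interchange the order of summation. The hockey-stick identity $\sum_{j=0}^{k}\binom{d+j-1}{j}=\binom{d+k}{k}$, applied twice, then gives
\[
\sum_{j=0}^{m-1}(m-j)\binom{d+j-1}{j}=\sum_{k=0}^{m-1}\binom{d+k}{k}=\binom{m+d}{m-1}.
\]
Removing the $j=0$ and $j=1$ terms, which equal $m$ and $(m-1)d$, we obtain
\[
h_{m-1}>1+d+\frac{\binom{m+d}{m-1}-m-(m-1)d}{m-1}
=\frac{\binom{m+d}{m-1}-1}{m-1}.
\]

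It then remains to compare this with
\[
\frac{2\Lambda_{m,d}}{d}=\frac{2}{d(\max\{m,d\}+1)}\binom{m+d}{d}.
\]
Put $B=\binom{m+d}{m}$ and use $\binom{m+d}{m-1}=\frac{m}{d+1}B$. The target inequality becomes
\[
\frac{mB}{(m-1)(d+1)}-\frac1{m-1}>\frac{2B}{d(\max\{m,d\}+1)}.
\]
Since $\max\{m,d\}+1\ge d+1$, the right side is at most $2B/(d(d+1))$. Multiplying through by $(m-1)(d+1)$, it therefore suffices to show
\[
\Bigl(m-\frac{2(m-1)}{d}\Bigr)B>1.
\]
For $d\ge6$ we have $2(m-1)/d\le(m-1)/3$, so the factor in parentheses is at least $m-(m-1)/3=(2m+1)/3\ge3$. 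Since also $B\ge1$, the inequality holds with room to spare.

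The main point to check carefully is the double hockey-stick evaluation and its boundary terms. The rest is a crude estimate, and there is ample slack in it.
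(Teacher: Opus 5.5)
Up to the bound $h_{m-1}>\bigl(\binom{m+d}{m-1}-1\bigr)/(m-1)$ your argument is the paper's. You use the averaging estimate $b_{m-1,j}>1/(m-1)$ for $j\ge2$, the exact values for $j=0,1$, and the double hockey-stick identity $\sum_{j=0}^{m-1}(m-j)\binom{d+j-1}{j}=\binom{m+d}{m-1}$. Your use of $\Lambda_{m,d}\le B/(d+1)$ is also the paper's. However, the final reduction contains an arithmetic error. Multiplying
\[
\frac{mB}{(m-1)(d+1)}-\frac1{m-1}>\frac{2B}{d(d+1)}
\]
by $(m-1)(d+1)$ turns the term $\frac1{m-1}$ into $d+1$, not $1$. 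What you actually need is
\[
\Bigl(m-\frac{2(m-1)}{d}\Bigr)B>d+1 .
\]
Your justification ``$B\ge1$'' does not give this. For $d=6$ it would only show that the left side is at least $3$, which is less than $7$.

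The repair takes one line. Since $1\le d\le m+d-1$, we have $B=\binom{m+d}{d}\ge m+d$. The left side is then at least $\frac{2m+1}{3}B\ge3(m+d)>d+1$, and with this correction your proof is complete. For comparison, the paper closes the argument with the chain $\frac{C-1}{m-1}>\frac{C}{2(m-1)}>\frac{2C}{dm}\ge\frac{2\Lambda_{m,d}}{d}$, where $C=\binom{m+d}{m-1}$. This chain uses $C>2$, $(d-4)m+4>0$, and $C/m=B/(d+1)$. It is the same crude estimate as yours, organized so that the constant $-1$ is absorbed before any cross-multiplication.
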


\begin{proof}
By \eqref{eq:b-sum} and
$b_{1,j}\le\cdots\le b_{m-1,j}$,
we have
$b_{m-1,j}\ge1/(m-1)$ for $1\le j\le m-1$,
with strict inequality when $j\ge2$. Together with
$b_{m-1,0}=1/m$, we obtain
\( h_{m-1}
>
1+\frac1{m-1}
\sum_{j=1}^{m-1}
(m-j)\binom{d+j-1}{j}.
\)
Put $C=\binom{m+d}{m-1}$. Two applications of the binomial summation identity
yield
\(
\sum_{j=0}^{m-1}
(m-j)\binom{d+j-1}{j}
=
C.
\)
Since $C\ge\binom{m+1}{m-1}=m(m+1)/2>2$,
$dm-4(m-1)=(d-4)m+4>0$, and
\(
\frac Cm
=\frac1{d+1}\binom{m+d}{d}
\ge\Lambda_{m,d},
\)
we obtain
\(
h_{m-1}
>\frac{C-1}{m-1}
>\frac{C}{2(m-1)}
>\frac{2C}{dm}
\ge\frac{2\Lambda_{m,d}}d.
\)
\end{proof}

The following lemma gives a point of $(-x_1,0)$ at which the quotient in
\eqref{eq:T-rational} is positive.

\begin{lemma}
\label{lem:test-point}
Suppose that
\(
d\ge6,
m\ge4.
\)
Set $x_*=-2/(m+1)^2$.
Then
\(
-x_1<x_*<0,
\)
and, for every real $\lambda$ satisfying
$0\le\lambda\le\Lambda_{m,d}$,
\begin{equation}\label{eq:test-positive}
\frac{T_{m,d,\lambda}(x_*)}
{(m-1)!x_*L_{m-1}^{(1)}(-x_*)}
>0.
\end{equation}
\end{lemma}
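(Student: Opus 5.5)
The plan has two parts: first locate $x_*$ relative to the smallest Laguerre zero, then prove \eqref{eq:test-positive} at the extreme value $\lambda=\Lambda_{m,d}$.

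\textbf{Locating $x_*$.} As in the proof of \Cref{lem:residues}, set $t_h=x_h^{-1}$. The coefficient formula there with $k=1$ gives
\[
\sum_{h=1}^{m-1}t_h=e_1(t_1,\ldots,t_{m-1})=\frac{m-1}{2}.
\]
Hence $t_1<(m-1)/2$, so $x_1>2/(m-1)>2/(m+1)^2=-x_*$. This proves $-x_1<x_*<0$. In particular $x_*+x_i>0$ for every $i$, and every $t_i$ satisfies $|x_*t_i|\le (m-1)/(m+1)^2<1$.

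\textbf{Reduction to $\lambda=\Lambda_{m,d}$.} By \eqref{eq:T-rational}, the left side of \eqref{eq:test-positive} equals
\[
\Phi_\lambda(x_*)=\frac1{x_*}+\sum_{i}\frac{h_i}{x_*+x_i}-\frac{2\lambda}{d}\Bigl(1+\sum_i\frac1{x_*+x_i}\Bigr).
\]
This is affine in $\lambda$ with negative slope, because every $x_*+x_i>0$. So it suffices to treat $c:=2\Lambda_{m,d}/d$. Write $\frac1{x_*+x_i}=\frac{t_i}{1+x_*t_i}$ and $\kappa=(m+1)^2/\bigl((m+1)^2-(m-1)\bigr)$, so that $1\le 1/(1+x_*t_i)\le\kappa$. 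Then
\[
\Phi(x_*)\ \ge\ -\frac{(m+1)^2}{2}-c+\sum_{h_i>c}(h_i-c)t_i+\kappa\sum_{h_i<c}(h_i-c)t_i .
\]
This in turn is at least
\[
-\frac{(m+1)^2}{2}-c+\sum_i h_it_i-\kappa c\,\frac{m-1}{2}.
\]

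\textbf{Closed form for $\sum_i h_it_i$.} Expand \eqref{eq:uniform-rational} at $x=0$: the constant term of $\frac{T_{m,d,0}(x)}{(m-1)!xL^{(1)}_{m-1}(-x)}-\frac1x$ is $\sum_i h_it_i$. Since $T_{m,d,0}(0)=N_{m,d}(0)=m!$ and $T_{m,d,0}'(0)=N_{m,d}(1)-N_{m,d}(0)$, this constant term equals
\[
\frac{T_{m,d,0}'(0)}{m!}-\frac{(L^{(1)}_{m-1})'(0)}{m}\cdot(-1)
=\frac{N_{m,d}(1)}{m!}-1+\frac{m-1}{2}.
\]
Moreover $N_{m,d}(1)=\sum_{j}(m-j)!\,j!\binom{m+d}{j}$. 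Keeping the terms $j=m-1$ and $j=m-2$ yields a lower bound of order $\binom{m+d}{m-1}/m$.

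\textbf{Main obstacle.} It remains to show
\[
\frac{N_{m,d}(1)}{m!}+\frac{m-1}{2}-1>\frac{(m+1)^2}{2}+c\Bigl(1+\kappa\frac{m-1}{2}\Bigr).
\]
This is the delicate step, since $c$ is itself of binomial size. I would use $\Lambda_{m,d}\le\binom{m+d}{d}/(\max\{m,d\}+1)$, so that $c\,\frac{m-1}{2}\kappa\lesssim \frac{m}{d(d+1)}\binom{m+d}{m}$. I would then compare this with $\frac{(m-1)!}{m!}\binom{m+d}{m-1}=\frac1{m}\binom{m+d}{m-1}$ plus the $j=m-2$ term, using $d\ge6$ and $m\ge4$. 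If the crude $\kappa$ bound is too weak, I would fall back on the exact residue form. That form uses that only the few smallest indices $i$ can have $h_i<c$ (by the monotonicity in \Cref{lem:h-order}), together with the smallness of $t_i$ for large $i$ to absorb the $\kappa$ factor.
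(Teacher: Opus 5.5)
Your first two steps are correct. The identity $e_1(t_1,\dots,t_{m-1})=(m-1)/2$ from the proof of \Cref{lem:residues} gives $x_1>2/(m-1)>2/(m+1)^2$, which is quicker than the paper's alternating-series argument. The reduction to $\lambda=\Lambda_{m,d}$ via \eqref{eq:T-rational}, and the bounds $1\le 1/(1+x_*t_i)\le\kappa$, are also fine. Your closed form for $\sum_i h_it_i$, however, has a sign error. Since $(L^{(1)}_{m-1})'(0)=-m(m-1)/2$, the constant term is $N_{m,d}(1)/m!-1-(m-1)/2=N_{m,d}(1)/m!-(m+1)/2$. As a check, for $m=2$ one has $h_1=d+1$, $t_1=1/2$ and $N_{2,d}(1)=d+4$.

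The genuine gap is the step you call the main obstacle. It carries the whole quantitative content of the lemma, and you only describe how you would attack it. Worse, the inequality you reduce to is false for small parameters, so it cannot be proved. At $(m,d)=(6,6)$ one has $\Lambda_{6,6}=132$, $c=44$, $\kappa=49/44$ and $N_{6,6}(1)/6!=183.4$. Your right-hand side is then $24.5+44+122.5=191$, while your left-hand side is $184.9$ as written and $179.9$ with the corrected sign. With the corrected sign the inequality also fails at $(4,6)$ ($38.5$ against about $39.55$), at $(5,6)$ and at $(7,6)$.

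Even your sharper intermediate bound, before $\kappa$ is applied to every term, is about $-0.2$ at $(6,6)$, whereas the true value of the quotient there is about $1.2$. Your fallback does not help in this case. Only $i=1$ has $h_i<c$, and $t_1\approx1.62$ is the largest of the $t_i$, so there is no small $t_i$ to absorb $\kappa$; you would need sharp bounds on $t_1$ and $h_1$ themselves.

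The margin really is thin for small $m$ and $d$. This is why the paper checks the eighteen pairs with $4\le m\le 9$, $6\le d\le 8$ by direct computation. For the remaining pairs it proves $T_{m,d,\Lambda_{m,d}}(-z)<0$ from explicit Laguerre bounds, keeping the terms $j=m-1,m-2,m-3$, together with the monotonicity of $\Delta_{m,d}$ in $d$ or in $m$. To complete your route you would need a comparable split: exact evaluation of finitely many small cases, plus a uniform estimate for the rest.
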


\begin{proof}
Set $z=-x_*=2/(m+1)^2$. Let $N,\alpha$ be integers with
$0\le N\le m-1$ and $\alpha\ge1$. For $0<y\le z$ and each integer $i$
with $0\le i<N$,
the ratio of the absolute values of the terms indexed by $i+1$ and $i$
in the defining expansion of $L_N^{(\alpha)}(y)$ is
\(
\frac{(N-i)y}{(i+1)(\alpha+i+1)}
\le\frac{(m-1)z}{2}
=\frac{m-1}{(m+1)^2}<1.
\)
The terms alternate in sign, starting with a positive term, and strictly
decrease in absolute value. Pairing consecutive terms gives
$L_N^{(\alpha)}(y)>0$, with a positive final term left over when $N$ is
even. Since $L_N^{(\alpha)}(0)>0$ as well, we have
$L_N^{(\alpha)}(y)>0$ on $[0,z]$. In particular,
\(
z<x_1,\quad L_{m-1}^{(1)}(z)>0,\quad L_{m-1}^{(2)}(z)>0.
\)
Thus $-x_1<x_*<0$, and the denominator in
\eqref{eq:test-positive} is negative.

By \eqref{eq:T-lambda},
\(
T_{m,d,\lambda}(-z)
=T_{m,d,0}(-z)
+\frac{2\lambda}{d}(m-1)!zL_{m-1}^{(2)}(z).
\)
This expression is increasing in $\lambda$, so it suffices to prove
$T_{m,d,\Lambda_{m,d}}(-z)<0$.

\smallskip
\noindent
\emph{Finite cases: $4\le m\le9$ and $6\le d\le8$.}
Direct calculation gives $T_{m,d,\Lambda_{m,d}}(-z)<0$ for these eighteen pairs.

For all other pairs, either $d>m$ and $d\ge8$, or $m\ge d$ and $m\ge9$.
By \eqref{eq:N-uniform-def}, \eqref{eq:N-def}, and
\Cref{lem:Poisson-Laguerre},
\begin{equation}\label{eq:T-test-exact}
\begin{aligned}
T_{m,d,\Lambda_{m,d}}(-z)
&=m!-z\sum_{j=1}^{m-1}
(m-j)!\binom{m+d}{j}(j-1)!L_{j-1}^{(1)}(z)\\
&\qquad+\frac{2\Lambda_{m,d}}d(m-1)!zL_{m-1}^{(2)}(z).
\end{aligned}
\end{equation}
The alternating expansions also give
\[
\begin{gathered}
L_{m-r}^{(1)}(z)\ge(m-r+1)
\left(1-\frac{m-r}{(m+1)^2}\right)\qquad(r=2,3,4),\\
L_{m-1}^{(2)}(z)\le\binom{m+1}{2}.
\end{gathered}
\]
Keeping only $j=m-1,m-2,m-3$ in the sum in \eqref{eq:T-test-exact}
and applying these bounds, we obtain
\[
T_{m,d,\Lambda_{m,d}}(-z)
\le-z(m-1)!\binom{m+d}{m-1}\Delta_{m,d},
\]
where
\begin{equation}\label{eq:test-sufficient}
\begin{aligned}
\Delta_{m,d}
={}&\frac{m(m+1)(d^2+7d+18)+3d^2+23d+72}
{(m+1)^2(d+2)(d+3)}\\
&-\frac{\min\{m,d\}+1}{d}
-\frac{m(m+1)^2}{2\binom{m+d}{m-1}}.
\end{aligned}
\end{equation}
Thus $\Delta_{m,d}>0$ implies the required estimate.

\smallskip
\noindent
\emph{Case 1: $d>m$ and $d\ge8$.}
Let $F_m(x)$ be the right-hand side of \eqref{eq:test-sufficient}
with the final term omitted and $d$ replaced by a real variable $x>m$.
Differentiation gives
\[
F_m'(x)=\frac{m+1}{x^2}
-\frac{2(m^2+m+4)}{(m+1)^2(x+2)^2}
-\frac{6(m^2+m+5)(2x+5)}{(m+1)^2(x+2)^2(x+3)^2}.
\]
For $m\ge4$ and $x>m$, the differences
$(m+1)^2-(m^2+m+4)=m-3$ and $(m+1)^2-(m^2+m+5)=m-4$
are positive and nonnegative, respectively, while
$(x+3)^2-3(2x+5)=x^2-6>0$. Hence
\[
F_m'(x)>\frac{m+1}{x^2}-\frac4{(x+2)^2}
>\frac{m-3}{x^2}>0.
\]
The last term in \eqref{eq:test-sufficient} is negative, and the ratio of
its value at $d+1$ to its value at $d$ is
$(d+2)/(m+d+1)=1-(m-1)/(m+d+1)<1$.
It therefore increases with $d$, so $\Delta_{m,d}$ is strictly increasing in $d$.
The values of $\Delta_{m,8}$ for $4\le m\le7$ are positive:
\[
\Delta_{4,8}=\frac{3457}{11000},\quad
\Delta_{5,8}=\frac{7277}{25740},\quad
\Delta_{6,8}=\frac{8409}{40040},\quad
\Delta_{7,8}=\frac{667}{5720}.
\]
For $m\ge8$, it suffices to take $d=m+1$. Set
\[
R_m=\frac{2\binom{2m+1}{m-1}}{m^2(m+1)^2}.
\]
Since $R_8=2431/324>1$ and
\[
\frac{R_{m+1}}{R_m}-1
=\frac{3(m+1)(m-2)(m+2)+2m}{(m+3)(m+2)^2}>0
\qquad(m\ge8),
\]
we have $R_m>1$ throughout this range. Consequently,
\[
\Delta_{m,m+1}
=\frac1m\left(1-\frac1{R_m}\right)
+\frac{m^2(2m-3)+55(m-1)+43}{m(m+1)^2(m+3)(m+4)}>0.
\]

\smallskip
\noindent
\emph{Case 2: $m\ge d$ and $m\ge9$.}
Let $G_d(x)$ be the rational function obtained from
\eqref{eq:test-sufficient} by omitting the final term, setting
$\min\{m,d\}=d$, and replacing $m$ by $x$. For $x\ge9$,
\[
G_d'(x)
=\frac{(x-9)(d^2+7d+18)+4(d+3)^2}{(x+1)^3(d+2)(d+3)}>0.
\]
The last term in \eqref{eq:test-sufficient} increases strictly with $m$:
it is negative, and the ratio of its value at $m+1$ to its value at $m$ is
\[
\frac{(m+2)^2}{(m+1)(m+d+1)}
=1-\frac{(d-2)m+d-3}{(m+1)(m+d+1)}<1.
\]
Hence $\Delta_{m,d}$ is strictly increasing in $m$.
The values of $\Delta_{9,d}$ for $6\le d\le8$ are also positive:
\[
\Delta_{9,6}=\frac{433}{57200},\quad
\Delta_{9,7}=\frac{10991}{450450},\quad
\Delta_{9,8}=\frac{63953}{2431000}.
\]
For $d\ge9$, it suffices to take $m=d$. Set
\[
S_d=\frac{4\binom{2d}{d-1}}{d^3(d+1)^2}.
\]
Since $S_9=4862/2025>1$ and
\[
\frac{S_{d+1}}{S_d}-1
=\frac{(d-3)(3d^2+5d+3)+1}{(d+2)^3}>0
\qquad(d\ge9),
\]
we have $S_d>1$ for $d\ge9$, and therefore
\[
\Delta_{d,d}
=\frac2{d^2}\left(1-\frac1{S_d}\right)
+\frac{(d-4)(2d^3+d^2+19d+36)+132}
{d^2(d+1)^2(d+2)(d+3)}>0.
\]

Thus $T_{m,d,\Lambda_{m,d}}(-z)<0$ in every case.
Hence \eqref{eq:test-positive} holds.
\end{proof}

\begin{theorem}
\label{thm:T-real-rooted}
Let $m$ and $d$ be positive integers with $d\ge6$, and let
$\lambda\in[0,\Lambda_{m,d}]$. Then $T_{m,d,\lambda}(x)$ has only real
zeros.
\end{theorem}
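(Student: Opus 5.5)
The proof will apply \Cref{lem:root-count} to the partial fraction expansion \eqref{eq:T-rational}, splitting into cases by $m$ and by whether $\lambda=0$.

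\emph{Small corank.} If $m=1$, then $N_{1,d,\lambda}(y)=1-\frac{2\lambda}{d}y$ is linear. Since $\Tcal$ preserves degree, $T_{1,d,\lambda}$ has degree at most one and is trivially real-rooted.

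\emph{The case $\lambda=0$.} For $m\ge2$ and $\lambda=0$, $T_{m,d,0}$ has degree $m-1$. By \Cref{lem:h-order}, every residue $h_i$ in \eqref{eq:uniform-rational} is positive, as is the residue $1$ at $0$. So the rational function in \eqref{eq:uniform-rational} has $m$ simple poles $-x_{m-1}<\cdots<-x_1<0$, all with positive residues, and it changes sign on each of the $m-1$ gaps between consecutive poles. This produces $m-1$ real zeros, which exhausts the degree.

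\emph{The case $\lambda>0$ and $m\ge2$.} Here I take $r=m$, $a=2\lambda/d>0$, and $c_i=h_i-2\lambda/d$ in \Cref{lem:root-count}. The numerator $F$ is a positive multiple of $T_{m,d,\lambda}$, because $\prod(x+x_i)=(m-1)!L_{m-1}^{(1)}(-x)$. The hypotheses are checked as follows.
\begin{itemize}
\item Strict ordering $c_1<\cdots<c_{m-1}$ follows from \Cref{lem:h-order}.
\item Positivity $c_{m-1}>0$ holds for $m\ge4$ by \Cref{lem:left-residue}, since $\lambda\le\Lambda_{m,d}$.
\item The auxiliary point $x_*$, needed when $c_1<0$, is supplied for $m\ge4$ by \Cref{lem:test-point}.
\item If some $c_i=0$, the hypothesis $c_i\ne0$ fails. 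In that case I perturb $\lambda$ slightly: the set of bad $\lambda$ is finite, and real-rootedness is closed under limits of polynomials of fixed degree $m$ (the leading coefficient $-2\lambda/d$ stays bounded away from $0$). Alternatively, I cancel the common factor and rerun the sign count.
\end{itemize}

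\emph{The cases $m=2,3$.} These require separate verification. For $m=2$ there is one nonzero pole. The bound $\Lambda_{2,d}=\binom{d+2}{2}/(d+1)=(d+2)/2$ gives $2\lambda/d\le(d+2)/d<d+1\le h_1$, so every $c_i$ is positive and no test point is needed. For $m=3$, I show $h_2>2\Lambda_{3,d}/d$ directly. Here $\Lambda_{3,d}=\binom{d+3}{3}/(d+1)$, so $2\Lambda_{3,d}/d=(d+3)(d+2)/(3d)$. Meanwhile, by \Cref{lem:residues} (specifically \eqref{eq:b-sum} and \eqref{eq:b-monotone}), $h_2>1+d+\binom{d+1}{2}/2$, which exceeds that bound for $d\ge6$. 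If $c_1<0$ can occur when $m=3$, I still need a positive value on $(-x_1,0)$. I would try the same point $x_*=-2/(m+1)^2=-1/8$ and evaluate $T_{3,d,\Lambda_{3,d}}(-1/8)$ explicitly as a quadratic-over-rational expression in $d$. Alternatively, I would observe that the two-interval count can be replaced by a parity argument: the total count of sign changes plus the zero in $(0,\infty)$ forces the remaining zeros to be real when $\deg F=3$.

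The main obstacle is ensuring these small-corank and degenerate cases ($m\le3$, or some $c_i=0$) satisfy the root-counting hypotheses uniformly in $d\ge6$. The general case $m\ge4$ is handled directly by \Cref{lem:left-residue} and \Cref{lem:test-point}.
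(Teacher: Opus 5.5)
\textbf{Gap in the case $m=3$.} Your plan matches the paper's proof: the residue count via \Cref{lem:root-count}, the ordering from \Cref{lem:h-order}, \Cref{lem:left-residue,lem:test-point} for $m\ge4$, and a limiting argument when a shifted residue vanishes. The exception is $m=3$, which you leave unresolved. You show $c_2>0$, but you never decide whether $c_1<0$ can occur. Of your two fallbacks, the test-point evaluation at $-1/8$ is a computation you have not carried out, and the parity argument is wrong. If $r=3$ and $c_1<0<c_2$, then $F=T_{3,d,\lambda}$ is positive at $-\infty$, $-x_2$, $-x_1$ and $0$, and negative at $+\infty$. The signs therefore force only the zero in $(0,\infty)$. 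Parity is still consistent with a pair of non-real zeros, which is exactly why \Cref{lem:root-count} needs the auxiliary point $x_*$ when $c_1<0$.

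\textbf{How to close it.} Use the argument you already gave for $m=2$. By \Cref{lem:h-order}, $h_1\ge d+1$. For $d\ge6$,
\[
\frac{2\Lambda_{3,d}}{d}=\frac{(d+2)(d+3)}{3d}<d+1,
\]
since this inequality is equivalent to $d^2-d-3>0$. Hence every $c_i$ is positive when $m=3$. No test point is needed, and your separate lower bound for $h_2$ becomes unnecessary.

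\textbf{How the paper does it.} The paper makes this step uniform in $m$. If $c_1<0$, then $a>h_1\ge d+1$, so $\Lambda_{m,d}\ge ad/2>d^2/2$. This fails for $m\le3$ when $d\ge6$, so the negative-residue case arises only for $m\ge4$. There \Cref{lem:left-residue,lem:test-point} apply.

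\textbf{A minor point.} If the bad value is $\lambda=\Lambda_{m,d}$, approximate it from below. \Cref{lem:test-point} is stated only for $\lambda\le\Lambda_{m,d}$.
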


\begin{proof}
The case $m=1$ follows from
$T_{1,d,\lambda}(x)=1-(2\lambda/d)x$. Assume $m\ge2$.

If $\lambda=0$, the residues in \eqref{eq:uniform-rational} are $1$ at the
origin and $h_i>0$ at $-x_i$. Thus no pole is canceled. Every interval between
consecutive nonzero poles contains a zero, as does $(-x_1,0)$. These are
$m-1$ distinct real zeros. Since the Poisson transform preserves degree and
$N_{m,d}(y)$ has degree $m-1$, they are all the zeros of $T_{m,d,0}(x)$.

Now suppose that $\lambda>0$, and set
\(
a_{\max}=\frac{2\Lambda_{m,d}}d.
\)
For $a\in(0,a_{\max}]$, define
\(
T_a(x)
=T_{m,d,0}(x)-a(m-1)!xL_{m-1}^{(2)}(-x).
\)
Then
\[
\frac{T_a(x)}{(m-1)!xL_{m-1}^{(1)}(-x)}
=-a+\frac1x+\sum_{i=1}^{m-1}\frac{h_i-a}{x+x_i}.
\]
Put $c_i=h_i-a$. The quotient allows \Cref{lem:root-count} to count the
zeros of $T_a$ from its poles. If $c_1>0$, all the shifted residues are
positive. If $c_1<0$, their strict ordering and the positivity of
$c_{m-1}$ give a single change of sign; the point supplied by
\Cref{lem:test-point} compensates for the one interval in which the poles do
not force a zero. A vanishing shifted residue will be handled by continuity.

Assume first that every $c_i$ is nonzero. If $c_1>0$, then every $c_i$ is
positive, and \Cref{lem:root-count}, with $r=m$ and $F=T_a$, shows that
$T_a$ is real-rooted.

Suppose that $c_1<0$. Then $a>h_1\ge d+1$ by \eqref{eq:h-order}, so
$ad/2>d^2/2$. Since $ad/2\le\Lambda_{m,d}$, we have
$\Lambda_{m,d}>d^2/2$. For $m=1,2,3$, respectively,
\(
\Lambda_{m,d}
=1,
\frac{d+2}{2},
\frac{(d+2)(d+3)}6.
\)
The largest of these is less than $d^2/2$, since
\[
\frac{d^2}{2}-\frac{(d+2)(d+3)}6
=\frac{(d-4)(2d+3)+6}{6}>0\qquad(d\ge6).
\]
Thus $m\ge4$, so we may apply \Cref{lem:left-residue,lem:test-point}.

The residues satisfy $c_1<\cdots<c_{m-1}$ by \eqref{eq:h-order}, and
\Cref{lem:left-residue}, together with $a\le a_{\max}$, gives
$c_{m-1}>0$. Moreover,
\Cref{lem:test-point}, applied with the real parameter $ad/2$, gives a point
$x_*\in(-x_1,0)$ such that
\(
\frac{T_a(x_*)}{(m-1)!x_*L_{m-1}^{(1)}(-x_*)}>0.
\)
All the hypotheses of \Cref{lem:root-count}, with $F=T_a$, are therefore
satisfied, so $T_a$ is real-rooted.

Finally, suppose that $a=h_i$ for some $i$. Choose
\(
a_\nu\longrightarrow a,
a_\nu\in(0,a_{\max}]\setminus\{h_1,\ldots,h_{m-1}\}.
\)
If $a=a_{\max}$, choose $a_\nu<a$. The preceding argument shows that every
$T_{a_\nu}$ is real-rooted, and its coefficients converge to those of $T_a$.
Since
\[
[x^m]T_{a_\nu}(x)=-a_\nu\longrightarrow-a\ne0,
\]
the limit has degree $m$. The continuity theorem for zeros of
polynomials~\cite[Section~1.3]{RahmanSchmeisser2002} shows that $T_a$ is
real-rooted. Taking $a=2\lambda/d$ proves the theorem.
\end{proof}

\subsection{Real-rootedness of
\texorpdfstring{$\Gamma_M(x)$ and $Z_M(x)$}{Gamma(M)(x) and Z(M)(x)}}

We first treat the cases $d=4$ and $d=5$ directly.

\begin{lemma}\label{lem:rank45-discriminants}
Let $m\ge1$, $d\in\{4,5\}$, and $0\le\lambda\le\Lambda_{m,d}$.
Then $\Gamma_{m,d,\lambda}(x)$ has only simple real zeros.
\end{lemma}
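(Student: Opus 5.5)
For $d\in\{4,5\}$ we have $\lfloor d/2\rfloor=2$, so $\Gamma_{m,d,\lambda}(x)=1+\beta x+\gamma x^2$ is a quadratic. The plan is to compute $\beta$ and $\gamma$ explicitly from \eqref{eq:Gamma-coefficients} and show that the discriminant $\beta^2-4\gamma$ is strictly positive for every $m\ge1$ and every real $\lambda\in[0,\Lambda_{m,d}]$. This also gives simplicity of the zeros. If $\gamma=0$, the polynomial is linear and nonconstant, since $\beta>0$ by $\gamma$-positivity (or directly), so the claim is trivial.

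The coefficients are explicit. We have
\[
f_{m,d,1}=\frac{d(d-1)}{(m+1)!},\qquad f_{m,d,2}=\frac{d!}{(d-4)!\,2\,(m+2)!}.
\]
From \eqref{eq:N-uniform-def}, using $(1)_j=j!$ and $(2)_j=(j+1)!$,
\[
N_{m,d}(1)=\sum_{j=0}^{m-1}(m-j)!\,j!\binom{m+d}{j},\qquad N_{m,d}(2)=\sum_{j}(m-j)!\,(j+1)!\binom{m+d}{j}.
\]
Rather than using these sums, I would use the closed form obtainable from the Wu--Xie--Zhang expression $S_{m,d}(i)$ in the proof of \Cref{prop:uniform-Gamma-coefficients}, or equivalently \eqref{eq:N-recursion}. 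For the correction terms, \eqref{eq:sparse-gamma-formula} gives linear corrections $-\lambda(d-1)$ and $-\tfrac{2\lambda}{3}\binom{d-1}{1}\binom{d-2}{2}$. Then
\[
\beta=\beta_0-\lambda(d-1),\qquad \gamma=\gamma_0-\lambda c_d,
\]
where $c_4=2$, $c_5=8$, and $\beta_0,\gamma_0$ are the uniform-matroid coefficients. Hence the discriminant $D(\lambda)=(\beta_0-(d-1)\lambda)^2-4\gamma_0+4c_d\lambda$ is a convex quadratic in $\lambda$. Its derivative at $\lambda$ is $-2(d-1)\beta+4c_d$.

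\textbf{Key steps.} (1) Show $D(0)>0$. This is the uniform case; it follows from \Cref{lem:H-simple-roots} combined with \Cref{lem:Euler-one-factor}, or directly. Actually, the cleaner route is: $\Gamma_{m,d}=N_{m,d}(\vartheta)H_{m,d}$, and \Cref{thm:T-real-rooted}'s $\lambda=0$ argument does not need $d\ge6$, so Jensen plus \Cref{thm:finite-multiplier} gives real-rootedness. Still, for a quadratic, I would just verify the inequality $\beta_0^2>4\gamma_0$ explicitly in $m$. (2) Show $D>0$ on $[0,\Lambda_{m,d}]$. Since $\beta$ stays positive on this range by $\gamma$-positivity, $D$ is decreasing only while $\beta>2c_d/(d-1)$. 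So it suffices to check positivity at the vertex, or to prove $D(\lambda)\ge D(0)-2(d-1)\beta_0\lambda+4c_d\lambda>0$ using $\lambda\le\Lambda_{m,d}=\binom{m+d}{d}/(\max\{m,d\}+1)$. (3) Treat small $m$ (say $m\le 5$) by direct evaluation, and large $m$ by asymptotic comparison. The quantities $\beta_0\sim$ and $\gamma_0$ are rational functions times binomials in $m$ of degrees $d$ in $m$. $\Lambda_{m,d}\approx m^{d-1}/d!$ is of lower order than $\beta_0$ relative to its leading term, so $\beta^2$ dominates $4\gamma$.

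The main obstacle is step (2) for general $m$. One must compare $\beta_0^2$, which is of order $m^{2(d-1)}$ up to the factor $1/(m+1)!^2$ scaling, against $4\gamma_0$ uniformly in $m$, with the correction terms included. I would first try the cleanest sufficient inequality: $\beta(\lambda)^2\ge\beta(\Lambda)^2$ and $\gamma(\lambda)\le\gamma_0$. That gives $D(\lambda)\ge\beta(\Lambda_{m,d})^2-4\gamma_0$, and this reduces everything to one explicit inequality in $m$ for each $d\in\{4,5\}$. That inequality is checked by clearing denominators into a polynomial inequality in $m$ with positive coefficients beyond a small threshold, together with a finite check of the remaining $m$. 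If $\beta(\Lambda)$ fails to dominate for small $m$, I would instead use the exact convex-quadratic minimum of $D$ on the interval for those cases.
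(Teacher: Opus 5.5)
Your discriminant route is sound, and it is genuinely different from the paper's. The paper never forms $\beta^2-4\gamma$. Since $\Gamma_{m,d,\lambda}(0)=1$ and the degree is at most two, it suffices to exhibit one point where $\Gamma_{m,d,\lambda}$ is negative. The paper evaluates at $x=-2/(m+1)$ for $d=4$ and at $x=-1/(m+1)$ for $d=5$. At a fixed point the value is \emph{affine} in $\lambda$, with nonnegative slopes $2(3m-1)/(m+1)^2$ and $4(m-1)/(m+1)^2$, so only the endpoint matters. After replacing $\Lambda_{m,d}$ by the cruder uniform bound $\binom{m+d}{d}/(m+1)$, one explicit rational function of $m$, visibly negative for every $m\ge1$, settles each rank with no case split. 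Your $D(\lambda)$ is quadratic in $\lambda$. That is what forces the extra lower bound $D(\lambda)\ge\beta(\Lambda_{m,d})^2-4\gamma_0$ and a finite check; in exchange you need no well-chosen test point.

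Several statements in your plan need correcting.

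First, $\Lambda_{m,d}$ is not of lower order than $\beta_0$. The hockey-stick identity applied to $f_{m,d,1}S_{m,d}(1)=\sum_{h=0}^{m-1}\binom{d+h}{d-2}$ gives $\beta_0=\binom{m+d}{d-1}-d$. For $m\ge d$ you have $(d-1)\Lambda_{m,d}=\frac{d-1}{d}\binom{m+d}{d-1}$, so $\beta(\Lambda_{m,d})=\frac1d\binom{m+d}{d-1}-d$ keeps only a fraction $1/d$ of the leading term. Two consequences follow:
\begin{itemize}
\item Your tangent-line alternative $D(0)+D'(0)\lambda>0$ fails for large $m$.
\item The crude bound holds for large $m$ only because $\beta(\Lambda_{m,d})^2$ has degree $2(d-1)$ in $m$ while $\gamma_0$ has degree $d-1$.
\end{itemize}
Explicitly, $\gamma_0=m(m+1)(m+5)/6$ for $d=4$ and $\gamma_0=m(m+1)(3m^2+31m+86)/24$ for $d=5$. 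With these, $\beta(\Lambda_{m,d})^2>4\gamma_0$ fails exactly for $(m,d)=(4,4),(1,5),(5,5)$. Your fallback does rescue these cases. In each, the vertex of $D$ lies to the right of $\Lambda_{m,d}$, and $D(\Lambda_{m,d})=92,\,28,\,1133$, respectively.

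Second, you cannot take $\beta>0$ on $[0,\Lambda_{m,d}]$ from $\gamma$-positivity. The lemma concerns arbitrary real $\lambda$ in this interval, not only values realized by sparse paving matroids. Check $\beta(\Lambda_{m,d})>0$ directly instead; its minimum over $m\ge1$ and $d\in\{4,5\}$ is $3$, attained at $(m,d)=(1,4)$.
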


\begin{proof}
Since $\Gamma_{m,d,\lambda}(0)=1>0$ and $\Gamma_{m,d,\lambda}(x)$ has
degree at most two, it suffices to find a point where
$\Gamma_{m,d,\lambda}(x)<0$. We take $x=-2/(m+1)$ when $d=4$ and
$x=-1/(m+1)$ when $d=5$.

By \eqref{eq:sparse-gamma-formula}, the values
$\Gamma_{m,4,\lambda}(-2/(m+1))$ and
$\Gamma_{m,5,\lambda}(-1/(m+1))$ are affine functions of $\lambda$,
with coefficients $2(3m-1)/(m+1)^2$ and $4(m-1)/(m+1)^2$,
respectively. Both coefficients are nonnegative, so these values are
nondecreasing in $\lambda$. Thus the bound
\(
\lambda\le\Lambda_{m,d}\le\binom{m+d}{d}/(m+1),
\)
together with \eqref{eq:uniform-coeff} and \eqref{eq:sparse-gamma-formula},
gives
\begin{align*}
\Gamma_{m,4,\lambda}\!\left(-\frac2{m+1}\right)
&\le-\frac{m^2(m+1)(m+5)+6m(m-1)+12}{12(m+1)^2}<0,\\
\Gamma_{m,5,\lambda}\!\left(-\frac1{m+1}\right)
&\le-\frac{(m+2)(m+3)(m+4)\bigl(m(m-1)+10\bigr)+120}
{120(m+1)^2}<0.\qedhere
\end{align*}
\end{proof}

Finally, we are in a position to prove \Cref{thm:main}.

\begin{proof}[Proof of \Cref{thm:main}]
Let $d=\operatorname{rk}M$ and $m=|E(M)|-d$. If $d=0$, then
$\Gamma_M(x)=Z_M(x)=1$. If $m=0$, then $M$ is the free matroid,
$\Gamma_M(x)=1$, and $Z_M(x)=(1+x)^d$.
Assume that $m,d\ge1$, and let $\lambda$ be the number of circuit-hyperplanes
of $M$. Then $\Gamma_M(x)=\Gamma_{m,d,\lambda}(x)$ by the formula cited after
\eqref{eq:sparse-gamma-formula}.

Suppose first that $d\ge6$.
By \Cref{prop:packing}, $0\le\lambda\le\Lambda_{m,d}$.
The Poisson identity~\eqref{eq:Poisson-egf} reads
\(
\sum_{\ell\ge0}
N_{m,d,\lambda}(\ell)\frac{x^\ell}{\ell!}
=
e^xT_{m,d,\lambda}(x).
\)
Put $q=\lfloor d/2\rfloor$. The polynomial $T_{m,d,\lambda}(x)$ is
real-rooted by \Cref{thm:T-real-rooted}, and
$T_{m,d,\lambda}(0)=N_{m,d,\lambda}(0)=m!>0$.
Apply \Cref{thm:Jensen} to the displayed identity with $b=1$,
$P=T_{m,d,\lambda}$, and $\gamma_\ell=N_{m,d,\lambda}(\ell)$. The resulting
Jensen polynomial $\Jcal_q[N_{m,d,\lambda}](x)$ has constant term
$N_{m,d,\lambda}(0)=m!$, so it is nonzero and therefore real-rooted.

The $\gamma$-positivity theorem of Ferroni, Nasr, and
Vecchi~\cite[Theorem~5.16]{FNV2023} gives
\[[x^i]\Gamma_{m,d,\lambda}(x)\ge0\] for $0\le i\le q$. Since
$f_{m,d,i}>0$, the coefficient identity~\eqref{eq:Gamma-coefficients} yields
$N_{m,d,\lambda}(i)\ge0$ throughout this range.
Consequently, for $x\ge0$,
\[\Jcal_q[N_{m,d,\lambda}](x)\ge N_{m,d,\lambda}(0)=m!>0,\]
so all its zeros are negative.

By the criterion in \Cref{thm:finite-multiplier}, the operator
$N_{m,d,\lambda}(\vartheta)$ preserves real-rootedness for polynomials of
degree at most $q$. By \Cref{lem:H-simple-roots}, the polynomial $H_{m,d}(x)$
is real-rooted of degree $q$. Hence
$N_{m,d,\lambda}(\vartheta)H_{m,d}(x)$ is real-rooted, and
\eqref{eq:Gamma-Euler} identifies this polynomial with
$\Gamma_{m,d,\lambda}(x)$. Its coefficients are nonnegative and its constant
term is $1$, so all its zeros are strictly negative. By
\cite[Proposition~5.3]{FNV2023}, $Z_M(x)$ also has only negative real zeros.

It remains to consider $d\le5$. If $d\le3$, then
$\Gamma_{m,d,\lambda}(x)$ has degree at most one. For $d\in\{4,5\}$,
\Cref{prop:packing} gives $0\le\lambda\le\Lambda_{m,d}$, and
\Cref{lem:rank45-discriminants} shows that $\Gamma_{m,d,\lambda}(x)$ is
real-rooted. In all these cases, $\gamma$-positivity gives nonnegative
coefficients, and the
constant term is $1$, so every zero is negative. The equivalence of Ferroni,
Nasr, and Vecchi~\cite[Proposition~5.3]{FNV2023} now gives the same conclusion
for $Z_M(x)$.
\end{proof}

\section{Strict interlacing for uniform matroids}
\label{sec:uniform-interlacing}

Since uniform matroids are sparse paving, \Cref{thm:main} already shows that
every $\Gamma_{m,d}(x)$ has only negative real zeros. We prove the three
strict interlacing relations stated in
\Cref{thm:uniform-gamma-all-interlacings}. The comparison in which the corank
is fixed will then imply the corresponding strict interlacing relation for
the $Z$-polynomials in \Cref{thm:uniform-Z-strict-interlacing}.

For uniform matroids, \eqref{eq:Gamma-Euler} becomes
\[
\Gamma_{m,d}(x)=N_{m,d}(\vartheta)H_{m,d}(x).
\]
We first locate the zeros of $N_{m,d}(y)$. The three interlacing
comparisons then follow by combining these zero locations with
\Cref{lem:H-recursions,lem:H-simple-roots,lem:common-H-interlacing}.

\subsection{Zeros of \texorpdfstring{$N_{m,d}(y)$}{N(m,d)(y)}}

The interlacing criteria require the zeros of the polynomials defining the
Euler operators to occur in alternating order. To compare the zeros of
$N_{m,d}(y)$ as $m$ and $d$ vary, we label each zero by the interval
$(-j,-j+1)$ that contains it.
We first compare $N_{m,d}(y)$ with $N_{m,d+1}(y)$; the recursion
\eqref{eq:N-recursion} then gives the comparison with $N_{m+1,d}(y)$.

\begin{lemma}
\label{lem:N-uniform-boxes}
Let $m,d\ge1$. If $m=1$, then $N_{1,d}(y)=1$. If $m\ge2$, there are
unique zeros $\rho_{m,d,j}\in(-j,-j+1)$, $1\le j\le m-1$, and
\begin{equation}\label{eq:N-uniform-factorization}
N_{m,d}(y)
=m!\prod_{j=1}^{m-1}\left(1-\frac{y}{\rho_{m,d,j}}\right).
\end{equation}
Moreover,
\begin{equation}\label{eq:N-rank-zero-order-rho}
-j<\rho_{m,d,j}<\rho_{m,d+1,j}<-j+1
\qquad(1\le j\le m-1).
\end{equation}
\end{lemma}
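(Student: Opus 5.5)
The plan is to locate the zeros through sign changes at the integer points $y=0,-1,\dots,-(m-1)$, and then compare $d$ with $d+1$ through a monotonicity argument at each zero. The key input is the evaluation already obtained in the proof of \Cref{prop:N-R-expansion}:
\[
N_{m,d}(-k)=(-1)^k(m-k)!\,(d)_k\qquad(0\le k\le m-1).
\]
All of these values are nonzero, and their signs alternate. Hence $N_{m,d}$ changes sign on each interval $(-j,-j+1)$ with $1\le j\le m-1$, and so has a zero $\rho_{m,d,j}$ there. Since $\deg N_{m,d}=m-1$ (the top term is $\binom{m+d}{m-1}(y)_{m-1}$), these $m-1$ zeros are all the zeros, each is simple, and each interval contains exactly one. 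This gives uniqueness. The factorization \eqref{eq:N-uniform-factorization} follows because none of the zeros is $0$ and $N_{m,d}(0)=m!$. The case $m=1$ is immediate from the definition.

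For the comparison \eqref{eq:N-rank-zero-order-rho}, I would use the $R_j$-expansion of \Cref{prop:N-R-expansion}. Dividing it by $(y)_m$ gives the partial fraction
\[
\frac{N_{m,d}(y)}{(y)_m}=\sum_{k=0}^{m-1}\frac{(m-k)(d)_k/k!}{y+k}.
\]
Put $w_k(d)=(m-k)(d)_k/k!>0$. A zero $\rho=\rho_{m,d,j}\in(-j,-j+1)$ satisfies $\sum_k w_k(d)/(\rho+k)=0$. The plan is to show that
\[
S_{d+1}(\rho):=\sum_k\frac{w_k(d+1)}{\rho+k}
\]
has the sign that forces the zero for $d+1$ to lie to the right of $\rho$. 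On $(-j,-j+1)$ the function $S_{d+1}$ is strictly decreasing, since each term $w_k/(y+k)$ has negative derivative. It runs from $+\infty$ at $(-j)^+$ to $-\infty$ at $(-j+1)^-$. Therefore $\rho_{m,d+1,j}>\rho$ if and only if $S_{d+1}(\rho)>0$.

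To obtain this sign, I would write $w_k(d+1)=w_k(d)\,\frac{d+k}{d}$ and subtract the vanishing sum $S_d(\rho)=0$. This gives
\[
S_{d+1}(\rho)=\frac1d\sum_k w_k(d)\,\frac{k}{\rho+k}.
\]
Next I would use the identity $\frac{k}{\rho+k}=1-\frac{\rho}{\rho+k}$ together with $S_d(\rho)=0$. This reduces the sum to
\[
S_{d+1}(\rho)=\frac1d\sum_k w_k(d)>0.
\]
The equation $S_d(\rho)=0$ thus serves twice: once to remove the change in weights, and once to remove the $\rho/(\rho+k)$ part. The steps are therefore the sign-alternation count, the partial fraction, the weight ratio, and this final cancellation. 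The main point I expect to need care is confirming the direction of monotonicity and the boundary limits, because the $k=j$ term dominates at the left end and the $k=j-1$ term at the right end. With those checked, the strict inequality $\rho_{m,d,j}<\rho_{m,d+1,j}$ follows, and the outer bounds $-j$ and $-j+1$ come from the first part.
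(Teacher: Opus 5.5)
Your proposal is correct and follows essentially the paper's proof: the paper uses the same partial fraction coming from \Cref{prop:N-R-expansion} (written in the reflected variable as $\Phi_{m,d}(y)=\sum_j a_{d,j}/(y-j)$), the same weight ratio $(d+j)/d$, and the same double use of the vanishing sum at the old zero to evaluate the new function there as a fixed-sign multiple of $\sum_j a_{d,j}$. The only minor difference is that you get existence and uniqueness of the zeros from the sign alternation of $N_{m,d}(-k)$ together with a degree count, whereas the paper reads them off from the strict monotonicity of $\Phi_{m,d}$ between consecutive poles.
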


\begin{proof}
The case $m=1$ follows from \eqref{eq:N-uniform-def}. Assume $m\ge2$. For
$0\le j\le m-1$, the calculation in the proof of
\Cref{prop:N-R-expansion} gives
$N_{m,d}(-j)=(-1)^j(m-j)!(d)_j$.
Set
\[
\Pi_m(y)=\prod_{j=0}^{m-1}(y-j),
\quad
a_{d,j}=\frac{(m-j)(d)_j}{j!}>0\quad(0\le j\le m-1),
\]
and
\[
\Phi_{m,d}(y)=\sum_{j=0}^{m-1}\frac{a_{d,j}}{y-j}.
\]
At $y=j$, the removable value of
$(-1)^{m-1}\Pi_m(y)\Phi_{m,d}(y)$ is
\[
(-1)^j(m-j)!(d)_j=N_{m,d}(-j).
\]
Lagrange interpolation at $0,1,\ldots,m-1$ yields
\[
N_{m,d}(-y)
=(-1)^{m-1}\Pi_m(y)\Phi_{m,d}(y).
\]

On each interval $(q-1,q)$, where $1\le q\le m-1$,
\(
\Phi_{m,d}'(y)
=-\sum_{j=0}^{m-1}\frac{a_{d,j}}{(y-j)^2}<0,
\)
and $\Phi_{m,d}$ decreases from $+\infty$ to $-\infty$. It therefore has
a unique zero $\beta_{m,d,q}\in(q-1,q)$. These give all the zeros
$\rho_{m,d,q}=-\beta_{m,d,q}$ of $N_{m,d}(y)$, and
\eqref{eq:N-uniform-factorization} follows from $N_{m,d}(0)=m!$.

For the dependence on $d$, note that
\[
a_{d+1,j}
=a_{d,j}\frac{d+j}{d}
=a_{d,j}+\frac jd a_{d,j}.
\]
Set $y=\beta_{m,d,q}$. Since $\Phi_{m,d}(y)=0$, we have
\begin{align*}
\Phi_{m,d+1}(y)
&=\Phi_{m,d}(y)
+\frac1d\sum_{j=0}^{m-1}\frac{ja_{d,j}}{y-j}\\
&=\frac1d\left(y\Phi_{m,d}(y)-\sum_{j=0}^{m-1}a_{d,j}\right)\\
&=-\frac1d\sum_{j=0}^{m-1}a_{d,j}<0.
\end{align*}
The second equality uses $j/(y-j)=y/(y-j)-1$. Since $\Phi_{m,d+1}$ is
strictly decreasing on $(q-1,q)$, its unique zero there lies to the left of
$\beta_{m,d,q}$. Negating this inequality gives
\eqref{eq:N-rank-zero-order-rho}.
\end{proof}

The same argument, using \eqref{eq:N-recursion}, gives the analogous
comparison in the corank parameter.

\begin{lemma}\label{lem:N-corank-zero-order}
For $m\ge2$, $d\ge1$, and $1\le j\le m-1$, we have
\begin{equation}\label{eq:N-corank-zero-order-rho}
\rho_{m,d,j}<\rho_{m+1,d,j}.
\end{equation}
\end{lemma}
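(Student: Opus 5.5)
We use the recursion \eqref{eq:N-recursion},
\[
N_{m+1,d}(y)=(y+m+1)N_{m,d}(y)+\binom{m+d}{m}(y)_m,
\]
and evaluate it at the zeros of $N_{m,d}$. By \Cref{lem:N-uniform-boxes}, $N_{m+1,d}$ has a unique zero $\rho_{m+1,d,j}$ in each interval $(-j,-j+1)$, $1\le j\le m$. Its sign pattern is fixed by the interpolation values $N_{m+1,d}(-k)=(-1)^k(m+1-k)!(d)_k$, which alternate in sign at the integers $0,-1,\ldots,-m$. So on $(-j,-j+1)$ the polynomial $N_{m+1,d}$ has sign $(-1)^{j-1}$ to the right of $\rho_{m+1,d,j}$ and sign $(-1)^j$ to the left. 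Hence it suffices, for $1\le j\le m-1$, to show that $N_{m+1,d}(\rho_{m,d,j})$ has sign $(-1)^j$. That places $\rho_{m,d,j}$ strictly to the left of $\rho_{m+1,d,j}$ in the common interval $(-j,-j+1)$.

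At $y=\rho:=\rho_{m,d,j}$ the first term of the recursion vanishes, so
\[
N_{m+1,d}(\rho)=\binom{m+d}{m}(\rho)_m.
\]
We then only need the sign of $(\rho)_m=\rho(\rho+1)\cdots(\rho+m-1)$ for $\rho\in(-j,-j+1)$. The factors $\rho,\rho+1,\ldots,\rho+j-1$ are negative, and the factors $\rho+j,\ldots,\rho+m-1$ are positive, because $j\le m-1$ and $\rho+j>0$. There are exactly $j$ negative factors, so $(\rho)_m$ has sign $(-1)^j$, and $(\rho)_m\ne 0$. This gives the required sign.

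To check the sign convention, the step to get right is the sign of $N_{m+1,d}$ immediately to the right of $-j$, i.e.\ on $(-j,\rho_{m+1,d,j})$. The value $N_{m+1,d}(-j)$ has sign $(-1)^j$, and since $N_{m+1,d}$ has no zero in $(-j,\rho_{m+1,d,j})$, it keeps sign $(-1)^j$ there. Likewise $N_{m+1,d}(-j+1)$ has sign $(-1)^{j-1}$, so $N_{m+1,d}$ has sign $(-1)^{j-1}$ on $(\rho_{m+1,d,j},-j+1)$. Since $N_{m+1,d}(\rho)$ is nonzero with sign $(-1)^j$, the point $\rho$ lies in $(-j,\rho_{m+1,d,j})$, which gives $\rho_{m,d,j}<\rho_{m+1,d,j}$. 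No estimate is needed, and this sign bookkeeping is the only real check.
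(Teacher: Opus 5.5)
Your proof is correct and follows the paper's argument. Both evaluate the corank recursion \eqref{eq:N-recursion} at $\rho=\rho_{m,d,j}$ to get $N_{m+1,d}(\rho)=\binom{m+d}{m}(\rho)_m$, whose sign is $(-1)^j$, and then compare with the sign of $N_{m+1,d}$ at the endpoints of $(-j,-j+1)$, using the uniqueness of its zero there from \Cref{lem:N-uniform-boxes}. The paper needs only the endpoint $-j+1$ together with that uniqueness, while you check both endpoints; this is the same bookkeeping.
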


\begin{proof}
Let $\rho=\rho_{m,d,j}$.  By \eqref{eq:N-recursion}, we get
\(
N_{m+1,d}(\rho)
=
\binom{m+d}{m}(\rho)_m.
\)
Since $\rho\in(-j,-j+1)$, exactly $j$ factors in
\(
(\rho)_m=\rho(\rho+1)\cdots(\rho+m-1)
\)
are negative.  Hence
\(
\operatorname{sgn}N_{m+1,d}(\rho)=(-1)^j.
\)
On the other hand, direct substitution gives
\(
N_{m+1,d}(-j+1)
=
(-1)^{j-1}(m-j+2)!(d)_{j-1},
\)
which has the opposite sign.  By \Cref{lem:N-uniform-boxes},
$N_{m+1,d}$ has exactly one zero in $(-j,-j+1)$, so this zero lies
strictly between $\rho$ and $-j+1$.  This proves \eqref{eq:N-corank-zero-order-rho}.
\end{proof}

\subsection{Strict interlacing of
\texorpdfstring{$\Gamma_{m,d}(x)$ and $\Gamma_{m,d+1}(x)$}
{Gamma(m,d)(x) and Gamma(m,d+1)(x)}}

We are now ready to prove \Cref{thm:uniform-gamma-all-interlacings}(i),
where the corank is fixed and the rank increases by one. This comparison uses the
rank recursion.

\begin{proof}[Proof of \Cref{thm:uniform-gamma-all-interlacings}(i)]
If $d=1$, then
\(
\Gamma_{m,1}(x)=1,
\Gamma_{m,2}(x)=1+mx.
\)
These polynomials are linearly independent, and the first has no zeros, so
they strictly interlace by our convention. We may therefore assume that
$d\ge2$.

Set
\(
q=\left\lfloor\frac{d+1}{2}\right\rfloor,
H(x)=H_{m,d+1}(x),
\)
and
\[
A(y)
=
\left(1-\frac{2y}{d+1}\right)N_{m,d}(y),
\qquad
B(y)=N_{m,d+1}(y).
\]
By \eqref{eq:H-rank-recursion} and \eqref{eq:Gamma-Euler},
\(
\Gamma_{m,d}(x)=A(\vartheta)H(x),
\Gamma_{m,d+1}(x)=B(\vartheta)H(x).
\)

If $m=1$, then $B(y)=1$ and
$-A(y)=2y/(d+1)-1$ have positive leading coefficients and strictly interlace
by our convention. If $m\ge2$, the zeros of $A$ are
\(
\rho_{m,d,m-1},\ldots,\rho_{m,d,1},
\frac{d+1}{2},
\)
and those of $B$ are
\(
\rho_{m,d+1,m-1},\ldots,\rho_{m,d+1,1}.
\)
By \eqref{eq:N-rank-zero-order-rho} and the locations of these zeros,
\(
\rho_{m,d,j}
<
\rho_{m,d+1,j}
<
\rho_{m,d,j-1}
\quad
(2\le j\le m-1),
\)
and
\(
\rho_{m,d,1}
<
\rho_{m,d+1,1}
<
\frac{d+1}{2}.
\)
Thus $A$ and $B$ have simple, strictly interlacing real zeros for every
$m\ge1$.

For $e\in\{d,d+1\}$, the coefficient of $x^{\lfloor e/2\rfloor}$ in
$\Gamma_{m,e}(x)$ is positive. Indeed, $e\ge2$ and
$N_{m,e}(\lfloor e/2\rfloor)\ge m!>0$, so \eqref{eq:uniform-coeff} gives
\(
\deg\Gamma_{m,e}(x)=\left\lfloor\frac e2\right\rfloor
\quad(e=d,d+1).
\)
Thus $A(\vartheta)H(x)=\Gamma_{m,d}(x)$ and
$B(\vartheta)H(x)=\Gamma_{m,d+1}(x)$ are nonconstant, and the description of the
zeros of $B$ above shows that $B$ has no zero in $[0,q]$.
By \Cref{lem:common-H-interlacing}, $\Gamma_{m,d}(x)$ and
$\Gamma_{m,d+1}(x)$ therefore strictly interlace.

If $d$ is odd, their degrees differ by one and the
interlacing convention fixes the stated direction. We may therefore assume
that $d$ is even, when their degrees are equal. Pascal's identity gives
\(
N_{m,d+1}(1)-N_{m,d}(1)
=
\sum_{j=1}^{m-1}
(m-j)!\binom{m+d}{j-1}(1)_j
\ge0.
\)
Since $N_{m,d}(1)\ge m!$, we obtain
\[
B(1)-A(1)
=N_{m,d+1}(1)-N_{m,d}(1)+\frac2{d+1}N_{m,d}(1)
\ge\frac{2m!}{d+1}>0.
\]
Since $[x]H(x)=d(d+1)/(m+1)!>0$, it follows that
\(
\Gamma_{m,d}'(0)<\Gamma_{m,d+1}'(0).
\)
\Cref{lem:interlacing-direction} fixes the order as stated.
\end{proof}

We now use \Cref{thm:uniform-gamma-all-interlacings}(i) to prove
\Cref{thm:uniform-Z-strict-interlacing}.

\begin{proof}[Proof of \Cref{thm:uniform-Z-strict-interlacing}]

Let $m,d\ge1$. If $d=1$, then
\(
Z_{U_{m,1}}(x)=1+x,
Z_{U_{m,2}}(x)=x^2+(m+2)x+1.
\)
The quadratic is positive at $0$ and for all sufficiently negative $x$, while
its value at $-1$ is $-m<0$. It therefore has one zero in $(-\infty,-1)$ and
one in $(-1,0)$, and hence
$Z_{U_{m,1}}(x)\prec Z_{U_{m,2}}(x)$. We may therefore assume that $d\ge2$.

By \Cref{thm:uniform-gamma-all-interlacings}(i),
\(
\Gamma_{m,d}(x)\prec\Gamma_{m,d+1}(x).
\)
The coefficient formula \eqref{eq:uniform-coeff} shows that their degrees are
$\lfloor d/2\rfloor$ and $\lfloor(d+1)/2\rfloor$, respectively. Applying
\Cref{prop:transformation-preserves-interlacing} with
$G_d(x)=\Gamma_{m,d}(x)$ and
$G_{d+1}(x)=\Gamma_{m,d+1}(x)$ gives
\(
Z_{U_{m,d}}(x)\prec Z_{U_{m,d+1}}(x),
\)
as required.
\end{proof}

\subsection{Strict interlacing of
\texorpdfstring{$\Gamma_{m,d}(x)$ and $\Gamma_{m+1,d}(x)$}
{Gamma(m,d)(x) and Gamma(m+1,d)(x)}}

We next prove \Cref{thm:uniform-gamma-all-interlacings}(ii), in which the
rank is fixed and the corank increases by one. Its proof uses the corank
recursion instead of the rank recursion.

\begin{proof}[Proof of \Cref{thm:uniform-gamma-all-interlacings}(ii)]
Set
\[
q=\left\lfloor\frac d2\right\rfloor,
\qquad H(x)=H_{m+1,d}(x),
\]
together with
\(
A(y)=(y+m+1)N_{m,d}(y),
B(y)=N_{m+1,d}(y).
\)
By \eqref{eq:H-corank-recursion} and \eqref{eq:Gamma-Euler},
\(
\Gamma_{m,d}(x)=A(\vartheta)H(x),
\Gamma_{m+1,d}(x)=B(\vartheta)H(x).
\)

For $m\ge2$, \Cref{lem:N-corank-zero-order} and the fact that each zero
lies in its interval give
\(
-(m+1)
<
\rho_{m+1,d,m}
<
\rho_{m,d,m-1}
<
\rho_{m+1,d,m-1}
<\cdots <
\rho_{m,d,1}
<
\rho_{m+1,d,1}.
\)
When $m=1$, the zero of $A$ is $-2$, while the zero of $B=N_{2,d}$
lies in $(-1,0)$. Thus $A$ and $B$ are nonconstant and have simple strictly
interlacing negative zeros. In particular, $B$ has no zero in $[0,q]$. By
\eqref{eq:uniform-coeff}, both
$A(\vartheta)H(x)=\Gamma_{m,d}(x)$ and
$B(\vartheta)H(x)=\Gamma_{m+1,d}(x)$ have degree $q\ge1$ and are therefore
nonconstant. By
\Cref{lem:common-H-interlacing}, they strictly interlace.

Moreover, \eqref{eq:N-recursion} gives
\(
B(1)-A(1)=\binom{m+d}{m}(1)_m
=m!\binom{m+d}{m}>0.
\)
It follows from $[x]H(x)=d(d-1)/(m+2)!>0$ that
\(
\Gamma_{m,d}'(0)<\Gamma_{m+1,d}'(0).
\)
The two $\gamma$-polynomials have the same degree, so
\Cref{lem:interlacing-direction} gives
\(
\Gamma_{m,d}(x)\prec\Gamma_{m+1,d}(x).
\)
\end{proof}

The hypothesis $d\ge2$ is necessary because both $\gamma$-polynomials are
equal to $1$ in rank one and hence do not strictly interlace under our
convention.

The comparison in which the rank is fixed does not give strict interlacing
of the corresponding $Z$-polynomials on the entire negative axis. Indeed,
$Z_{U_{m,d}}(x)$ and $Z_{U_{m+1,d}}(x)$ are palindromic of the same degree
$d$. Inversion $x\mapsto1/x$ preserves each of their zero sets and reverses
the order of the zeros. Two disjoint zero sets of the same cardinality
therefore cannot alternate on the entire negative axis.

Nevertheless, within each of the intervals $(-\infty,-1)$ and $(-1,0)$, the
zeros of $Z_{U_{m,d}}(x)$ and $Z_{U_{m+1,d}}(x)$ alternate strictly. To see
this, consider the map
\(
x\longmapsto\frac{x}{(1+x)^2}.
\)
It is a bijection from either interval onto $(-\infty,0)$. It reverses order
on $(-\infty,-1)$ and preserves order on $(-1,0)$. By
\eqref{eq:intro-gamma-transform}, the
zeros of each $Z$-polynomial other than $-1$ are exactly the preimages of the
zeros of the corresponding $\gamma$-polynomial. Therefore
\Cref{thm:uniform-gamma-all-interlacings}(ii) gives the asserted
alternation on both intervals. The ordering on $(-\infty,-1)$ is the reverse
of the ordering on $(-1,0)$. When $d$ is odd, both $Z$-polynomials also vanish
at $-1$.

\subsection{Strict interlacing of
\texorpdfstring{$\Gamma_{m,d}(x)$ and $\Gamma_{m+1,d+1}(x)$}
{Gamma(m,d)(x) and Gamma(m+1,d+1)(x)}}

We finally prove \Cref{thm:uniform-gamma-all-interlacings}(iii), in which
both the rank and corank increase by one. The proof combines the two
recursions used separately in the preceding subsections.

\begin{proof}[Proof of \Cref{thm:uniform-gamma-all-interlacings}(iii)]
If $d=1$, then
\(
\Gamma_{m,1}(x)=1,
\Gamma_{m+1,2}(x)=1+(m+1)x.
\)
These polynomials are linearly independent, and the first has no zeros, so
$\Gamma_{m,1}(x)\prec\Gamma_{m+1,2}(x)$ by our interlacing convention. We may
therefore assume that $d\ge2$.

Set
\[
\begin{gathered}
q=\left\lfloor\frac{d+1}{2}\right\rfloor,
\qquad
H(x)=H_{m+1,d+1}(x),
\\[1mm]
A(y)
=
(y+m+1)\left(1-\frac{2y}{d+1}\right)N_{m,d}(y),
\qquad
B(y)=N_{m+1,d+1}(y).
\end{gathered}
\]
By \eqref{eq:H-rank-recursion}, \eqref{eq:H-corank-recursion}, and
\eqref{eq:Gamma-Euler},
\(
\Gamma_{m,d}(x)=A(\vartheta)H(x),
\Gamma_{m+1,d+1}(x)=B(\vartheta)H(x).
\)

For $m\ge2$, \eqref{eq:N-rank-zero-order-rho} and
\eqref{eq:N-corank-zero-order-rho} give
\(
\rho_{m,d,j}
<
\rho_{m,d+1,j}
<
\rho_{m+1,d+1,j}
\quad
(1\le j\le m-1).
\)
Together with the fact that each zero lies in $(-j,-j+1)$, this yields
\(
-(m+1)
<
\rho_{m+1,d+1,m}
<
\rho_{m,d,m-1}
<
\rho_{m+1,d+1,m-1}
<\cdots <
\rho_{m,d,1}
<
\rho_{m+1,d+1,1}
<
\frac{d+1}{2}.
\)
For $m=1$, the zeros occur in the order
\(
-2<\rho_{2,d+1,1}<\frac{d+1}{2}.
\)
Thus the nonconstant polynomials $A$ and $B$ have simple strictly
interlacing real zeros. The
polynomial $B$ has no zero in $[0,q]$. By \eqref{eq:uniform-coeff},
$A(\vartheta)H(x)=\Gamma_{m,d}(x)$ and
$B(\vartheta)H(x)=\Gamma_{m+1,d+1}(x)$ have positive degrees
$\lfloor d/2\rfloor$ and $\lfloor(d+1)/2\rfloor$, respectively. Hence
\Cref{lem:common-H-interlacing} gives strict interlacing of
$\Gamma_{m,d}(x)$ and $\Gamma_{m+1,d+1}(x)$.

If $d$ is odd, their degrees differ by one, so the
direction is
fixed by the interlacing convention.  Suppose that $d$ is even.  Then the
degrees are both $d/2$, and Pascal's identity gives
\(
N_{m,d+1}(1)-N_{m,d}(1)
=
\sum_{j=1}^{m-1}
(m-j)!\binom{m+d}{j-1}(1)_j
\ge0.
\)
Using \eqref{eq:N-recursion} and $N_{m,d}(1)\ge m!$, we obtain
\[
B(1)-A(1)
\ge m!\left(\frac{2(m+2)}{d+1}+\binom{m+d+1}{m}\right)>0.
\]
Together with $[x]H(x)=d(d+1)/(m+2)!>0$, this gives
\(
\Gamma_{m,d}'(0)<\Gamma_{m+1,d+1}'(0).
\)
The stated order follows from \Cref{lem:interlacing-direction}.
\end{proof}

\begin{remark}
The comparison along the other diagonal,
$\Gamma_{m+1,d}(x)\preccurlyeq\Gamma_{m,d+1}(x)$, need not hold. For example,
\(
\Gamma_{2,3}(x)=1+7x,
\Gamma_{1,4}(x)=1+6x+2x^2.
\)
The zero $-1/7$ of $\Gamma_{2,3}(x)$ lies to the right of both zeros of
$\Gamma_{1,4}(x)$, so
\(
\Gamma_{2,3}(x)\not\preccurlyeq\Gamma_{1,4}(x).
\)
\end{remark}

\section*{Acknowledgements}
All authors contributed equally and the names of the authors are listed in
alphabetical order according to their family names. Matthew H.~Y.~Xie was
supported by the National Natural Science Foundation of China (Grant
No.~12271403). Philip B.~Zhang was supported by the National Natural Science
Foundation of China (Grant No.~12171362) and Natural Science Foundation of
Tianjin Municipality (Grant No.~25JCYBJC00430).

\section*{Declaration of Generative AI}
During the preparation of this manuscript, the authors used generative AI
tools to assist in exploring possible approaches, checking technical details,
and improving the exposition. All mathematical arguments, computations,
proofs, and results were independently verified by the authors. The authors
take full responsibility for the content of this manuscript.

\end{document}